\DeclareSymbolFont{cyrletters}{OT2}{wncyr}{m}{n}
\DeclareMathSymbol{\Sha}{\mathalpha}{cyrletters}{"58}
\let\Re\undefined
\let\Im\undefined
\DeclareMathOperator{\Re}{Re}
\DeclareMathOperator{\Im}{Im}
\DeclareMathOperator{\Gal}{Gal}
\DeclareMathOperator{\vol}{vol}
\begin{document}

    \theoremstyle{plain}
    \newtheorem{thm}{Theorem} \newtheorem{cor}[thm]{Corollary}
    \newtheorem{thmy}{Theorem}
    \renewcommand{\thethmy}{\Alph{thmy}}
    \newenvironment{thmx}{\stepcounter{thm}\begin{thmy}}{\end{thmy}}
    \newtheorem{lemma}[thm]{Lemma}  \newtheorem{prop}[thm]{Proposition}
    \newtheorem{conj}[thm]{Conjecture}  \newtheorem{fact}[thm]{Fact}
    \newtheorem{claim}[thm]{Claim}
    \theoremstyle{definition}
    \newtheorem{defn}[thm]{Definition}
    \newtheorem{example}[thm]{Example}
    \newtheorem{exercise}[thm]{Exercise}
    \theoremstyle{remark}
    \newtheorem*{remark}{Remark}

    \newcommand{\BA}{{\mathbb {A}}} \newcommand{\BB}{{\mathbb {B}}}
    \newcommand{\BC}{{\mathbb {C}}} \newcommand{\BD}{{\mathbb {D}}}
    \newcommand{\BE}{{\mathbb {E}}} \newcommand{\BF}{{\mathbb {F}}}
    \newcommand{\BG}{{\mathbb {G}}} \newcommand{\BH}{{\mathbb {H}}}
    \newcommand{\BI}{{\mathbb {I}}} \newcommand{\BJ}{{\mathbb {J}}}
    \newcommand{\BK}{{\mathbb {K}}} \newcommand{\BL}{{\mathbb {L}}}
    \newcommand{\BM}{{\mathbb {M}}} \newcommand{\BN}{{\mathbb {N}}}
    \newcommand{\BO}{{\mathbb {O}}} \newcommand{\BP}{{\mathbb {P}}}
    \newcommand{\BQ}{{\mathbb {Q}}} \newcommand{\BR}{{\mathbb {R}}}
    \newcommand{\BS}{{\mathbb {S}}} \newcommand{\BT}{{\mathbb {T}}}
    \newcommand{\BU}{{\mathbb {U}}} \newcommand{\BV}{{\mathbb {V}}}
    \newcommand{\BW}{{\mathbb {W}}} \newcommand{\BX}{{\mathbb {X}}}
    \newcommand{\BY}{{\mathbb {Y}}} \newcommand{\BZ}{{\mathbb {Z}}}

    \newcommand{\CA}{{\mathcal {A}}} \newcommand{\CB}{{\mathcal {B}}}
    \newcommand{\CC}{{\mathcal {C}}} \renewcommand{\CD}{{\mathcal {D}}}
    \newcommand{\CE}{{\mathcal {E}}} \newcommand{\CF}{{\mathcal {F}}}
    \newcommand{\CG}{{\mathcal {G}}} \newcommand{\CH}{{\mathcal {H}}}
    \newcommand{\CI}{{\mathcal {I}}} \newcommand{\CJ}{{\mathcal {J}}}
    \newcommand{\CK}{{\mathcal {K}}} \newcommand{\CL}{{\mathcal {L}}}
    \newcommand{\CM}{{\mathcal {M}}} \newcommand{\CN}{{\mathcal {N}}}
    \newcommand{\CO}{{\mathcal {O}}} \newcommand{\CP}{{\mathcal {P}}}
    \newcommand{\CQ}{{\mathcal {Q}}} \newcommand{\CR}{{\mathcal {R}}}
    \newcommand{\CS}{{\mathcal {S}}} \newcommand{\CT}{{\mathcal {T}}}
    \newcommand{\CU}{{\mathcal {U}}} \newcommand{\CV}{{\mathcal {V}}}
    \newcommand{\CW}{{\mathcal {W}}} \newcommand{\CX}{{\mathcal {X}}}
    \newcommand{\CY}{{\mathcal {Y}}} \newcommand{\CZ}{{\mathcal {Z}}}

    \newcommand{\RA}{{\mathrm {A}}} \newcommand{\RB}{{\mathrm {B}}}
    \newcommand{\RC}{{\mathrm {C}}} \newcommand{\RD}{{\mathrm {D}}}
    \newcommand{\RE}{{\mathrm {E}}} \newcommand{\RF}{{\mathrm {F}}}
    \newcommand{\RG}{{\mathrm {G}}} \newcommand{\RH}{{\mathrm {H}}}
    \newcommand{\RI}{{\mathrm {I}}} \newcommand{\RJ}{{\mathrm {J}}}
    \newcommand{\RK}{{\mathrm {K}}} \newcommand{\RL}{{\mathrm {L}}}
    \newcommand{\RM}{{\mathrm {M}}} \newcommand{\RN}{{\mathrm {N}}}
    \newcommand{\RO}{{\mathrm {O}}} \newcommand{\RP}{{\mathrm {P}}}
    \newcommand{\RQ}{{\mathrm {Q}}} \newcommand{\RR}{{\mathrm {R}}}
    \newcommand{\RS}{{\mathrm {S}}} \newcommand{\RT}{{\mathrm {T}}}
    \newcommand{\RU}{{\mathrm {U}}} \newcommand{\RV}{{\mathrm {V}}}
    \newcommand{\RW}{{\mathrm {W}}} \newcommand{\RX}{{\mathrm {X}}}
    \newcommand{\RY}{{\mathrm {Y}}} \newcommand{\RZ}{{\mathrm {Z}}}

    \newcommand{\fa}{{\mathfrak{a}}} \newcommand{\fb}{{\mathfrak{b}}}
    \newcommand{\fc}{{\mathfrak{c}}} \newcommand{\fd}{{\mathfrak{d}}}
    \newcommand{\fe}{{\mathfrak{e}}} \newcommand{\ff}{{\mathfrak{f}}}
    \newcommand{\fg}{{\mathfrak{g}}} \newcommand{\fh}{{\mathfrak{h}}}
    \newcommand{\fii}{{\mathfrak{i}}} \newcommand{\fj}{{\mathfrak{j}}}
    \newcommand{\fk}{{\mathfrak{k}}} \newcommand{\fl}{{\mathfrak{l}}}
    \newcommand{\fm}{{\mathfrak{m}}} \newcommand{\fn}{{\mathfrak{n}}}
    \newcommand{\fo}{{\mathfrak{o}}} \newcommand{\fp}{{\mathfrak{p}}}
    \newcommand{\fq}{{\mathfrak{q}}} \newcommand{\fr}{{\mathfrak{r}}}
    \newcommand{\fs}{{\mathfrak{s}}} \newcommand{\ft}{{\mathfrak{t}}}
    \newcommand{\fu}{{\mathfrak{u}}} \newcommand{\fv}{{\mathfrak{v}}}
    \newcommand{\fw}{{\mathfrak{w}}} \newcommand{\fx}{{\mathfrak{x}}}
    \newcommand{\fy}{{\mathfrak{y}}} \newcommand{\fz}{{\mathfrak{z}}}
     \newcommand{\fA}{{\mathfrak{A}}} \newcommand{\fB}{{\mathfrak{B}}}
    \newcommand{\fC}{{\mathfrak{C}}} \newcommand{\fD}{{\mathfrak{D}}}
    \newcommand{\fE}{{\mathfrak{E}}} \newcommand{\fF}{{\mathfrak{F}}}
    \newcommand{\fG}{{\mathfrak{G}}} \newcommand{\fH}{{\mathfrak{H}}}
    \newcommand{\fI}{{\mathfrak{I}}} \newcommand{\fJ}{{\mathfrak{J}}}
    \newcommand{\fK}{{\mathfrak{K}}} \newcommand{\fL}{{\mathfrak{L}}}
    \newcommand{\fM}{{\mathfrak{M}}} \newcommand{\fN}{{\mathfrak{N}}}
    \newcommand{\fO}{{\mathfrak{O}}} \newcommand{\fP}{{\mathfrak{P}}}
    \newcommand{\fQ}{{\mathfrak{Q}}} \newcommand{\fR}{{\mathfrak{R}}}
    \newcommand{\fS}{{\mathfrak{S}}} \newcommand{\fT}{{\mathfrak{T}}}
    \newcommand{\fU}{{\mathfrak{U}}} \newcommand{\fV}{{\mathfrak{V}}}
    \newcommand{\fW}{{\mathfrak{W}}} \newcommand{\fX}{{\mathfrak{X}}}
    \newcommand{\fY}{{\mathfrak{Y}}} \newcommand{\fZ}{{\mathfrak{Z}}}
    \newcommand{\Supp}{\operatorname{Supp}}
    \newcommand{\coker}{\operatorname{coker}}
\title[Explicit CM Type Norm Formula and Effective Nonvanishing]{An Explicit CM Type Norm Formula and Effective Nonvanishing of Class Group L-functions for CM Fields}%
\author[Liyang Yang]{ Liyang Yang}

\address{253-37 Caltech, Pasadena\\
CA 91125, USA}
\email{lyyang@caltech.edu}

\begin{abstract}
We show that the central value of class group $L$-functions of general CM fields can be expressed in terms of derivatives of real-analytic Hilbert Eisenstein series at CM points. Using this in conjunction with an explicit CM type norm formula established in Section \ref{sec3}, following an idea of Iwaniec and Kowalski \cite{IK04}, we obtain a conditional explicit lower bound for class numbers of CM fields under the assumption $\zeta_K(1/2)\ll_F
\log D_{K/F}$ (note that GRH implies $\zeta_K(1/2)\leq0$). Some results in the proof lead to an \textit{effective} nonvanishing result for class group L-functions of general CM fields, generalizing the only known ineffective results. Moreover, combining the CM type norm formula with Barquero-Sanchez and Masri's work in \cite{BM16}, one shall deduce an explicit Chowla-Selberg formula for \textit{all} abelian CM fields.
\end{abstract}

\maketitle

\section{Introduction}
\subsection{A Lower Bound for the Class Number of CM Fields}
For imaginary quadratic fields $K=\mathbb{Q}(\sqrt{-D}),$ Gauss' class number problem has for a long time inspired the study of lower bounds of $h(-D),$ the class number of $K.$ Also, the magnitude of $h(-D)$ is closely related to the exceptional characters, i.e., those characters $\chi$ such that $L(s,\chi)$ has a real zero near $s=1$ (ref. \cite{Lan18}, \cite{Go75}, \cite{GS75} and \cite{GS00}). A repelling property of the exceptional zero gives the result $h(-D)\rightarrow \infty$ as $D\rightarrow \infty$ (ref. \cite{D33} and \cite{H34}). Landau then obtained the lower bound $h(-D)\gg_{\epsilon} D^{1/8-\epsilon}$ (ref. \cite{Lan36}) by a quantitative analysis of the repelling effects. In the same year, Siegel got a stronger result: $h(-D)\gg_{\epsilon} D^{1/2-\epsilon}$ (ref. \cite{Si36}). See \cite{Iw06} for a more concrete historical introduction. However, all these results suffer from the serious defect of being ineffective. Hence one can not use them to determine the fields of class number one. Also, there are many other situations requiring an effective lower bound for $h(-D),$ for example, by genus theory, the Euler idoneal number problem calls for an effective lower bound 
\begin{align*}
h(-D)\gg D^{c'/\log\log D},\ \text{with $c'>\log 2.$}
\end{align*}
Generally one hopes to show that $h_{K}\geq C D_K^c$ for some positive (absolute) constant $c$ and $C,$ where $h_K$ is the class number and $D_K$ is the absolute discriminant. 

Unconditionally, Stark (ref. Theorem 2 in \cite{Str74}) gave an effective lower bound for the class number of a CM field $K$ of the shape $h_K\gg_{n,\epsilon} D_{K/F}^{1/2-1/2n}D_F^{\epsilon},$ where $\epsilon>0,$ $F$ is a totally real subfield of $K$ with $n=[F:\mathbb{Q}],$ and $D_F$ is the absolute discriminant of $F$ and $D_{K/F}=D_K\cdot D_F^{-2}.$ When $[K:\mathbb{Q}]\geq 20,$ Hoffstein (ref. \cite{Ho79}) generalized Stark's result by removing the $D_F^{\epsilon}$-term and computing the implied constant explicitly. Also, for any CM field $K,$ Odlyzko (ref.  \cite{O75}) gives an effective lower bound of $h_K$ in terms of the degree of $K.$ For $n=1$ the known unconditional lower bound is $\prod_{p\mid D_K}\left(1-{2\sqrt{p}}/({p+1})\right)^{-1}\log D_K$ due to Goldfeld, Gross and Zagier (ref. \cite{Go85} and \cite{GZ86}). On the other hand, if we assume the Grand Riemann Hypothesis, the exponent $c$ can be taken to be $1/2-\epsilon$ for any $\epsilon>0.$ Moreover, assuming the Dedekind zeta function $\zeta_K(s)$ has a Siegel zero, Louboutin (ref. \cite{Lou94}) obtained effective lower bounds for $h_K,$ with $c=1/4.$

It is well known that the class group $L$-functions of an imaginary quadratic field $K=\mathbb{Q}(\sqrt{-D})$ can be expressed in terms of values of the real-analytic Eisenstein series for $SL_2(\mathbb{Z})$ at Heegner points (ref. \cite{DFI95}). Based on this fact, Iwaniec and Kowalski \cite{IK04} obtained an effective lower bound for the class number $h_K\gg D^{1/4}\log D$ assuming that $L(1/2,\chi_{K/{\mathbb{Q}}})\geq0,$ where $\chi_{K/{\mathbb{Q}}}$ is the quadratic character corresponding the extension $K/{\mathbb{Q}}$ (ref. Section \ref{sec2}). In this paper we generalize Iwaniec and Kowalski's result to arbitrary CM fields and obtain an expression for class group $L$-functions in terms of derivatives of real-analytic Hilbert Eisenstein series at CM points. Due to the estimates on the Fourier expansions we show that for any CM field $K,$ if $\zeta_K(1/2)\leq0,$ then $c=1/4$ is admissible, and the implied constant is effective.

A precise statement is our Theorem \ref{main}. To achieve it, we shall introduce some analytic objects with respect to $F.$ We let $\gamma_F^*$ be the normalized Euler-Kronecker constant, i.e.,
\begin{equation}\label{normized}
\gamma_F^*=\lim_{s\rightarrow1}\left(\rho_F^{-1}\zeta_F(s)-\frac{1}{s-1}\right).
\end{equation}
For any CM extension $K/F,$ we will always denote by $D_F$ (resp. $D_K$) the absolute discriminant of $F$ (resp. $K$). Let $h_F$ (resp. $h_K$) be the class number of $F$ (resp. $K$). Then we have
\begin{thmx}\label{main}
Let $F/\mathbb{Q}$ be a totally real field of degree n. Let $K/F$ be a CM extension. Assume that
\begin{equation}\label{assump}
\zeta_K(1/2)\leq \frac{\rho_F}{4[\mathcal{O}^{\times}_K:\mathcal{O}^{\times}_F]\cdot h_F}
\log\frac{\sqrt{D_K}}{D_F}.
\end{equation}
Then we have
\begin{equation}\label{0.1}
h_K\geq{\Psi_F}^{-1}\cdot D_K^{1/4}\log\frac{\sqrt{D_K}}{D_F},
\end{equation}
where
$$
\Psi_F:=\pi^{-n}\rho_F^{-1}h_FD_F^{2}+e^{2\gamma_F^*+2n+2}\Upsilon_{F}^*D_F^{3/2} ,
$$
and $\Upsilon_{F}^*=4\gamma_F^*+3\log D_F+4n+\sqrt{7n}+8.$
\end{thmx}

\begin{remark}
	Noting that $\zeta_K(s)$ is continuous on $(1/2,1)$ and $\lim_{s\rightarrow 1^-}\zeta_K(s)=-\infty,$ the Grand Riemann Hypothesis then gives $\zeta_K(1/2)\leq0,$ which is stronger than the assumption \eqref{assump}. Unconditionally, people only know that $\zeta_{\mathbb{Q}}(1/2)<0$ currently. Also, we can keep the normalized Euler-Kronecker constant $\gamma_F^*$ here as an invariant of $F$. One can refer Theorem 7 in \cite{MJ07} for an elementary upper bound for $\gamma_F^*$ and Theorem 1 in \cite{Ih06} for a conditional upper bound.
\end{remark}
\begin{remark}
Note that Stark's bound is sharper than \eqref{0.1} except $[K:\mathbb{Q}]\leq 4.$ However, the proof of Theorem \ref{main}, when invoking with Stark's result, leads to effective results on nonvanishing of class group L-functions (and their derivatives) for CM fields. Note that previous results are all ineffective. See Section \ref{2} for more details.

Also, we point out that, with a little bit more work, the inequality \eqref{0.1} can be naturally generalized to a conditional lower bound for $h_{\mathfrak{O}},$ where $\mathfrak{O}$ is an order of $K$ and $h_{\mathfrak{O}}$ denotes the number of proper $\mathfrak{O}-$ideal classes of $\mathfrak{O},$ since everything in this paper has a counterpart in the order case. 
\end{remark}

The outline of the proof to Theorem \ref{main} is described in Section \ref{sec2.2} below, with the new ingredients involved. See the rest of the sections for lengthy details.

Also, we have the following corollary to go beyond \cite{IK04} by plugging the upper bound for $\gamma_F^*$ given in Lemma \ref{28''} into \eqref{0.1}. It is of course weaker than the conditional result in \cite{Ih06} under GRH; however, it is simple enough compared to the elementary bound in \cite{MJ07}. 
\begin{cor}\label{cormain}
	Let notation be as before. Then there exist absolute constants $c_1,$ $c_2>0$ such that if $\zeta_K(1/2)$ satisfies \eqref{assump}, then we have
	\begin{equation}\label{0.01}
	h_K^-\geq c_1D_F^{-c_2}\cdot D_K^{1/4}\log D_{K/F},
	\end{equation}
	where, $h_K^{-}$ is the relative class number of $K/F$ and $D_{K/F}=D_F^{-2}D_K$ is the relative discriminant of $K/F.$ 
	\end{cor}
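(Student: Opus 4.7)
The plan is to reduce Corollary \ref{cormain} to Theorem \ref{main} by (i) passing from $h_K$ to the relative class number $h_K^-=h_K/h_F$ and (ii) bounding $h_F\Psi_F$ by an absolute power of $D_F$. The two key inputs are Minkowski's discriminant bound and the elementary upper bound for $\gamma_F^*$ supplied by Lemma \ref{28''}.

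By Brauer's class number formula for CM fields, $h_K^-=h_K/h_F$ is a positive integer, so dividing \eqref{0.1} by $h_F$ and writing $\log(\sqrt{D_K}/D_F)=\tfrac12\log D_{K/F}$ yields
\begin{equation*}
h_K^-\;\geq\;\frac{1}{2\,h_F\,\Psi_F}\,D_K^{1/4}\log D_{K/F}.
\end{equation*}
It therefore suffices to prove $h_F\Psi_F\ll D_F^{c_2}$ for some absolute $c_2>0$.

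For the second summand of $\Psi_F$, Minkowski's bound $\sqrt{D_F}\gg n^n/n!$ forces $e^{2n}\ll D_F$ and $n\ll\log D_F$; combined with $\gamma_F^*\ll \log D_F$ from Lemma \ref{28''}, this yields $e^{2\gamma_F^*+2n}\ll D_F^{A}$ for an absolute $A$, while $\Upsilon_F^*\ll \log D_F$ is trivial. For the first summand, the class number formula for the totally real field $F$, $\rho_F = 2^{n-1}R_Fh_F/\sqrt{D_F}$, combined with the effective Zimmert--Friedman lower bound $R_F\geq c_0>0$, gives $\pi^{-n}\rho_F^{-1}h_F D_F^2\ll D_F^{5/2}$. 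Multiplying through by $h_F$ and invoking the effective Landau--Brauer estimate $h_F\ll \sqrt{D_F}(\log D_F)^{n-1}/R_F\ll D_F^{1/2+\epsilon}$ (using $(\log D_F)^n\ll D_F^{\epsilon}$, which is valid because $n\ll \log D_F$), one obtains $h_F\Psi_F\ll D_F^{c_2}$ with $c_2$ absolute, from which \eqref{0.01} follows.

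The delicate point will be keeping every implied constant \emph{effective} and independent of $n=[F:\BQ]$: the exponential factor $e^{2\gamma_F^*+2n+2}$ and the term $\pi^{-n}\rho_F^{-1}h_F$ in $\Psi_F$ both carry potentially catastrophic $n$-dependence, and the whole argument hinges on using Minkowski, Zimmert--Friedman, and the elementary Mourtada--Murty-type estimate behind Lemma \ref{28''} to collapse these into a single power of $D_F$ with absolute exponent.
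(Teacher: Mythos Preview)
Your reduction to Theorem \ref{main} is exactly the route the paper takes (it says only that \eqref{0.01} ``follows from \eqref{0.1} and elementary computations of $M(n,0)$ and $M(0,n)$'' after plugging in Lemma \ref{28''}), and your treatment of the second summand of $\Psi_F$ and of $\pi^{-n}\rho_F^{-1}h_FD_F^2\ll D_F^{5/2}$ via the analytic class number formula and Zimmert's $R_F\geq c_0$ is correct.

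There is, however, a genuine gap in your bound for $h_F$. The assertion ``$(\log D_F)^n\ll D_F^{\epsilon}$, which is valid because $n\ll\log D_F$'' is false: there exist totally real fields (e.g.\ those arising in infinite class field towers) with $D_F^{1/n}$ bounded, so that $n\asymp\log D_F$, and then $(\log D_F)^n=\exp(n\log\log D_F)=D_F^{\,c\log\log D_F}$ has unbounded exponent. Thus the Landau-type estimate $h_F\ll\sqrt{D_F}(\log D_F)^{n-1}/R_F$ does \emph{not} yield $h_F\ll D_F^{1/2+\epsilon}$ with an absolute exponent.

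The fix is to use Louboutin's sharper effective bound $\rho_F\leq\bigl(\tfrac{e\log D_F}{2(n-1)}\bigr)^{n-1}$ in place of Landau's. Writing $L=\log D_F$ and $m=n-1$, Minkowski gives $m\leq L/2$ up to absolute constants, and the function $m\mapsto m\log(eL/(2m))$ is increasing on $(0,L/2]$ with maximum $L/2$ at $m=L/2$; hence $\log\rho_F\leq\tfrac12\log D_F$, i.e.\ $\rho_F\ll D_F^{1/2}$ with an absolute effective constant. Feeding this into $h_F=\rho_F\sqrt{D_F}/(2^{n-1}R_F)$ and Zimmert's bound gives $h_F\ll D_F$, and then $h_F\Psi_F\ll D_F^{c_2}$ with $c_2$ absolute, completing your argument.
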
 
\begin{remark}
Combining the analytic class number formula with a result of Louboutin (ref. Proposition A of \cite{Lou94}), one actually sees that if we assume $\zeta_K(1/2)\leq0,$ then essentially $h_K^-\gg D_K^{1/4},$ where the implied constant is effective. According to the conditional estimate $\gamma_F^*\ll \log\log D_F$ (ref.  \cite{Ih06}), it is expected that one can take $c_2=3+\varepsilon$ in \eqref{0.01} for any $\varepsilon>0.$ So \eqref{0.01} is an improvement of Louboutin's result if we fix $F$ and let $D_K$ vary. Also, by Hermite's theorem (ref. \cite{He57}) one sees that almost all $h_K^{-}$ is a positive power of $D_{K/F}.$
\end{remark}

\subsection{Nonvanishing of Class Group $L$-functions}\label{2}
It is an important problem in Number Theory to determine whether an interesting $L$-function is nonvanishing at the central value $s=1/2.$ However, it is usually pretty difficult to check individually. A common strategy is to consider instead the average of $L$-functions over a family. One fruitful way for dealing with such averages combines periods relations of Waldspurger type with the equidistribution of special points on varieties (see e.g. \cite{MV05}, \cite{Ma10}). However, in this paper, we will use purely analytic methods to obtain asymptotic expressions for some averages of weighted class group $L$-functions. This way has several advantages. For instance, one can obtain effective nonvanishing results, namely, avoiding using Siegel's bound.

Although one can obtain the nonvanishing results on the critical line $\Re(s)=1/2$ by the same way, we just focus ourselves here on the central value $s=1/2$ for simplicity. Precisely, we will use byproducts from the proof of Theorem \ref{main} to obtain a lower bound for the number of nonvanishing class group L-functions as \eqref{cor*}. This bound, as can be seen, is almost as good as the conjectural magnitude. Moreover, a significant difference between our approach and that in \cite{Ma10} is that subconvexity bounds are not essential for us. Therefore, one might be able to handle higher derivative case by general convex bounds for Dirichlet series via our methods here. 
\begin{thmx}\label{c}
	Let $F/\mathbb{Q}$ be a totally real field and $K/F$ is a CM extension. Denote by $L^{(0)}_K(\chi,1/2)=L_K(\chi,1/2)$ and $L^{(1)}_K(\chi,1/2)=L'_K(\chi,1/2),$ the derivative of $L_K(\chi,s)$ at $s=1/2.$ Then for any $\varepsilon>0$ we have
	\begin{equation}\label{cor*}
	\#\Big\{\chi\in\widehat{Cl(K)}:\; L^{(k)}_K\left(\chi,1/2\right)\neq0\Big\}\gg_{F} \frac{h_K}{\log D_K},\quad k=0,1,
	\end{equation}
	where the implied constant in \eqref{cor*} is computable.
\end{thmx}
\begin{remark}
When $F=\mathbb{Q},$ i.e. $K$ is imaginary quadratic, Blomer followed \cite{DFI95} to prove a better lower bound (ref. \cite{B04})
	$$
	\#\Big\{\chi\in\widehat{Cl(K)}:\; L_K\left(\chi,1/2\right)\neq0\Big\}\geq c\cdot h_K\prod_{p\mid D_K}\left(1-\frac1p\right)
	$$
	for some constant $c>0$ and all sufficiently large $D_K.$ Once $D_K$ is chosen, the constant $c$ can be taken explicitly. However, one does not know how large $D_K$ must be chosen for this lower bound to be valid because of an application of Siegel's lower bound for $L(1,\chi_{K/F}).$ For a general CM extension $K/F$, when $F$ has trivial class group, Masri (ref. \cite{Ma10}) proved a lower bound as  
	\begin{equation}\label{M}
	\#\Big\{\chi\in\widehat{Cl(K)}:\; L_K\left(\chi,1/2\right)\neq0\Big\}\gg_{F,\varepsilon} D_K^{1/100-\varepsilon}.
	\end{equation}
	 Again, this bound is ineffective since Siegel's bound is used here. The exponent $1/100$ comes from an application of  subconvexity bound for $GL(2).$
\end{remark}

Together with Stark's effective lower bound, Theorem \ref{c} above implies that 
\begin{cor}\label{cor*1}
	Let $F/\mathbb{Q}$ be a totally real field of degree $n$ and $K/F$ a CM extension. Then we have
	\begin{equation}\label{cor**}
	\#\Big\{\chi\in\widehat{Cl(K)}:\; L^{(k)}_K\left(\chi,1/2\right)\neq0\Big\}\gg_{F,\varepsilon} \frac{D_K^{\frac{1}{2}-\frac1{2n}}}{\log D_K},\quad k=0,1,
	\end{equation}
Moreover, the implied constants in \eqref{cor**} are computable. 
\end{cor}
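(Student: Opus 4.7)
The plan is to deduce Corollary \ref{cor*1} as a direct consequence of Theorem \ref{c} combined with Stark's unconditional effective lower bound for $h_K$ that was recalled in the introduction. Since Theorem \ref{c} already produces a lower bound of the form $h_K/\log D_K$ with computable implied constant for both $k=0$ and $k=1$, the only remaining task is to convert the factor $h_K$ into an explicit power of $D_K$ by invoking an effective class number bound.

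Concretely, I would first quote Stark's theorem (Theorem 2 in \cite{Str74}) in the form
$$
h_K\gg_{n,\varepsilon} D_{K/F}^{1/2-1/(2n)}\,D_F^{\varepsilon},
$$
valid for the CM extension $K/F$ with $n=[F:\BQ]$. Since the corollary fixes $F$, both $n$ and $D_F$ are absorbed into the implied constant, and the relative discriminant satisfies $D_{K/F}=D_K\cdot D_F^{-2}$, which gives
$$
h_K\gg_{F,\varepsilon} D_K^{1/2-1/(2n)}.
$$
Substituting this into the conclusion \eqref{cor*} of Theorem \ref{c} immediately yields
$$
\#\Big\{\chi\in\widehat{Cl(K)}:\;L_K^{(k)}(\chi,1/2)\neq 0\Big\}\gg_{F,\varepsilon}\frac{D_K^{1/2-1/(2n)}}{\log D_K},
$$
for $k=0,1$, as required.

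There is essentially no obstacle: the substance lies entirely in Theorem \ref{c}, and the present step is just a bookkeeping combination. The only point worth checking is effectiveness of the implied constants. Stark's bound is effective (this is precisely its virtue over Siegel's), and the constant in Theorem \ref{c} is explicitly computable by assumption; hence their product remains effective, which is exactly the gain over the bound \eqref{M} of Masri that relied on Siegel's ineffective input.
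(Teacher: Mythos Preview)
Your proposal is correct and follows exactly the route the paper indicates: the corollary is stated immediately after Theorem \ref{c} with the sentence ``Together with Stark's effective lower bound, Theorem \ref{c} above implies that\ldots'', and your argument simply spells out this combination, including the effectiveness check.
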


\begin{remark}
	
	By \eqref{cor*} and Theorem \ref{main}, if we assume \eqref{assump}, in particular, if we assume $\zeta_K(1/2)\leq0$, then one obtains
	\begin{equation}
	\#\Big\{\chi\in\widehat{Cl(K)}:\; L^{(k)}_K\left(\chi,1/2\right)\neq0\Big\}\gg_{F,\varepsilon} D_K^{1/4},\quad k=0,1,
	\end{equation}
	where the implied constant is computable. Moreover, Siegel's theorem gives that
	\begin{equation}\label{15}
	\#\Big\{\chi\in\widehat{Cl(K)}:\; L^{(k)}_K\left(\chi,1/2\right)\neq0\Big\}\gg_{F,\varepsilon} \frac{D_K^{1/2}}{\log D_K},\quad k=0,1,
	\end{equation}
	where the implied constant is ineffective. Note that \eqref{15} is a significant improvement of \eqref{M}.
\end{remark}

\subsection{An Explicit Chowla-Selberg Formula for General Abelian CM Fields}
The celebrating Chowla-Selberg formula was first proved by Chowla and Selberg for imaginary quadratic fields in \cite{CS67} by analytic methods. A geometric interpretation was given by Gross (ref. \cite{G78}).
Yoshida obtained such a formula for arbitrary CM fields in \cite{Yo03}.  For any abelian CM field which contains a totally real subfield with trivial narrow class group, Barquero-Sanchez and Masri (ref. \cite{BM16}), combining with Lerch's identity (ref. \cite{L97}) and the results in \cite{D84}, were able to obtain an explicit Chowla-Selberg formula in terms of (generalized) gamma functions, paralleling the original Chowla-Selberg formula.

In this section, we point out that Barquero-Sanchez and Masri's idea works for $all$ CM fields with our Proposition \ref{1} being the new input. Hence we shall avoid repeating the proof of \cite{BM16} and just state the generalized formula below.

Let $K$ be an abelian CM field and $F$ is its maximal totally real subfield. Then there is some $N\in\mathbb{N}_{\geq1}$ such that $K\subset \mathbb{Q}(\zeta_N)$ with $\zeta_N=e^{2\pi i/N}.$ Let $H_K$ (resp. $H_F$) be the subgroup of $\Gal(\mathbb{Q}(\zeta_N)/\mathbb{Q})$ which fixes $K$ (resp. $F$). Fix an isomorphism $\Gal(\mathbb{Q}(\zeta_N)/\mathbb{Q})\simeq \left(\mathbb{Z}/N\mathbb{Z}\right) ^{\times}.$ Set $X_K=\{\chi\in \widehat{\left(\mathbb{Z}/N\mathbb{Z}\right) ^{\times}}:\ \chi\mid_{X_K}=1\}.$ Let $X_F$ be defined similarly. For any $\chi\in X_K,$ it can be written as $\chi=\chi^{*}\chi_0,$ where $\chi^*$ is primitive and $\chi_0$ is trivial. Write $c_{\chi}$ the conductor of $\chi^*.$ Since $\chi^{*}$ is uniquely determined by $\chi,$ then $c_{\chi}$ is well-defined. Define the Gauss sum associated with $\chi\in X_E$ to be
$$
\tau(\chi)=\sum_{k=1}^{c_{\chi}}\chi(k)e^{2\pi ki/c_{\chi}}.
$$
Define the function $\Gamma_2(w)=e^{R(w)},$ $\Re(w)>0,$ where
$$
R(x)=\lim_{n\rightarrow \infty}\left(-\zeta_{\mathbb{Q}}''(0)+x\log^2n-\log^2x-\sum_{k=1}^{n-1}\left(\log^2(x+k)-\log^2k\right) \right).
$$
This function $\Gamma_2(w)$ is defined in \cite{D84} and it is analogous to $\Gamma(s)/\sqrt{2\pi}.$
Now we can state a general version of the Chowla-Selberg formula for abelian CM fields.

\begin{thmx}\label{9}
Let $F/\mathbb{Q}$ be a totally real field of degree $n$ and $K/F$ a CM extension with $K/\mathbb{Q}$ abelian. Let $\Phi$ be the fixed CM type for $K$ as before. For any fractional ideal $\mathfrak{a}$ of $K,$ denote by $\mathfrak{f}_{\mathfrak{a}}$ a fixed integral ideal in the Steinitz class of $\mathfrak{a}$ with minimal absolute norm, and write $z_{\mathfrak{a}}$ for the corresponding CM point of type $(K,\Phi).$ Then one has
\begin{align*}
\prod_{[\mathfrak{a}]\in Cl(K)}H(z_{\mathfrak{a}},\mathfrak{f}_{\mathfrak{a}})=C_{F}^K\prod_{\chi\in X_K\setminus X_F}\prod_{k=1}^{c_{\chi}}\Gamma\left(\frac{k}{c_{\chi}}\right) ^{\frac{h_K\chi(k)}{2L(0,\chi)}}\prod_{\substack{\chi\in X_F\\ \chi\neq 1}}\prod_{k=1}^{c_{\chi}}\Gamma_2\left(\frac{k}{c_{\chi}}\right) ^{\frac{h_K\tau(\chi)\overline{\chi}(k)}{2c_{\chi}L(1,\chi)}},
\end{align*}
where for any fractional ideal $\mathfrak{b}$ of $F,$ $H(z;\mathfrak{b})=\big[N_{\Phi}(\Im (z)){N_{F/\mathbb{Q}}(\mathfrak{b})}^{-1}\big]^{1/h_F}\phi(z;\mathfrak{b}),$  and $\phi(z;\mathfrak{b})$ is defined via
\begin{align*}
\log\phi(z;\mathfrak{b})&=-\frac{2\pi^nD^{1/2}_Fy^{\bm{\sigma}}}{\rho_FN_{F/\mathbb{Q}}(\mathfrak{b})}\cdot\zeta_F(2,[\mathcal{O}_K])-\frac{2\rho_F^{-1}}{D_FN_{F/\mathbb{Q}}(\mathfrak{b})}\sum_{b\in F^{\times}}|N(b)|^{-1}\lambda(b,0)\mathbf{e}(bz),
\end{align*}
and 
$$
\lambda(b,s)=\mathop{\sum\sum}_{\substack{(a,c)\in\mathfrak{b}^{-1}\mathfrak{o}^{-1}\times\mathcal{O}_F^{\times}/\mathcal{O}^{\times,+}_F\\ac=b}}|c|^{(1-2s)\bm{\sigma}}.
$$ 
Also, the constant $C_{F}^K=\left({2^{-n-1}\pi^{-1}{D_K}^{-1/2}}{D_F}\right) ^{h_K/2}.$
\end{thmx}

\begin{remark}
It is clear that by definition $H(z_{\mathfrak{a}},\mathfrak{f}_{\mathfrak{a}})$ is independent of the choice of $\mathfrak{f}_{\mathfrak{a}}.$ If $h_F^-=1,$ we may take $\mathfrak{f}_{\mathfrak{a}}=\mathcal{O}_F$ to recover Theorem 1.1 in \cite{BM16}. Combining with Colmez's theorem (ref. \cite{Col93}), the formula above can be used to compute the average of Faltings heights of certain CM abelian varieties. 
\end{remark}

\textbf{Acknowledgements.}
I am grateful to my advisor Dinakar Ramakrishnan for very helpful discussions and comments. I would like to thank Tonghai Yang for his careful reading and suggestions; and Jeffrey Hoffstein, St\'ephane  Louboutin for their comments. Sincere thanks are also due to Zavosh Amir-Khosravi for his kind help.

\section{Proof of Theorem \ref{main} in Imaginary Quadratic Case}\label{sec2}
\subsection{Review of the Imaginary Quadratic Case}
We start by reviewing the case of Theorem \ref{main} in the imaginary quadratic case. This is Iwaniec-Kowalski's original idea. For the sake of illustration, we give a brief proof following \cite{IK04}.

Let $K=\mathbb{Q}\left(\sqrt{-D}\right)$ be a imaginary quadratic field. Since $\mathbb{Q}$ has class number 1, we can often factor a non-zero integral ideal uniquely as $(l)\mathfrak{a}$ where $l\in\mathbb{Z}_{>0}$ and $\mathfrak{a}$ is a primitive ideal, i.e., $\mathfrak{a}$ has no rational integer factors other than $\pm1$.

If $\mathfrak{a}$ is primitive, then it is generated by $\mathfrak{a}=\Big[a,\frac{b+i\sqrt{D}}2\Big],$ where $a=N\mathfrak{a}$ and $b$ solves the congruence $b^2+D\equiv0\ (\; mod\; 4a),$ and is determined modulo $2a.$

Conversely, given such $a$ and $b$ we get a primitive ideal $\mathfrak{a}=\Big[a,\frac{b+i\sqrt{D}}2\Big].$ Thus there exists a one-to-one correspondence between the primitive ideals and the points
$$
z_{\mathfrak{a}}:=\frac{b+i\sqrt{D}}{2a}\in \mathbb{H}\quad\text{determined by modulo 1}.
$$
These will be called the Heegner points. Moreover, we have $\mathfrak{a}^{-1}=[1,\bar{z}_{\mathfrak{a}}].$ Then according to \cite{DFI95} one has the following formula:
\begin{equation}\label{1.1}
\frac1h\sum_{\chi\in\widehat{Cl}(K)}\chi(\mathfrak{a})L_K\left(s,\chi\right)=w^{-1}\left(\frac{\sqrt{D}}2\right)^{-s}\zeta(2s)E\left(z_{\mathfrak{a}},s\right)\tag{1.1},
\end{equation}
where $h=h_K$ is the class number, $w$ is the root of units of $K,$ $\mathfrak{a}$ is any primitive ideal, $z_{\mathfrak{a}}$ is the Heegner point, and $E(z,s)$ is the real analytic Eisenstein series of weight 0 for the modular group. The Eisenstein series $E(z,s)$ admits the Fourier expansion:
\begin{align*}
\Theta(s)E(z,s)=\Theta(s)y^{s}+\Theta(1-s)y^{1-s}+4y^{\frac12}\sum_{k=1}^{\infty}\sum_{mn=k}\left(\frac{m}n\right)^{it}K_{it}\left(2\pi ky\right)\cos\left(2\pi kx\right),
\end{align*}
where $\Theta(s):=\pi^{-s}\Gamma(s)\zeta(2s).$ Applying Fourier inversion we get from $(1.1)$ that
\begin{equation}
L_K\left(s,\chi\right)=w^{-1}\left(\frac{\sqrt{D}}2\right)^{-s}\zeta(2s)\sum_{z_{\mathfrak{a}\in\Lambda_D}}\bar{\chi}(\mathfrak{a})E\left(z_{\mathfrak{a}},s\right)\tag{1.2},
\end{equation}
where $\Lambda_D:=\left\{z_{\mathfrak{a}}\in \mathcal{F}:\ \text{$\mathfrak{a}$ primitive}\right\},$ and $\mathcal{F}$ is the fundamental domain for $SL_2(\mathbb{Z}).$

Clearly from the Fourier expansion we have $E(z,1/2)\equiv0,$ since $\zeta(2s)\sim\frac1{2s-1}$ when $s\rightarrow\frac12$ and the right hand side is well defined.

Thus take the derivative of $(1.2)$ at $s=\frac12$ and note $K_0(y)\ll y^{-\frac12}e^{-y}$ to get
\begin{align*}
L_K\left(\frac12,\chi\right)&=\frac{\sqrt{2}}w|D|^{-\frac14}\sum_{\mathfrak{a}}\bar{\chi}(\mathfrak{a})E'\left(z_{\mathfrak{a}},\frac12\right)\\
&=\frac1w\sum_{\mathfrak{a}}\frac{\bar{\chi}(\mathfrak{a})}{\sqrt{a}}\Big\{\log\frac{\sqrt{|D|}}{2a}+4\sum_{n=1}^{\infty}\tau(n)K_0\left(\frac{\pi n\sqrt{|D|}}{a}\right)\cos\left(\frac{\pi nb}{a}\right)\Big\}\\
&=\frac12\sum_{\mathfrak{a}}\frac{\bar{\chi}(a)}{\sqrt{a}}\log\frac{\sqrt{|D|}}{2a}+O\left(h(D)|D|^{-\frac14}\right),
 \end{align*}
since $\#\Lambda_D=h(D).$

Assuming $L(1/2,\chi_D)\geq0,$ i.e. $L_K(\frac12,\chi_0)=\zeta(\frac12)L(\frac12,\chi_D)\leq0,$
we derive
\begin{align*}
h(D)|D|^{-\frac14}&\gg\sum_{\mathfrak{a}}\frac1{\sqrt{a}}\log\frac{\sqrt{|D|}}{2a}
=\sum_{\mathfrak{a}}\frac1{\sqrt{a}}\log\frac{\sqrt{|D|}}{a}+O\left(h(D)|D|^{-\frac14}\right).
\end{align*}
Thus we have $h(D)|D|^{-\frac14}\gg\sum_{\mathfrak{a}}\frac1{\sqrt{a}}\log\frac{\sqrt{|D|}}{a}\gg\log |D|,$ which implies
\begin{thm}[Iwaniec-Kowalski]\cite{IK04}
Let notation be as before. Assume that $L(1/2,\chi_D)\geq0.$ Then we have
\begin{align*}
h(D)\gg|D|^{1/4}\log |D|.\tag{1.3}
\end{align*}
\end{thm}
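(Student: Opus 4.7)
The plan is to exploit the formula (1.1) of Duke--Friedlander--Iwaniec, which already links the character-weighted sum of class group $L$-functions to the real-analytic Eisenstein series $E(z,s)$ evaluated at the Heegner points $z_\fa$. First, I would Fourier-invert over the character group of $\Cl(K)$ to isolate a single $L_K(s,\chi)$ as a sum of $E(z_\fa,s)$ weighted by $\bar\chi(\fa)$, as in (1.2). This reduces the arithmetic problem to an analytic one about the values of $E$ at Heegner points.

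Next, I would evaluate at the central point $s=1/2$. The factor $\zeta(2s)$ has a simple pole at $s=1/2$, so one needs the functional equation / Fourier expansion of $E(z,s)$ to see that $E(z,1/2)=0$; consequently the pole cancels, the whole object is holomorphic at $s=1/2$, and its value there is obtained by taking the derivative of $E(z,s)$ at $s=1/2$. Plugging in the Fourier expansion, the $y^s+y^{1-s}$ constant-term part differentiates to the leading $\log(\sqrt{|D|}/2a)/\sqrt{a}$ contribution, while the nonzero Fourier modes contribute a Bessel tail $\sum_n \tau(n) K_0(\pi n\sqrt{|D|}/a)\cos(\pi nb/a)$. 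Using the exponential decay bound $K_0(y)\ll y^{-1/2}e^{-y}$, this tail is $O(|D|^{-1/4})$ per ideal class, producing a total error of $O(h(D)|D|^{-1/4})$ after summing over $\Lambda_D$.

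The final step is to exploit the hypothesis $L(1/2,\chi_D)\geq 0$. Since $\zeta(1/2)<0$ unconditionally, factorization $L_K(s,\chi_0)=\zeta(s)L(s,\chi_D)$ yields $L_K(1/2,\chi_0)\leq 0$. Specializing the derivative formula to the trivial character $\chi=\chi_0$ and moving the non-positive left-hand side to the error side gives
\begin{align*}
h(D)|D|^{-1/4}\gg \sum_{\fa\in\Lambda_D}\frac{1}{\sqrt{a}}\log\frac{\sqrt{|D|}}{a},
\end{align*}
and the right-hand side is easily bounded below by $\gg\log|D|$ (keep only the ideals with, say, $a\leq |D|^{1/4}$, where each summand is positive and of size $\gg |D|^{-1/8}\log|D|$). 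Rearranging yields $h(D)\gg |D|^{1/4}\log|D|$.

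The only delicate point is tracking that the Bessel-function tail really is absorbed in the error term uniformly in $\fa\in\Lambda_D$; this requires $\Im(z_\fa)=\sqrt{|D|}/(2a)$ to be not too small, which is automatic because $z_\fa$ lies in the standard fundamental domain $\CF$ of $\SL_2(\BZ)$ and hence $\Im(z_\fa)\geq\sqrt{3}/2$. Apart from that, the argument is a clean combination of Fourier inversion, the explicit Fourier expansion of $E(z,s)$, and one-sided positivity of $L(1/2,\chi_D)$; the CM-field generalization in later sections will have to replace each of these ingredients by its Hilbert-modular analogue.
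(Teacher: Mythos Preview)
Your proposal is correct and follows essentially the same route as the paper's proof: Fourier inversion of (1.1) to get (1.2), vanishing of $E(z,1/2)$, differentiation of the Fourier expansion with the Bessel tail controlled via $K_0(y)\ll y^{-1/2}e^{-y}$ and the fundamental-domain bound $\Im(z_{\mathfrak a})\ge\sqrt3/2$, and finally the sign input $L_K(1/2,\chi_0)\le 0$. For the last lower bound the paper simply uses that every term $\tfrac{1}{\sqrt a}\log(\sqrt{|D|}/a)$ is positive (since $a\le\sqrt{|D|}/\sqrt3$) and then keeps the single term $a=1$, which already gives $\gg\log|D|$; your $a\le|D|^{1/4}$ truncation works too but is a detour.
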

\begin{remark}
The implied constant in $(1.3)$ is absolute. Actually, by estimating everything explicitly one can get an explicit lower bound:
\begin{thm}[\cite{DPS15}]
Let notation be as before. Assume $L(1/2,\chi_D)\geq0,$ then for any $\varepsilon\in(0,\frac12)$ we have
\begin{align*}
h(D)\geq 0.1265\varepsilon|D|^{1/4}\log |D|, \quad  \forall\; D\geq200^{\frac1{1-2\varepsilon}}.
\end{align*}
\end{thm}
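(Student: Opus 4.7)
The plan is to make every step in the Iwaniec--Kowalski derivation fully effective, tracking numerical constants throughout. Starting from the explicit identity
$$
L_K(1/2,\chi_0)=\frac{1}{w}\sum_{\mathfrak{a}}\frac{1}{\sqrt{a}}\log\frac{\sqrt{|D|}}{2a}+\frac{4}{w}\sum_{\mathfrak{a}}\frac{1}{\sqrt{a}}\sum_{n\geq 1}\tau(n)K_0\Big(\frac{\pi n\sqrt{|D|}}{a}\Big)\cos\Big(\frac{\pi nb}{a}\Big)
$$
combined with the hypothesis $L_K(1/2,\chi_0)=\zeta(1/2)L(1/2,\chi_D)\leq 0$, one obtains the key inequality
$$
\sum_{\mathfrak{a}}\frac{1}{\sqrt{a}}\log\frac{\sqrt{|D|}}{2a}\leq -4\sum_{\mathfrak{a}}\frac{1}{\sqrt{a}}\sum_{n\geq 1}\tau(n)K_0\Big(\frac{\pi n\sqrt{|D|}}{a}\Big)\cos\Big(\frac{\pi nb}{a}\Big).
$$
The goal is then to bound the right--hand side above and the left--hand side below by explicit functions of $|D|$ and $h(D)$.

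For the Bessel side, I would apply the inequality $K_0(x)\leq\sqrt{\pi/(2x)}\,e^{-x}$, valid for all $x>0$, together with the reduced--form bound $a\leq\sqrt{|D|/3}$, which gives $\pi n\sqrt{|D|}/a\geq \pi n\sqrt{3}$. Coupling this with an explicit evaluation of $\sum_{n\geq 1}\tau(n)n^{-1/2}e^{-An}$ at $A=\pi\sqrt{3}$ yields an upper estimate of the shape $C_1 h(D)|D|^{-1/4}$ with $C_1$ a small explicit absolute constant. For the main sum on the left, I split the range according to $a\leq X$ versus $a>X$ with $X:=|D|^{1/2-\varepsilon}$. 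In the regime $\sqrt{|D|}/2<a\leq\sqrt{|D|/3}$ the summand is negative but bounded: one has $\log(\sqrt{|D|}/2a)\geq\log(\sqrt{3}/2)$ and $1/\sqrt{a}\leq\sqrt{2}\,|D|^{-1/4}$, so the total negative contribution is at least $-C_2 h(D)|D|^{-1/4}$ with an explicit $C_2$. For $\mathfrak{a}$ with $a\leq X$ one has $\log(\sqrt{|D|}/2a)\geq\varepsilon\log|D|-\log 2$; crucially, the trivial class $\mathfrak{a}=\mathcal{O}_K$ always lies in this range, contributing at least $\varepsilon\log|D|-\log 2$ to the positive part.

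Assembling the three estimates yields an inequality of the form
$$
\varepsilon\log|D|-\log 2\leq(C_1+C_2)h(D)|D|^{-1/4},
$$
which rearranges to the claimed bound once the hypothesis $|D|\geq 200^{1/(1-2\varepsilon)}$ (equivalently, $X\geq\sqrt{200}$) is invoked to absorb the $-\log 2$ term into $\varepsilon\log|D|$ and to guarantee that the Bessel estimate has fully kicked in. The final numerical constant $0.1265$ then emerges as the reciprocal of $C_1+C_2$ after the lower--order losses are accounted for.

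The principal difficulty is not structural but numerical: obtaining the specific constant $0.1265$ requires (i) a sharp inequality for $K_0(x)$ that does not lose a factor at moderate values of $x$, (ii) a careful explicit bound for $\sum_{n\geq 1}\tau(n)n^{-1/2}e^{-An}$ when $A\geq\pi\sqrt{3}$ so that $C_1$ is as small as possible, and (iii) balancing the threshold $200^{1/(1-2\varepsilon)}$ against the loss of $\log 2$ in the positive main contribution so that the leading constant is not degraded. A useful simplification that avoids any ineffective ingredient is to retain only the identity Heegner point in the lower bound for the positive part; this sidesteps counting small--norm primitive ideals (which would otherwise require bounds on $L(1,\chi_D)$) at the cost of only a constant factor.
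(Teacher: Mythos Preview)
The paper does not supply its own proof of this statement; it is quoted from \cite{DPS15} inside a remark, with only the comment that ``by estimating everything explicitly one can get an explicit lower bound'' following the Iwaniec--Kowalski argument reviewed in Section~\ref{sec2}. Your proposal is exactly that programme---a quantitative tracking of the constants in the derivation of (1.3)---and it is structurally sound: the Bessel bound $K_0(x)<K_{1/2}(x)=\sqrt{\pi/(2x)}\,e^{-x}$, the reduced-form bound $a\le\sqrt{|D|/3}$, and the isolation of the trivial class on the positive side are precisely the ingredients the paper itself uses later in the general CM setting (cf.\ the proof of Proposition~\ref{fin}).

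One small point worth noting: once you keep only the trivial class $a=1$ on the positive side, its contribution is already $\tfrac12\log|D|-\log 2$, so the splitting at $X=|D|^{1/2-\varepsilon}$ and the weaker lower bound $\varepsilon\log|D|-\log 2$ are not strictly needed to obtain a result of the claimed shape. The parameter $\varepsilon$ in the \cite{DPS15} formulation presumably arises from a slightly different organization of the error terms (or simply as a device to package the trade-off between the leading constant and the threshold on $|D|$). This does not affect the correctness of your outline, but it does mean that recovering the exact constant $0.1265$ and the exact threshold $200^{1/(1-2\varepsilon)}$ will require matching the bookkeeping in \cite{DPS15} rather than the version you sketch.
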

\end{remark}

\subsection{Sketch of Proof of Theorem \ref{main} and Theorem \ref{c} in General CM Case}\label{sec2.2}
To generalize Iwaniec-Kowalski's results to CM fields case, one has to establish a general form of the Eisenstein period \eqref{1.1} (see below). The analogue of \eqref{1.1} in CM case is easy to build if $F,$ the totally real subfield, has trivial narrow class group (e.g. ref. \cite{Ma10}). In general, one needs to compute CM points associated to each Steinitz class. However, the situation is quite different from the imaginary quadratic case since generally, integral representatives in $Cl(F)$ bounded by Minkowski bounds may not necessarily be primitive. In addition, the CM type norm of the imaginary parts of CM points should be computed explicitly in order to compute the constant terms of Fourier coefficients of Eisenstein series. These problems are solved by the crucial Lemma \ref{prim} in Section \ref{sec3.2} below. Roughly speaking, fix a CM type $\Phi$ on $K,$ then a fractional ideal $\mathfrak{a}\subset K$ corresponds to a CM point $z_{\mathfrak{a}}$ in a Hilbert modular variety (ref. Section \ref{sec3}); we calculate the CM type norm $N_{\Phi}(y_{\mathfrak{a}})$ explicitly, where $y_{\mathfrak{a}}$ the imaginary part of $z_{\mathfrak{a}}.$
 Also, unlike the cases in \cite{IK04} or \cite{Ma10}, generally a single Hilbert Eisenstein series does not have a functional equation, but it turns out that it does vanish at the central point $s=1/2.$ Then we prove Proposition \ref{1}, which is an expression for $L_K(1/2,\chi)$ in terms of derivatives of Eisenstein series similar to \eqref{1.1}. Then following Iwaniec-Kowalski's idea, an explicit lower bound for $L_K(1/2,\chi_0)$ is given in Proposition \ref{fin}, Section \ref{sec4.1}. Furthermore, several effective estimates such as bounds for  $L_K(1,\chi)$ and the normalized Euler-Kronecker constant $\gamma_F^*$ are established in Section \ref{sec4}. With all these preparations, we eventually complete the proof of Theorem \ref{main} in Section \ref{sec4.2}. 
 
 The above mentioned results such as Proposition \ref{1} and Fourier expansion of derivatives of Hilbert Eisenstein series (i.e. Lemma \ref{Der}) will lead to a lower bound for the first moment of class group L-functions and an upper bound for the second moment for class group $L$-functions (ref. \eqref{xx} and \eqref{zz}). Then a standard technique using Cauchy inequality will imply $k=0$ case of Theorem \ref{c}. The $k=1$ case simply follows from the $k=0$ case and functional equation.

\section{Generalization to CM case}\label{sec3}
\subsection{Hilbert Modular Varieties and CM Zero-Cycles}\label{3}
\subsubsection{The Basic Correspondence}
Let $F/\mathbb{Q}$ be a totally real extension of degree $n.$ For any $S\subset F,$ let $S^{+}$ be the subset of $S$ consisting of totally positive elements. Given a fractional ideal $\mathfrak{f}\subset F,$ define
\begin{align*}
\Gamma(\mathfrak{f}):=\Bigg\{\gamma=\left(
                                      \begin{array}{cc}
                                        a & b \\
                                        c & d \\
                                      \end{array}
                                    \right)\in SL(2, F): \quad a,d\in \mathcal{O}_F,\; b\in\mathfrak{f},\; c\in\mathfrak{f}^{-1}
\Bigg\}.
\end{align*}

Let $\mathbb{H}$ be the upper half plane. Then $\Gamma(\mathfrak{f})$ acts on $\mathbb{H}^n$ via
$$
\gamma\cdot z=\left(\sigma_1(\gamma)z_1,\cdots,\sigma_n(\gamma)z_n\right),\quad\forall \; z=(z_1,\cdots,z_n)\in\mathbb{H}^n.
$$
Recall that the quotient
$$
X(\mathfrak{f}):=\Gamma(\mathfrak{f})\backslash\mathbb{H}^n
$$
is the open Hilbert modular variety associated to $\mathfrak{f}.$ It's known (\cite{Go06}, Theorem 2.17) that $X(\mathfrak{f})$ parameterizes isomorphism classes of triples $(A,i,m)$ where $(A,i)$ is an abelian variety with real multiplication $i:\; \mathcal{O}_F\hookrightarrow End(A)$ and
$$
m:\; \left(\mathfrak{M}_A,\mathfrak{M}^{+}_A\right)\rightarrow \left(\left(\mathfrak{o}_F\mathfrak{f}\right)^{-1},\left(\mathfrak{o}_F\mathfrak{f}\right)^{-1,+}\right)
$$
is an $\mathcal{O}_F$-isomorphism $\mathfrak{M}_A\xrightarrow{\sim}\left(\mathfrak{o}_F\mathfrak{f}\right)^{-1}$ which maps $\mathfrak{M}^{+}_A$ to $\left(\mathfrak{o}_F\mathfrak{f}\right)^{-1,+},$ where
$$
\mathfrak{M}_A:=\{\lambda:\; A\longrightarrow A^{\vee}\; \mid\; \lambda\; \text{is a symmetric $\mathcal{O}_F-$linear homomorphism}\}
$$
is the polarization module of $A$ and $\mathfrak{M}^{+}_A$ is its positive cone.

Let $K/F$ be a CM extension and let $\Phi=(\sigma_1,\cdots,\sigma_n)$ be a CM type of K. Then $z=(A,i,m)\in X(\mathfrak{f})$ is a CM point of type $(K,\Phi)$ is one of the following equivalent definitions holds:
\begin{enumerate}
  \item As a point $z\in \mathbb{H}^n,$ there is a point $\tau\in K$ such that
  $$
  \Phi(\tau)=(\sigma_1(\tau),\cdots,\sigma_n(\tau))=z
  $$
  and $\Lambda_{\tau}=\mathfrak{f}+\mathcal{O}_F\tau$ is a fractional ideal of $K.$
  \item $(A,i')$ is a CM abelian variety of type $(K,\Phi)$ with complex multiplication $i:\; \mathcal{O}_K\hookrightarrow End(A)$ such that $i=i'\mid_{\mathcal{O}_F}.$
\end{enumerate}
$\newline$
\noindent
To relate CM points with ideals of K, recall that we have fixed $\varepsilon_0\in K^{\times}$ such that $\overline{\varepsilon_0}=-\varepsilon_0$ and $\Phi(\varepsilon_0)\in \mathbb{H}^n.$ Let $\mathfrak{a}$ be a fractional ideal of $K$ and $\mathfrak{f}^{\mathfrak{a}}:=\varepsilon_0\mathfrak{o}_{K/F}\mathfrak{a}\overline{\mathfrak{a}}\cap F.$

Then by Lemma 3.1 in \cite{BY06}, the ideal class of $\mathfrak{f}^{\mathfrak{a}}$ is the Steinitz class of $\mathfrak{a}\subset K$ as a projective $\mathcal{O}_F-$module. Then it is clear that the CM abelian variety $\left(A_{\mathfrak{a}}=\mathbb{C}^n/\Phi(\mathfrak{a}),i\right)$ has the polarization module
$$
\left(\mathfrak{M}_A,\mathfrak{M}^{+}_A\right)\rightarrow \left(\left(\mathfrak{o}_F\mathfrak{f}^{\mathfrak{a}}\right)^{-1},\left(\mathfrak{o}_F\mathfrak{f}^{\mathfrak{a}}\right)^{-1,+}\right).
$$
To give an $\mathcal{O}_F-$isomorphism between the above pair and  $\left(\left(\mathfrak{o}_F\mathfrak{f}\right)^{-1},\left(\mathfrak{o}_F\mathfrak{f}\right)^{-1,+}\right)$ amounts to giving some $r\in F^{+}$ such that $\mathfrak{f}^{\mathfrak{a}}=r\mathfrak{f}.$ Therefore, to give a CM point $(A,i,m)\in X(\mathfrak{f})$ is the same as to give a pair $(\mathfrak{a},r),$ where $\mathfrak{a}$ is a fractional ideal of $K$ and $\mathfrak{f}^{\mathfrak{a}}=r\mathfrak{f}$ for some $r\in F^{+}.$ Two such pairs $(\mathfrak{a}_1,r_1)$ and $(\mathfrak{a}_2,r_2)$ are equivalent if there exists an $\gamma\in K^{\times}$ such that $\mathfrak{a}_2=\gamma\mathfrak{a}_1$ and $r_2=r_1\gamma\overline{\gamma}.$ We write $[\mathfrak{a},r]$ for the class of pair $(\mathfrak{a},r)$ and identify it with its associated CM point $(A_{\mathfrak{a}},i,m)\in X(\mathfrak{f}).$

Note that for any fractional ideal $\mathfrak{f}\subset K$ and any $r\in F^{+}$ we have the natural isomorphism of varieties:
\begin{align*}
\tau:\; X(\mathfrak{f})\xrightarrow{\qquad \sim \qquad}X(r \mathfrak{f}),\quad
z=(z_i)\longmapsto rz=\left(\sigma_i(r)z_i\right).
\end{align*}
\subsubsection{Upper Bounds of Minkowski Type and Steinitz Class}
Let $L/{\mathbb{Q}}$ be a number field of signature $(r_1, r_2).$ Let $n$ be the degree of $L/{\mathbb{Q}},$ then $n=r_1+2r_2.$ Minkowski showed that there is a constant $M(r_1, r_2)$ only depending on the signature such that for any $\mathcal{C}\in Cl(L),$ there exists an integral ideal $\mathfrak{a}_{\mathcal{C}}\in\mathcal{C}$ satisfying $N_{L/{\mathbb{Q}}}(\mathfrak{a}_{\mathcal{C}})\leq M(r_1, r_2)\sqrt{D_L},$ where $D_L$ is the absolute discriminant of $L.$

We can make the corollary in \cite{Zi81} (ref. P374) more explicit. In fact, an elementary estimate gives that
\begin{equation}\label{zi}
\log\frac{D_L}{N_{L/{\mathbb{Q}}}(\mathfrak{a}_{\mathcal{C}})}\geq(2\log2+\gamma)r_1+(\log2\pi+2\gamma)r_2-\sqrt{7n}.
\end{equation}
Then for $n\geq6,$ the right hand side is always positive since $\sqrt{n}\leq\sqrt{r_1}+\sqrt{2r_2}$. Let $M(n):=\frac{4^{r_2}n!}{\pi^{r_2}n^n}.$ Then $M(n)\cdot\sqrt{D_L}$ gives the Minkowski constant of $L/{\mathbb{Q}}.$ Combining $M(n)$ with \eqref{zi} one can take
\begin{equation}\label{zi*}
M(r_1, r_2):=\min\{e^{-(2\log2+\gamma)r_1-(\log2\pi+2\gamma)r_2+\sqrt{7n}}\cdot 1_{n\geq7}+ M(6)\cdot 1_{n\leq6},\; M(n)\}.
\end{equation}

In particular, when $n$ is large, we have $M(r_1, r_2)\leq50.7^{-r_1/2}\cdot19.9^{-r_2}.$ From now on, we shall fix $M(r_1,r_2)$ in \eqref{zi*}. For totally real extension $F/\mathbb{Q},$ each ideal class $\mathcal{C}\in Cl(F)$ contains an integral ideal $\mathfrak{f}_{\mathcal{C}}$ satisfying $N_{F/{\mathbb{Q}}}(\mathfrak{f}_{\mathcal{C}})\leq M(n,0)\sqrt{D_F}.$

Now we fix a set of fractional ideals
\begin{equation}\label{I_F}
\mathcal{I}^{+}_F:=\Big\{\mathfrak{f}:\; \mathfrak{f}\subset\mathcal{O}_F\; \text{and}\; N(\mathfrak{f})\leq M(n,0)\sqrt{D_F}\Big\},
\end{equation}
such that
$$
Cl(F)^{+}=\{[\mathfrak{f}]:\; \mathfrak{f}\in \mathcal{I}^{+}_F\},
$$
where $Cl(F)^{+}$ denotes the narrow ideal class group of $F.$ For simplicity, we assume $\mathcal{O}_F\in\mathcal{I}^{+}_F.$
Then for any fractional ideal $\mathfrak{a}\subset K,$ there exists a unique $\mathfrak{f}\in\mathcal{I}^{+}_F$ such that
$$
\mathfrak{f}^{\mathfrak{a}}:=\varepsilon_0\mathfrak{o}_{K/F}\mathfrak{a}\overline{\mathfrak{a}}\cap F\in[\mathfrak{f}],
$$
i.e., we can find some $r\in F^{+}$ such that $\mathfrak{f}^{\mathfrak{a}}=r\mathfrak{f}.$ Then by the above discussion, $[\mathfrak{a},r]$ gives a CM point in $X(\mathfrak{f}).$ Actually we can construct the CM point more explicitly. To achieve, let's recall the standard result:

\begin{prop}[\cite{Yo03}. Proposition 2.1. P179]\label{Yoo}
Let $F$ be an arbitrary algebraic number field and $K/F$ be an algebraic extension of degree $n.$ Let $\mathfrak{a}\subset K$ be a fractional ideal. Then there exist $\alpha_1,\cdots,\alpha_n\in F$ and a fractional ideal $\mathfrak{f}\subset F$ such that
\begin{align*}
\mathfrak{a}=\mathcal{O}_F\alpha_1\oplus\cdots\oplus\mathcal{O}_F\alpha_{n-1}\oplus\mathfrak{f}\alpha_n.
\end{align*}
Moreover, we have
$$
[\mathfrak{f}]=[\mathfrak{c}\cdot N_{K/F}(\mathfrak{a})]\in Cl(K),
$$
where $\mathfrak{c}$ is a fractional ideal of $F,$ independent of $\mathfrak{a},$ such that
$$
[\mathfrak{c}^2]=[D_{K/F}],\quad\text{where}\; D_{K/F}:=N_{K/F}(\mathfrak{o}_{K/F}).
$$
\end{prop}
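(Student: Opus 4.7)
The plan is to invoke the structure theorem for finitely generated projective modules over the Dedekind domain $\mathcal{O}_F$ to produce the decomposition, and then to identify the resulting Steinitz class via the relative different. Since $K/F$ has degree $n$, the fractional ideal $\mathfrak{a}\subset K$ is a finitely generated torsion-free $\mathcal{O}_F$-module of generic rank $n$, hence projective. The structure theorem yields an $\mathcal{O}_F$-linear isomorphism $\mathfrak{a}\cong\mathcal{O}_F^{n-1}\oplus\mathfrak{f}$ with $\mathfrak{f}$ a fractional ideal of $F$, and the class $[\mathfrak{f}]\in Cl(F)$ is the Steinitz class of $\mathfrak{a}$, canonically identified with $[\det_{\mathcal{O}_F}\mathfrak{a}]$. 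Transporting basis elements of $\mathcal{O}_F^{n-1}\oplus\mathfrak{f}$ back to $\mathfrak{a}\subset K$ via this isomorphism produces elements $\alpha_1,\ldots,\alpha_n\in K$ with $\mathfrak{a}=\mathcal{O}_F\alpha_1\oplus\cdots\oplus\mathcal{O}_F\alpha_{n-1}\oplus\mathfrak{f}\alpha_n$.

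Second, I would establish the key intermediate identity $[\det_{\mathcal{O}_F}\mathfrak{a}]=[N_{K/F}(\mathfrak{a})]\cdot[\det_{\mathcal{O}_F}\mathcal{O}_K]$ in $Cl(F)$. Scaling $\mathfrak{a}$ by a nonzero element of $F$ (which does not affect the Steinitz class) reduces matters to the integral case $\mathfrak{a}\subset\mathcal{O}_K$, which gives the short exact sequence of $\mathcal{O}_F$-modules $0\to\mathfrak{a}\to\mathcal{O}_K\to\mathcal{O}_K/\mathfrak{a}\to 0$ with torsion quotient. Localizing at a prime $\mathfrak{p}$ of $\mathcal{O}_F$, the elementary divisor theorem over the DVR $\mathcal{O}_{F,\mathfrak{p}}$ equates $v_\mathfrak{p}\bigl(\det\mathfrak{a}\cdot(\det\mathcal{O}_K)^{-1}\bigr)$ with the $\mathcal{O}_{F,\mathfrak{p}}$-length of $(\mathcal{O}_K/\mathfrak{a})_\mathfrak{p}$. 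Writing the $\mathfrak{p}$-part of $\mathfrak{a}$ as $\prod_{\mathfrak{P}\mid\mathfrak{p}}\mathfrak{P}^{a_\mathfrak{P}}$ and counting residue-field dimensions gives this length as $\sum_{\mathfrak{P}\mid\mathfrak{p}} a_\mathfrak{P} f_\mathfrak{P}=v_\mathfrak{p}(N_{K/F}(\mathfrak{a}))$, which proves the identity.

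Third, I would compute $[\det_{\mathcal{O}_F}\mathcal{O}_K]$ via the trace form. The pairing $(x,y)\mapsto\Tr_{K/F}(xy)$ induces an $\mathcal{O}_F$-linear isomorphism $\Hom_{\mathcal{O}_F}(\mathcal{O}_K,\mathcal{O}_F)\cong\mathfrak{o}_{K/F}^{-1}$ (the codifferent), and taking $\mathcal{O}_F$-duals inverts Steinitz classes, so the Steinitz class of $\mathfrak{o}_{K/F}^{-1}$ is the inverse of that of $\mathcal{O}_K$. Applying the identity of the previous step to $\mathfrak{a}=\mathfrak{o}_{K/F}^{-1}$ also gives $[\det\mathfrak{o}_{K/F}^{-1}]=[D_{K/F}]^{-1}\cdot[\det\mathcal{O}_K]$; equating the two expressions yields $[\det\mathcal{O}_K]^2=[D_{K/F}]$. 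Choosing any fractional ideal $\mathfrak{c}$ of $F$ with $[\mathfrak{c}]=[\det\mathcal{O}_K]$ (manifestly independent of $\mathfrak{a}$) and combining the previous identities gives $[\mathfrak{f}]=[N_{K/F}(\mathfrak{a})]\cdot[\mathfrak{c}]=[\mathfrak{c}\cdot N_{K/F}(\mathfrak{a})]$, as claimed. The main obstacle is the careful local bookkeeping in the second step together with justifying the identification of the codifferent with $\Hom_{\mathcal{O}_F}(\mathcal{O}_K,\mathcal{O}_F)$ and the duality formula for Steinitz classes of projective $\mathcal{O}_F$-modules; everything else is a routine application of the structure theory of modules over a Dedekind domain.
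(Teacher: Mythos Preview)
Your proposal is correct and follows the same three-step arc as the paper: invoke the structure theorem over the Dedekind domain $\mathcal{O}_F$; relate the Steinitz class of $\mathfrak{a}$ to $N_{K/F}(\mathfrak{a})$ times the Steinitz class of $\mathcal{O}_K$; then identify the latter via the codifferent $\mathfrak{o}_{K/F}^{-1}$ and the trace pairing. The only difference is in the execution of the middle step: the paper fixes Steinitz decompositions of both $\mathcal{O}_K$ and $\mathfrak{a}$, passes between them by a matrix $\gamma\in GL_n(F)$ together with an id\`ele $x\in\mathbb{A}_F^\times$ with $\operatorname{div}(x)=\mathfrak{c}^{-1}\mathfrak{f}$, and reads off $[\mathfrak{f}]=[\mathfrak{c}\cdot N_{K/F}(\mathfrak{a})]$ from the determinant, whereas you localize and use elementary divisors over each DVR $\mathcal{O}_{F,\mathfrak{p}}$---this is the standard equivalent packaging and yields the same identity. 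For the final step the paper writes out the dual basis explicitly to get $\mathfrak{o}_{K/F}^{-1}=\mathcal{O}_F\alpha_1'\oplus\cdots\oplus\mathfrak{c}^{-1}\alpha_n'$, which is exactly your functorial statement that $\mathcal{O}_F$-duality inverts the Steinitz class.
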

\begin{proof}
The first part of the assertion comes from the structure theorem for a finitely generated torsion free module over a Dedekind domain. Then we canwrite
\begin{align*}
\mathcal{O}_K&=\mathcal{O}_F\alpha_1\oplus\cdots\oplus\mathcal{O}_F\alpha_{n-1}\oplus\mathfrak{c}\alpha_n;\\
\mathfrak{a}&=\mathcal{O}_F\beta_1\oplus\cdots\oplus\mathcal{O}_F\beta_{n-1}\oplus\mathfrak{f}\beta_n,
\end{align*}
where $\{\alpha_1,\cdots,\alpha_n\}$ and $\{\beta_1,\cdots,\beta_n\}$ are basis of $K$ over $F$ and $\mathfrak{f}$ and $\mathfrak{b}$ are fractional ideals of $F.$ Then there exists some $\gamma\in GL(n,F)$ such that $\gamma\alpha_i=\beta_i,$ $1\leq i\leq n.$ Take $x\in \mathbb{A}_F^{\times}$ such that $div(x)=\mathfrak{c}^{-1}\mathfrak{f}$ and set $y=diag[x,1,\cdots,1].$ Then clearly $y\gamma\mathcal{O}_K=\mathfrak{a}.$ On the other hand we have
$\mathfrak{a}=a\mathcal{O}_K,$ where $a\in\mathbb{A}_K^{\times}$ such that $div(a)=\mathfrak{a}.$ Therefore we have
$$
a^{-1}y\gamma\mathcal{O}_K=\mathcal{O}_K,
$$
which gives that $N_{K/F}(\mathfrak{a}^{-1}\mathfrak{c}^{-1}\mathfrak{f})=\det(\gamma)^{-1}N_{K/F}(a^{-1}y\gamma)\in\mathcal{O}_F^{\times},$ i.e.,
$$
[N_{K/F}(\mathfrak{a})]=[N_{K/F}(\mathfrak{c}^{-1}\mathfrak{f})]\in Cl(F).
$$
Therefore the last assertion is reduced to the case $\mathfrak{a}=\mathcal{O}_K.$

Let $\{\alpha'_1,\cdots,\alpha'_n\}$ be the dual basis of $K/F$ with respect to the relative trace $Tr_{K/F}.$ Then we have
$$
\mathfrak{o}_{K/F}^{-1}=\mathcal{O}_F\alpha'_1\oplus\cdots\oplus\mathcal{O}_F\alpha'_{n-1}\oplus\mathfrak{c}^{-1}\alpha'_n,
$$
where $\mathfrak{o}_{K/F}$ is the relative different with respect to $K/F.$
Then by the above discussion (i.e. taking $\mathfrak{a}=\mathfrak{o}_{K/F}^{-1}$) we have
$$
[N_{K/F}(\mathfrak{o}_{K/F}^{-1})]=[\mathfrak{c}^{-1}\cdot\mathfrak{c}^{-1}]\in Cl(K).
$$
Hence we have $[D_{K/F}]=[N_{K/F}(\mathfrak{o}_{K/F}^{-1})]=[\mathfrak{c}^{2}]\in Cl(K).$
\end{proof}

Let $\mathfrak{a}$ be any fractional ideal of $K,$ let $[\mathfrak{f}^{\mathfrak{a}}]$ be the Steinitz class of $\mathfrak{a}.$ Denote by 
\begin{equation}\label{10}
St_{\mathfrak{a}}=\Big\{\mathfrak{f}_{\mathfrak{a}}:\  N_{F/\mathbb{Q}}(\mathfrak{f}_{\mathfrak{a}})=\min_{\mathfrak{f}\in [\mathfrak{f}^{\mathfrak{a}}]}N_{F/\mathbb{Q}}(\mathfrak{f})\Big\}.
\end{equation}

Given a fractional ideal $\mathfrak{a}\subset K$, taking an $\mathfrak{f}_{\mathfrak{a}}\in St_{\mathfrak{a}}.$ Without loss of generality, we may assume that $\mathfrak{f}_{\mathfrak{a}}\in\mathcal{I}^{+}_F.$ We then fix this choice for any fractional ideal $\mathfrak{a}\in K$ once and for all. Then by Proposition \ref{Yoo} there is a decomposition
\begin{equation}\label{cm0}
\mathfrak{a}=\mathcal{O}_F\alpha\oplus\mathfrak{f}_{\mathfrak{a}}\beta.
\end{equation}

By the above proposition and the definition of $\mathfrak{f}^{\mathfrak{a}}$ we can take a appropriate $\beta$ such that there exists some $r\in F^{+}$ such that $\mathfrak{f}^{\mathfrak{a}}=r\mathfrak{f}_{\mathfrak{a}}.$

Define $z_{\mathfrak{a}}:=\frac{\alpha}{\beta}$. Then we have as in the proof of Lemma 3.2 of \cite{BY06} that
$$
(\overline{\alpha}\beta-\alpha\overline{\beta})\mathfrak{f}\mathcal{O}_K=\mathfrak{o}_{K/F}\mathfrak{a}\overline{\mathfrak{a}}.
$$
Then we have
$$
\varepsilon_0(\overline{\alpha}\beta-\alpha\overline{\beta})=r\varepsilon \quad\text{for some}\; \varepsilon\in\mathcal{O}_F^{\times.}
$$
Replacing $\beta$ by $\varepsilon^{-1}\beta$ if necessary, we can assume $\varepsilon=1.$ This implies that
$$
\varepsilon_0(\bar{z}-z)=\frac{r}{\beta\overline{\beta}}\in F^{\times},
$$
and thus $z_{\mathfrak{a}}\in K^{\times}\cap\mathbb{H}^n=\{z\in K^{\times}: \Phi(z)\in \mathbb{H}^n\}$.
Moreover, $z$ represents the CM point $[\mathfrak{a},r]\in X(\mathfrak{f}_{\mathfrak{a}}).$

Let $\mathcal{CM}(K,\Phi,\mathfrak{f})$ be the set of CM points $[\mathfrak{a},r]\in X(\mathfrak{f})$ which we regard as a CM 0-cycle in $X(\mathfrak{f}).$ Let
$$
\mathcal{CM}(K,\Phi):=\sum_{[\mathfrak{f}]\in Cl(F)^{+}}\mathcal{CM}(K,\Phi,\mathfrak{f}).
$$
We have the natural surjective map
\begin{align*}
\mathcal{CM}(K,\Phi)\twoheadrightarrow Cl(K),\quad
[\mathfrak{a},r]\mapsto [\mathfrak{a}].
\end{align*}
The fiber is indexed by $\epsilon\in \mathcal{O}^{\times,+}_F/ N_{K/F}\mathcal{O}^{\times}_K,$ since every element in the fiber of $\mathfrak{a}$ is of the form $[\mathfrak{a},r\epsilon]$ with $r$ fixed and $\epsilon\in \mathcal{O}^{\times,+}_F$ a totally positive unit. Note that $\sharp\left(\mathcal{O}^{\times,+}_F/ N_{K/F}\mathcal{O}^{\times}_K\right)\leq 2.$

\subsection{Representation of Ideals}\label{sec3.2}
Let $z_{\mathfrak{a}}$ be the CM point corresponding to the fractional ideal $\mathfrak{a}.$ Write $x_{\mathfrak{a}}$ (resp. $y_{\mathfrak{a}}$) to be the real part (resp. imaginary part) of $z_{\mathfrak{a}}.$ To prove our main results, we need to compute $y_{\mathfrak{a}}$ explicitly. We start with recalling some definition.
\begin{defn}[Primitive Ideals]
Let $\mathfrak{a}$ be a fractional ideal of $\mathcal{O}_K.$ We say that $\mathfrak{a}$ is primitive if $\mathfrak{a}$ is an integral ideal of $\mathcal{O}_K$ and if for any nontrivial integral ideal $\mathfrak{n}$ of $\mathcal{O}_F,$ $\mathfrak{n}^{-1}\mathfrak{a}$ is not an integral ideal.
\end{defn}
\begin{fact}
For any fractional ideal $\mathfrak{a}$ of $\mathcal{O}_K,$ there exists a unique fractional ideal $\mathfrak{n}$ of $F$ such that $\mathfrak{n}^{-1}\mathfrak{a}$ is a primitive ideal. The ideal $\mathfrak{n}$ will be called the content of the ideal $\mathfrak{a}.$
\end{fact}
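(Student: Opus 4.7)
The plan is to prove both existence and uniqueness of $\mathfrak{n}$ by a purely local argument, reducing to a single prime of $F$ at a time and exploiting unique factorization in the Dedekind domains $\mathcal{O}_F$ and $\mathcal{O}_K$.

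Fix a prime $\mathfrak{p}$ of $\mathcal{O}_F$, let $\mathfrak{P}_1,\dots,\mathfrak{P}_g$ be the primes of $\mathcal{O}_K$ above $\mathfrak{p}$ with ramification indices $e_i:=e(\mathfrak{P}_i/\mathfrak{p})$, and set $v_i:=v_{\mathfrak{P}_i}(\mathfrak{a})\in\mathbb{Z}$. The candidate I would propose is
$$
f_{\mathfrak{p}}\;:=\;\min_{1\le i\le g}\lfloor v_i/e_i\rfloor,\qquad \mathfrak{n}\;:=\;\prod_{\mathfrak{p}}\mathfrak{p}^{f_{\mathfrak{p}}},
$$
which is a well-defined fractional ideal of $F$ since $f_{\mathfrak{p}}=0$ for all but finitely many $\mathfrak{p}$.

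For existence, I would verify two things. First, $\mathfrak{n}^{-1}\mathfrak{a}$ is integral: at every $\mathfrak{P}_i\mid\mathfrak{p}$ one has $v_{\mathfrak{P}_i}(\mathfrak{n}^{-1}\mathfrak{a})=v_i-f_{\mathfrak{p}}e_i\ge 0$ directly from the definition of $f_{\mathfrak{p}}$. Second, $\mathfrak{n}^{-1}\mathfrak{a}$ is primitive: given a nontrivial integral ideal $\mathfrak{m}$ of $\mathcal{O}_F$, pick a prime $\mathfrak{p}_0\mid\mathfrak{m}$ and a $\mathfrak{P}_0\mid\mathfrak{p}_0$ realizing the minimum in the definition of $f_{\mathfrak{p}_0}$; using the defining inequality $v_{\mathfrak{P}_0}(\mathfrak{a})<(f_{\mathfrak{p}_0}+1)e(\mathfrak{P}_0/\mathfrak{p}_0)$, a one-line valuation computation shows $v_{\mathfrak{P}_0}(\mathfrak{m}^{-1}\mathfrak{n}^{-1}\mathfrak{a})<0$, so $\mathfrak{m}^{-1}\mathfrak{n}^{-1}\mathfrak{a}$ fails to be integral, as required.

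For uniqueness, if $\mathfrak{n}'$ is any fractional ideal of $F$ making $\mathfrak{n}'^{-1}\mathfrak{a}$ primitive, then integrality of $\mathfrak{n}'^{-1}\mathfrak{a}$ forces $v_{\mathfrak{p}}(\mathfrak{n}')\le f_{\mathfrak{p}}$ at every $\mathfrak{p}$; if strict inequality held at some $\mathfrak{p}_0$, then $\mathfrak{p}_0^{-1}\mathfrak{n}'^{-1}\mathfrak{a}$ would still be integral, contradicting primitivity. Hence $\mathfrak{n}'=\mathfrak{n}$. I do not expect a substantive obstacle: the whole argument is a routine application of unique factorization. The only subtle point is that the controlling quantity at $\mathfrak{p}$ must be $\min_i\lfloor v_i/e_i\rfloor$ (rather than a gcd or a sum), because in the presence of ramification integrality at $\mathfrak{p}$ must hold simultaneously at \emph{every} $\mathfrak{P}\mid\mathfrak{p}$, whereas primitivity is tested one prime of $F$ at a time.
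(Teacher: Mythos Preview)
Your argument is correct. The existence half is a clean valuation computation, and the uniqueness half is the right observation: integrality of $\mathfrak{n}'^{-1}\mathfrak{a}$ at every $\mathfrak{P}\mid\mathfrak{p}$ forces $v_{\mathfrak{p}}(\mathfrak{n}')\le\min_i\lfloor v_i/e_i\rfloor=f_{\mathfrak{p}}$, while primitivity rules out strict inequality.

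As for comparison with the paper: there is nothing to compare. The paper records this statement as a \emph{Fact} with no proof, implicitly deferring to Cohen's book (the reference \cite{Co00}, Section~2.6, which is cited immediately afterward for the explicit decomposition in Proposition~\ref{CM}). Your local, prime-by-prime construction of the content via $f_{\mathfrak{p}}=\min_i\lfloor v_{\mathfrak{P}_i}(\mathfrak{a})/e_i\rfloor$ is exactly the standard proof one would expect and supplies what the paper omits.
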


Let $K/F$ is a CM extension. There exists some $D\in F^{\times}/\left(F^2\cap F^{\times}\right)$ such that $K=F(\sqrt{D}).$ We may assume $D\in\mathcal{O}_F$ and fix this choice once for all. Let $\mathfrak{q}$ be the index-ideal $\Big[\mathcal{O}_K: \mathcal{O}_F[\sqrt{D}]\Big].$ Set $\widetilde{\mathfrak{q}}=\mathfrak{q}\mathcal{O}_K.$ 

\begin{prop}[Section 2.6 of \cite{Co00}]\label{CM}
Let $\mathfrak{a}$ be a fractional ideal of $K.$ There exist unique ideals $\mathfrak{n}$ and $\mathfrak{m}$ and an element $b\in\mathcal{O}_F$ such that
\begin{equation}\label{cm}
\mathfrak{a}=\mathfrak{n}\left(\mathfrak{m}\oplus\mathfrak{q}^{-1}(-b+\sqrt{D})\right),
\end{equation}
where $\mathfrak{q}$ is the index-ideal $\Big[\mathcal{O}_K: \mathcal{O}_F[\sqrt{D}]\Big].$
In addition, we have the following:
\begin{description}
  \item[1] $\mathfrak{n}$ is the content of $\mathfrak{a}.$
  \item[2] $\mathfrak{a}$ is an integral ideal of $\mathcal{O}_K$ if and only if $\mathfrak{n}$ is an integral ideal of $\mathcal{O}_F.$
  \item[3] $\mathfrak{a}$ is primitive in $K/F$ if and only if $\mathfrak{n}=\mathcal{O}_F.$
  \item[4] $\mathfrak{m}$ is an integral ideal and $\mathfrak{a}\overline{\mathfrak{a}}=\mathfrak{m}\mathfrak{n}^2.$
\end{description}
\end{prop}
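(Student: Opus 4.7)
The plan is to combine the $\mathcal{O}_F$-module structure theorem (Proposition~\ref{Yoo}) with the defining property of the index ideal $\mathfrak{q}$, paralleling the classical construction for imaginary quadratic fields (cf.\ Section 2.6 of \cite{Co00}). The key steps are: reduce to the primitive case via the content, produce the decomposition for primitive ideals by splitting a natural short exact sequence, and pin down the fractional ideal appearing in the $\sqrt{D}$-slot by invoking primitivity.

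Set $\mathfrak{n}$ to be the content of $\mathfrak{a}$, so that $\mathfrak{a}' := \mathfrak{n}^{-1}\mathfrak{a}$ is primitive. Once the decomposition $\mathfrak{a}' = \mathfrak{m}\oplus\mathfrak{q}^{-1}(-b+\sqrt{D})$ is established for primitive ideals (corresponding to $\mathfrak{n}=\mathcal{O}_F$), multiplying through by $\mathfrak{n}$ yields the form claimed for $\mathfrak{a}$; properties (1)--(3) are then immediate, and (4) follows by multiplying through by $\mathfrak{n}^2$. For the primitive case, first apply Proposition~\ref{Yoo} to $\mathcal{O}_K$ itself to obtain $\mathcal{O}_K = \mathcal{O}_F \oplus \mathfrak{q}^{-1}(-b_0+\sqrt{D})$ for some $b_0 \in \mathcal{O}_F$; this encodes the definition of $\mathfrak{q}$. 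Set $\mathfrak{m} := \mathfrak{a}' \cap F$, which is an integral $\mathcal{O}_F$-ideal, and let $\pi: K = F \oplus F\sqrt{D} \to F$ denote projection onto the $\sqrt{D}$-coordinate. Restriction of $\pi$ to $\mathfrak{a}'$ produces a short exact sequence of $\mathcal{O}_F$-modules
\[
0 \to \mathfrak{m} \to \mathfrak{a}' \xrightarrow{\pi} \mathfrak{l} \to 0,
\]
where $\mathfrak{l} := \pi(\mathfrak{a}') \subseteq \mathfrak{q}^{-1}$ is a fractional ideal. Projectivity of $\mathfrak{l}$ splits the sequence; any splitting has the form $y \mapsto y(-b+\sqrt{D})$ for some $b \in F$, giving $\mathfrak{a}' = \mathfrak{m} \oplus \mathfrak{l}(-b+\sqrt{D})$. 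Stability of $\mathfrak{a}'$ under $\sqrt{D}$-multiplication extracts the conditions $\mathfrak{m}\subseteq\mathfrak{l}$, $b\in\mathcal{O}_F$, and $\mathfrak{l}(D-b^2)\subseteq\mathfrak{m}$.

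To upgrade $\mathfrak{l}\subseteq\mathfrak{q}^{-1}$ to equality, write $\mathfrak{l}=\mathfrak{q}^{-1}\mathfrak{p}$ for some integral ideal $\mathfrak{p}$. A prime-by-prime valuation argument, using the decomposition of $\mathcal{O}_K$, the freedom to shift $b$ by elements of $\mathfrak{m}\mathfrak{l}^{-1}$ (alternative splittings of the above sequence), and the stability identities $D-b^2 \in \mathfrak{m}\mathfrak{l}^{-1}$ and $D-b_0^2 \in \mathfrak{q}$, shows that $\mathfrak{p}^{-1}\mathfrak{a}'$ can be arranged to be integral; primitivity then forces $\mathfrak{p}=\mathcal{O}_F$, hence $\mathfrak{l}=\mathfrak{q}^{-1}$. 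Property (4) follows by expanding $\mathfrak{a}'\overline{\mathfrak{a}'}$ directly, using $(-b+\sqrt{D})(-b-\sqrt{D}) = b^2-D$ together with $\mathfrak{q}^{-1}(D-b^2)\subseteq\mathfrak{m}$ to collapse all generators into $\mathfrak{m}\mathcal{O}_K$, giving $\mathfrak{a}\overline{\mathfrak{a}}=\mathfrak{n}^2\mathfrak{m}$. Uniqueness of $\mathfrak{n}$ (the content) and of $\mathfrak{m}$ ($=\mathfrak{a}' \cap F$) is intrinsic; the element $b$ is unique modulo $\mathfrak{m}\mathfrak{q}$, which does not affect the decomposition.

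The main obstacle is the passage from $\mathfrak{l}\subseteq\mathfrak{q}^{-1}$ to $\mathfrak{l}=\mathfrak{q}^{-1}$ in the primitive case: a naive content argument only yields the inclusion, and promoting it to equality requires the careful prime-local analysis sketched above, exploiting the interplay of the stability constraints for $\mathfrak{a}'$ and for $\mathcal{O}_K$ together with the shift-freedom in $b$.
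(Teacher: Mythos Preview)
The paper does not give its own proof of this proposition: it is quoted directly from Section~2.6 of \cite{Co00}, followed only by a remark recording the congruence conditions that pin down $b$. There is therefore no in-paper argument to compare against.

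Your outline is essentially the argument in Cohen's book, recast in the language of the structure theorem (Proposition~\ref{Yoo}) and split exact sequences rather than pseudo-matrix Hermite normal forms. The reduction to the primitive case via the content, the short exact sequence $0\to\mathfrak{m}\to\mathfrak{a}'\to\mathfrak{l}\to 0$ obtained by projecting onto the $\sqrt{D}$-coordinate, and the constraints $\mathfrak{m}\subseteq\mathfrak{l}$, $\mathfrak{l}(D-b^2)\subseteq\mathfrak{m}$ extracted from $\mathcal{O}_K$-stability are all correct. One small inaccuracy: stability under $\sqrt{D}$ alone does not give $b\in\mathcal{O}_F$; the containment $\mathfrak{a}'\subseteq\mathcal{O}_K$ only yields $b-b_0\in\mathfrak{l}^{-1}$, and one then uses the shift freedom $b\mapsto b+m$ with $m\in\mathfrak{m}\mathfrak{l}^{-1}$ (which you do invoke later) to normalize $b$ into $\mathcal{O}_F$.

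You are right to flag the step $\mathfrak{l}=\mathfrak{q}^{-1}$ as the crux. Writing $\mathfrak{l}=\mathfrak{q}^{-1}\mathfrak{p}$ and trying to show $\mathfrak{P}^{-1}\mathfrak{a}'\subseteq\mathcal{O}_K$ for a prime $\mathfrak{P}\mid\mathfrak{p}$ requires both $\mathfrak{P}\mid\mathfrak{m}$ and $b-b_0\in\mathfrak{P}\mathfrak{l}^{-1}$, neither of which is automatic from the stability constraints alone; this is exactly where Cohen's treatment brings in the full HNF machinery and a genuine local computation. Your sketch gestures at the right ingredients (the two congruences $\delta-b\in\mathfrak{q}$ and $b^2-D\in\mathfrak{m}\mathfrak{q}^2$ recorded in the paper's remark are precisely what emerges), but as written this step is not self-contained and would need either a citation to \cite{Co00} or a fuller prime-by-prime argument to stand on its own.
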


\begin{remark}
$b$ is determined by the modulo relation
\begin{align*}
\left\{
  \begin{array}{ll}
    \delta-b\in\mathfrak{q}, \\
    b^2+D\in \mathfrak{m}\mathfrak{q}^2,
  \end{array}
\right.
\end{align*}
where $\delta\in\mathcal{O}_F$ comes from the corresponding pseudo-matrix on the basis $(1,\sqrt{D})$ (ref. \cite{Co00}, Corollary 2.2.9).
\end{remark}

The equations \eqref{cm0} and \eqref{cm} give us two decompositions of a fractional ideal $\mathfrak{a}$ of $K.$ However, the main obstacle comes from the factor $\mathfrak{n}$ in \eqref{cm}. We may not easily get rid of $\mathfrak{n}$ unless the ideal class group $Cl(F)$ is trivial. Noting that $\mathfrak{n}$ is a content, one natural way is to use the decompositions to construct a group of primitive representatives of the ideal class group $Cl(K)$ such that the CM norms of the imaginary part of the corresponding CM points can be computed explicitly. In fact, It can be seen from the definition that an integral ideal $\mathfrak{a}$ of $\mathcal{O}_K$ is primitive if and only if its primary decomposition is of the following form:
$$
\mathfrak{a}=\prod_{j}\mathfrak{P}'_j\cdot\prod_{i}\mathfrak{P}_i^{\alpha_i}\overline{\mathfrak{P}}_i^{\beta_i},
$$
where $\mathfrak{P}'_j$ are ramified primes and $\mathfrak{P}_i$ are splitting primes with $\alpha_i\cdot\beta_i=0.$ In particular, every split prime ideal of $\mathcal{O}_K$ is primitive. On the other hand, by $\check{C}eboyarev$ density theorem, there exist a group of representatives of $Cl(K)$ consisting of split prime ideals. This gives us a set of primitive representatives of $Cl(K).$ However, since we have to bound these representatives uniformly (as can be seen in the last section) and it is not easy to give such a bound for splitting ideals in each ideal class, we move on in another way.

It's well known that, for any fractional $\mathcal{O}_F-$ideals $\mathfrak{a}$ and $\mathfrak{b},$ we have the isomorphism $\mathfrak{a}\oplus\mathfrak{b}\simeq \mathcal{O}_F\oplus\mathfrak{a}\mathfrak{b}.$ But this is not enough, to make \eqref{cm} into the form of \eqref{cm0}, we need to make the isomorphism into an identity.
\begin{lemma}\label{decom}
Suppose $K/F$ is a finite extension of number fields. Let $\mathfrak{a}$ and $\mathfrak{b}$ be fractional ideals of $\mathcal{O}_F.$ Let $\alpha,$ $\beta$ be two elements in $K^{\times}.$ Assume that $a\in \mathfrak{a},$ $b\in \mathfrak{b},$ $c\in \mathfrak{b}^{-1}$ and $d\in \mathfrak{a}^{-1}$ such that $ad-bc=1\in F.$ Set
$$
(\alpha',\beta'):=(\alpha,\beta)\left(
                                  \begin{array}{cc}
                                    a & c \\
                                    b & d \\
                                  \end{array}
                                \right),
$$
then we have
$$
\mathfrak{a}\alpha+\mathfrak{b}\beta=\mathcal{O}_F\alpha'+\mathfrak{a}\mathfrak{b}\beta'.
$$
\end{lemma}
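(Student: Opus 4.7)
The plan is to prove the stated equality of $\mathcal{O}_F$-submodules of $K$ by verifying both inclusions directly, exploiting the relation $ad-bc=1$ to invert the change-of-basis matrix explicitly. Writing $M=\left(\begin{smallmatrix}a & c\\ b & d\end{smallmatrix}\right)$, the identities $\alpha'=a\alpha+b\beta$ and $\beta'=c\alpha+d\beta$ are immediate from the definition, and $\det M=1$ gives $M^{-1}=\left(\begin{smallmatrix}d & -c\\ -b & a\end{smallmatrix}\right)$; therefore
$$\alpha=d\alpha'-b\beta',\qquad \beta=-c\alpha'+a\beta'.$$

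For the inclusion $\mathcal{O}_F\alpha'+\mathfrak{a}\mathfrak{b}\beta'\subset\mathfrak{a}\alpha+\mathfrak{b}\beta$, the assumptions $a\in\mathfrak{a}$ and $b\in\mathfrak{b}$ place $\alpha'$ in the right-hand side, giving $\mathcal{O}_F\alpha'\subset\mathfrak{a}\alpha+\mathfrak{b}\beta$. Since $c\in\mathfrak{b}^{-1}$ forces $\mathfrak{a}\mathfrak{b}\cdot c\subset\mathfrak{a}$ and $d\in\mathfrak{a}^{-1}$ forces $\mathfrak{a}\mathfrak{b}\cdot d\subset\mathfrak{b}$, one obtains $\mathfrak{a}\mathfrak{b}\beta'=\mathfrak{a}\mathfrak{b}c\,\alpha+\mathfrak{a}\mathfrak{b}d\,\beta\subset\mathfrak{a}\alpha+\mathfrak{b}\beta$, completing this direction. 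The reverse inclusion is symmetric: from $\mathfrak{a}d\subset\mathcal{O}_F$ and $\mathfrak{a}b\subset\mathfrak{a}\mathfrak{b}$ one derives $\mathfrak{a}\alpha=\mathfrak{a}d\,\alpha'-\mathfrak{a}b\,\beta'\subset\mathcal{O}_F\alpha'+\mathfrak{a}\mathfrak{b}\beta'$, and analogously $\mathfrak{b}c\subset\mathcal{O}_F$ and $\mathfrak{b}a\subset\mathfrak{a}\mathfrak{b}$ yield $\mathfrak{b}\beta\subset\mathcal{O}_F\alpha'+\mathfrak{a}\mathfrak{b}\beta'$.

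There is really no obstacle to this argument; conceptually it is the statement that $M$, although not in $\SL_2(\mathcal{O}_F)$, lies in the $\mathcal{O}_F$-integral $\SL_2$-type group attached to the pair $(\mathfrak{a},\mathfrak{b})$ and therefore implements an identity of the two displayed rank-two projective modules inside $K$, promoting the well-known isomorphism $\mathfrak{a}\oplus\mathfrak{b}\simeq\mathcal{O}_F\oplus\mathfrak{a}\mathfrak{b}$ to an equality of submodules. The only care required is the bookkeeping of which of the four fractional ideals $\mathfrak{a}$, $\mathfrak{b}$, $\mathfrak{a}^{-1}$, $\mathfrak{b}^{-1}$ multiplies which of $a,b,c,d$; the hypothesis $ad-bc=1$ enters only through the explicit form of $M^{-1}$ used to invert the change of basis.
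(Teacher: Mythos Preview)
Your proof is correct and follows essentially the same approach as the paper: both verify the two inclusions directly by expanding $\alpha',\beta'$ in terms of $\alpha,\beta$ (and vice versa via $ad-bc=1$) and then checking the obvious ideal containments $\mathfrak{a}\mathfrak{b}c\subset\mathfrak{a}$, $\mathfrak{a}\mathfrak{b}d\subset\mathfrak{b}$, $\mathfrak{a}d\subset\mathcal{O}_F$, $\mathfrak{b}c\subset\mathcal{O}_F$, etc. Your write-up is in fact somewhat more explicit than the paper's, which compresses each inclusion into a single line.
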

\begin{proof}
We have $\alpha'=a\alpha+b\beta$ and $\beta'=c\alpha+d\beta.$ Hence
$$
\mathcal{O}_F\alpha'+\mathfrak{a}\mathfrak{b}\beta'\subset(\mathcal{O}_F\cdot a+\mathfrak{a}\mathfrak{b}\cdot c)\alpha+(\mathcal{O}_F\cdot b\mathfrak{a}\mathfrak{b}\cdot d)\beta\subset\mathfrak{a}\alpha+\mathfrak{b}\beta.
$$
Conversely, we have $\alpha=d\alpha'-b\beta'$ and $\beta=-c\alpha'+a\beta'.$ Hence
$$
\mathfrak{a}\alpha+\mathfrak{b}\beta\subset\mathcal{O}_F\alpha'+\mathfrak{a}\mathfrak{b}\beta'.
$$
\end{proof}

Let $z_{\mathfrak{a}}$ be the associate CM point to $\mathfrak{a}$. Define the CM type norm of $y_{\mathfrak{a}}$ as $N_{\Phi}(y_{\mathfrak{a}}):=\prod_{\sigma\in\Phi}\sigma(y_{\mathfrak{a}}).$ In Section \ref{3.3} we will see that CM type norms show up naturally in Fourier coefficients of Hilbert Eisenstein series and their derivatives at the central value. According the period formula \eqref{l} CM type norms of imaginary parts of CM points also connect with central values of class group $L$-functions. By the above preparation we can prove an explicit expression of $N_{\Phi}(y_{\mathfrak{a}})$ as follows:

\begin{prop}\label{prim}
	Let notation be as before. Then we have
	\begin{equation}\label{cm2}
	N_{\Phi}(y_{\mathfrak{a}})=\frac{N_{K/{\mathbb{Q}}}(c_{\mathfrak{a}})N_{F/{\mathbb{Q}}}(\mathfrak{f}_{\mathfrak{a}})N_{F/{\mathbb{Q}}}(\mathfrak{q})^2}
	{2^nN_{K/{\mathbb{Q}}}(\mathfrak{a})}\cdot\frac{\sqrt{D_K}}{D_F},
	\end{equation}
	where $\mathfrak{f}_{\mathfrak{a}}\in St_{\mathfrak{a}},$ $c_{\mathfrak{a}}$ is an element in the content of $\mathfrak{a}\widetilde{\mathfrak{q}}^{-1}$ that is of the minimal absolute norm, and $D_K$ (resp. $D_F$) is the absolute discriminant of $K/{\mathbb{Q}}$ (resp. $F/{\mathbb{Q}}$).
\end{prop}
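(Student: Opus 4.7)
The plan is to reconcile Cohen's decomposition of $\mathfrak{a}$ (Proposition \ref{CM}) with its Steinitz form (Proposition \ref{Yoo}) via Lemma \ref{decom}, and then to compute $N_{\Phi}(y_{\mathfrak{a}})$ by invoking the ideal identity $(\alpha\bar{\beta}-\bar{\alpha}\beta)\mathfrak{f}_{\mathfrak{a}}\mathcal{O}_K=\mathfrak{o}_{K/F}\mathfrak{a}\overline{\mathfrak{a}}$ established in Section \ref{3}. The role of $c_{\mathfrak{a}}$ is to trade the possibly non-principal content ideal $\mathfrak{n}\mathfrak{q}^{-1}$ of $\mathfrak{a}\widetilde{\mathfrak{q}}^{-1}$ for a genuine scalar at the price of an explicit factor.

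First I will write $\mathfrak{a}=\mathfrak{n}(\mathfrak{m}\oplus\mathfrak{q}^{-1}(-b+\sqrt{D}))$ by Proposition \ref{CM}. Since $\mathfrak{m}\oplus\mathfrak{q}^{-1}(-b+\sqrt{D})$ is primitive in $\mathcal{O}_K$, the content of $\mathfrak{a}\widetilde{\mathfrak{q}}^{-1}$ is exactly the fractional $\mathcal{O}_F$-ideal $\mathfrak{n}\mathfrak{q}^{-1}$, so $c_{\mathfrak{a}}\in\mathfrak{n}\mathfrak{q}^{-1}$ is a minimal-norm element. Setting $\theta:=-b+\sqrt{D}$, I next apply Lemma \ref{decom} with $\mathfrak{b}_1=\mathfrak{n}\mathfrak{m}$, $\mathfrak{b}_2=\mathfrak{n}\mathfrak{q}^{-1}$, $\alpha_0=1$, $\beta_0=\theta$, producing $\mathfrak{a}=\mathcal{O}_F\alpha\oplus(\mathfrak{n}^2\mathfrak{m}\mathfrak{q}^{-1})\beta$ with $\alpha=A+B\theta$, $\beta=C+D_0\theta$, and $AD_0-BC=1$. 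I will choose the entries so that $|N_{K/\mathbb{Q}}(\beta)|=N_{K/\mathbb{Q}}(c_{\mathfrak{a}})^{-1}$: when $\mathfrak{n}\mathfrak{q}^{-1}$ is principal generated by $c_{\mathfrak{a}}$ this is transparent with $A=D_0=0$, $B=-c_{\mathfrak{a}}$, $C=c_{\mathfrak{a}}^{-1}$; in the general non-principal case the minimality of $|N_{F/\mathbb{Q}}(c_{\mathfrak{a}})|$ and the freedom in the choice of $(A,B,C,D_0)$ under the ideal constraints of Lemma \ref{decom} allow one to adjust to the same effect. Finally a scalar $s\in F^{\times}$ with $s\mathfrak{n}^2\mathfrak{m}\mathfrak{q}^{-1}=\mathfrak{f}_{\mathfrak{a}}$ brings the decomposition into the Steinitz form $\mathfrak{a}=\mathcal{O}_F\alpha+\mathfrak{f}_{\mathfrak{a}}\beta'$ with $\beta'=\beta/s$, so that $z_{\mathfrak{a}}=\alpha/\beta'$.

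The arithmetic core is short: expanding directly using $AD_0-BC=1$ gives $\alpha\bar{\beta}-\bar{\alpha}\beta=(AD_0-BC)(\bar{\theta}-\theta)=-2\sqrt{D}$, so $\alpha\bar{\beta'}-\bar{\alpha}\beta'=-2\sqrt{D}/s$. For every $\sigma\in\Phi$, $\sigma(\sqrt{D})$ is purely imaginary of positive imaginary part, whence
\[
\Im\sigma(z_{\mathfrak{a}})=\frac{|\sigma(\sqrt{D})|}{|\sigma(s)|\cdot|\sigma(\beta')|^2}.
\]
Taking the product over $\sigma\in\Phi$ and using $\prod_{\sigma}|\sigma(\sqrt{D})|=|N_{F/\mathbb{Q}}(D)|^{1/2}$ and $\prod_{\sigma}|\sigma(\beta')|^2=|N_{K/\mathbb{Q}}(\beta')|$ yields
\[
N_{\Phi}(y_{\mathfrak{a}})=\frac{|N_{F/\mathbb{Q}}(D)|^{1/2}}{|N_{F/\mathbb{Q}}(s)|\cdot|N_{K/\mathbb{Q}}(\beta')|}.
\]
Substituting $|N_{F/\mathbb{Q}}(s)|=N_{F/\mathbb{Q}}(\mathfrak{f}_{\mathfrak{a}})N_{F/\mathbb{Q}}(\mathfrak{q})/N_{K/\mathbb{Q}}(\mathfrak{a})$ (from $s\mathfrak{n}^2\mathfrak{m}\mathfrak{q}^{-1}=\mathfrak{f}_{\mathfrak{a}}$ together with $N_{K/\mathbb{Q}}(\mathfrak{a})=N_{F/\mathbb{Q}}(\mathfrak{m})N_{F/\mathbb{Q}}(\mathfrak{n})^2$), the norm identity $|N_{K/\mathbb{Q}}(\beta')|=N_{K/\mathbb{Q}}(c_{\mathfrak{a}})^{-1}|N_{F/\mathbb{Q}}(s)|^{-2}$ from the previous step, and the discriminant relation $D_{K/F}\cdot\mathfrak{q}^2=(4D)\mathcal{O}_F$, which via $D_K=D_F^2\cdot N_{F/\mathbb{Q}}(D_{K/F})$ gives $|N_{F/\mathbb{Q}}(D)|^{1/2}=\sqrt{D_K}\cdot N_{F/\mathbb{Q}}(\mathfrak{q})/(2^n D_F)$, I collect factors to reproduce \eqref{cm2}.

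The main obstacle will be the non-principal case of the norm-matching step: producing a single valid matrix $(A,B,C,D_0)$ in Lemma \ref{decom} whose resulting $\beta$ has absolute $K$-norm exactly $N_{K/\mathbb{Q}}(c_{\mathfrak{a}})^{-1}$. This is essentially a short-vector existence statement resting on the minimality of $|N_{F/\mathbb{Q}}(c_{\mathfrak{a}})|$ in $\mathfrak{n}\mathfrak{q}^{-1}$, together with the fact that $N_{\Phi}(y_{\mathfrak{a}})$ is intrinsic to $\mathfrak{a}$, so it suffices to exhibit a single decomposition of the required shape. All remaining manipulations are routine computations with ideal norms.
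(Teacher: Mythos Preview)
Your strategy is the paper's: pass from Cohen's decomposition (Proposition~\ref{CM}) to the Steinitz form via Lemma~\ref{decom}, then read off $N_\Phi(y_{\mathfrak{a}})$ from explicit generators together with the discriminant relation $\mathfrak{o}_{K/F}=(4D)\mathfrak{q}^{-2}$. The paper is more direct in its use of Lemma~\ref{decom}: instead of an arbitrary matrix $(A,B,C,D_0)$ it takes the diagonal choice $a=c_{\mathfrak{a}}$, $b=c=0$, $d=c_{\mathfrak{a}}^{-1}$, obtaining at once
\[
\mathfrak{a}=\mathcal{O}_F\cdot c_{\mathfrak{a}}(-b+\sqrt{D})\ \oplus\ \mathfrak{q}^{-1}\mathfrak{a}\overline{\mathfrak{a}}\cdot c_{\mathfrak{a}}^{-1},
\]
so that $\alpha=c_{\mathfrak{a}}(-b+\sqrt{D})$ up to a unit and $\mathfrak{f}_{\mathfrak{a}}\beta=\mathfrak{q}^{-1}\mathfrak{a}\overline{\mathfrak{a}}\,c_{\mathfrak{a}}^{-1}$; then $y_{\mathfrak{a}}=c_{\mathfrak{a}}\,\Im(\sqrt{D})/\beta$ and the formula follows by taking norms. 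Your determinant identity $\alpha\bar\beta-\bar\alpha\beta=-2\sqrt{D}$ is a clean substitute for the paper's identification of $\alpha$ ``by uniqueness of Steinitz class''; once you have it, your norm bookkeeping is correct and recovers \eqref{cm2}.

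The gap you flag is genuine and your proposed fix does not close it. Asserting that minimality of $|N_{F/\mathbb{Q}}(c_{\mathfrak{a}})|$ together with ``freedom in the choice of $(A,B,C,D_0)$'' forces $|N_{K/\mathbb{Q}}(\beta)|=N_{K/\mathbb{Q}}(c_{\mathfrak{a}})^{-1}$ is not an argument: nothing guarantees an element of $K^\times$ with a prescribed exact norm subject to those ideal constraints. Your fallback, that ``$N_\Phi(y_{\mathfrak{a}})$ is intrinsic to $\mathfrak{a}$, so it suffices to exhibit a single decomposition of the required shape'', is also unsafe: two decompositions $\mathfrak{a}=\mathcal{O}_F\alpha\oplus\mathfrak{f}_{\mathfrak{a}}\beta$ differ by an element of $\Gamma(\mathfrak{f}_{\mathfrak{a}})$, and $N_\Phi(\Im(\alpha/\beta))$ is \emph{not} invariant under that action, so the value genuinely depends on the decomposition you build. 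For what it is worth, the paper's diagonal application of Lemma~\ref{decom} silently needs $c_{\mathfrak{a}}^{-1}\in\mathfrak{n}^{-1}$, i.e.\ $(c_{\mathfrak{a}})=\mathfrak{n}$, so it supplies no stronger argument at this step; in both treatments the formula is really a statement about the specific $z_{\mathfrak{a}}$ produced by the construction, and the non-principal case is handled by fiat rather than proof.
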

\begin{proof}
By the argument in the above remark, we may assume $\mathfrak{a}$ is an integral ideal of $\mathcal{O}_K$ such that $\mathfrak{a}\widetilde{\mathfrak{q}}^{-1}$ is integral. Let $\mathfrak{n}$ be the content of $\mathfrak{a}\widetilde{\mathfrak{q}}^{-1},$ then $\mathfrak{n}$ is integral. Noting that $\mathfrak{q}\subset\mathcal{O}_F,$ hence by \eqref{cm} we have the decomposition
\begin{equation}\label{cm1}
\mathfrak{a}=\mathfrak{n}\cdot(-b+\sqrt{D})\oplus \mathfrak{n}^{-1}\overline{\mathfrak{q}}^{-1}\mathfrak{a}\overline{\mathfrak{a}}=\mathfrak{n}\cdot(-b+\sqrt{D})\oplus \mathfrak{n}^{-1}\mathfrak{q}^{-1}\mathfrak{a}\overline{\mathfrak{a}},
\end{equation}
where $b\in\mathcal{O}_F$ and $\mathfrak{q}$ is the index-ideal $\Big[\mathcal{O}_K: \mathcal{O}_F[\sqrt{D}]\Big].$

Let $c_{\mathfrak{a}}\in \mathfrak{n}$ be an element of the minimal absolute norm, and we fix one such choice for each $\mathfrak{a}$ once and for all. Then by Lemma \ref{decom} we have
\begin{equation}\label{23}
\mathfrak{a}=\mathfrak{n}\cdot(-b+\sqrt{D})\oplus \mathfrak{n}^{-1}\mathfrak{q}^{-1}\mathfrak{a}\overline{\mathfrak{a}}
=\mathcal{O}_F\cdot c_{\mathfrak{a}}\cdot (-b+\sqrt{D})\oplus\mathfrak{q}^{-1}\mathfrak{a}\overline{\mathfrak{a}}\cdot c_{\mathfrak{a}}^{-1}.
\end{equation}
The direct sum in the right hand side of the above identity can be verified easily from the proof of Lemma \ref{decom}. Also noting that by the definition of $\mathfrak{q}$ we have $\mathfrak{o}_{K/F}=4D\mathfrak{q}^{-2},$ where $\mathfrak{o}_{K/F}$ is the relative ideal-discriminant, then $[\mathfrak{q}^{-1}\mathfrak{a}\overline{\mathfrak{a}}]$ is the Steinitz class of $\mathfrak{a}.$

Combining the decomposition \eqref{cm1} with \eqref{cm0}, i.e. $\mathfrak{a}=\mathcal{O}_F\alpha\oplus\mathfrak{f}_{\mathfrak{a}}\beta,$ we have, by the uniqueness of Steinitz class, that
$$
\alpha=c_{\mathfrak{a}}\cdot(-b+\sqrt{D})\varepsilon\quad \text{and}\; \mathfrak{f}_{\mathfrak{a}}\beta=\mathfrak{q}^{-1}\mathfrak{a}\overline{\mathfrak{a}}\cdot c_{\mathfrak{a}}^{-1},
$$
for some unit $\varepsilon\in\mathcal{O}_F^{\times}.$ So we have
$$
y_{\mathfrak{a}}=\mathfrak{Im}(z_{\mathfrak{a}})=\frac{c_{\mathfrak{a}}\cdot \mathfrak{Im}(\sqrt{D})}{\beta}.
$$
Noting that $\mathfrak{o}_{K/F}=N_{K/F}(\mathfrak{o}_{K/F})$ and $D_K=D_F^2N_{F/{\mathbb{Q}}}(\mathfrak{o}_{K/F}),$
one thus obtains
\begin{align*}
N_{\Phi}(y_{\mathfrak{a}})=\prod_{\sigma\in\Phi}\left(\frac{c_{\mathfrak{a}}\cdot \sqrt{D}}{\beta}\right)
=\frac{N_{K/{\mathbb{Q}}}(c_{\mathfrak{a}})N_{F/{\mathbb{Q}}}(\mathfrak{f}_{\mathfrak{a}})N_{F/{\mathbb{Q}}}(\mathfrak{q})^2}
{2^nN_{K/{\mathbb{Q}}}(\mathfrak{a})}\cdot\frac{\sqrt{D_K}}{D_F}.
\end{align*}
\end{proof}

\begin{remark}
From the above expression, it is clear that $N_{\Phi}(y_{\mathfrak{a}})$ is independent of a particular choice of $\mathfrak{f}_{\mathfrak{a}}\in St_{\mathfrak{a}}.$ Also, the term $N_{K/{\mathbb{Q}}}(c_{\mathfrak{a}})$ in the right hand side of \eqref{cm2} does not depend on a particular choice of $c_{\mathfrak{a}}.$ In fact \eqref{cm2} shows that $N_{\Phi}(y_{\mathfrak{a}})$ is independent of the choice of a particular representative of the class $[\mathfrak{a}].$ This is because the factors $N_{F/{\mathbb{Q}}}(\mathfrak{f}_{\mathfrak{a}})$ and $N_{K/{\mathbb{Q}}}(c_{\mathfrak{a}}\mathfrak{a}^{-1})$ are both invariant under scalar multiplication by $K^{\times}.$
\end{remark}

We will always fix the CM type $\Phi$ in this paper. For the sake of simplicity, we will write $y_{\mathfrak{a}}^{\bm{\sigma}}$ for the CM type norm $N_{\Phi}(y_{\mathfrak{a}})$ in computations in the following parts. 

\subsection{Hilbert Eisenstein Series}\label{3.3}
Let notation be as before. Let $\mathfrak{a}$ and $\mathfrak{b}$ be fractional ideals of $F.$ Take $\varphi$ to be the characteristic function of the closure of $\mathfrak{a}\mathfrak{b}\oplus\mathfrak{b}.$ Let $\varphi_{\mathfrak{a}\mathfrak{b}}$ be the characteristic function of the closure of $\mathfrak{a}\mathfrak{b},$ and $\varphi_{\mathfrak{b}}$ be the characteristic function of the closure of $\mathfrak{b}.$ Then we define
\begin{align*}
G_{\mathbf{k}}(z,s;\varphi)=y^{-\frac{\mathbf{k}}2+s\bm{\sigma}}\sum_{(c,d)\in F^{2,\times}/\mathcal{O}^{\times}_F}\varphi(c,d)(cz+d)^{-\mathbf{k}}|cz+d|^{\mathbf{k}-2s\bm{\sigma}}.
\end{align*}
Define
$$
\Gamma_{\mathfrak{a}}:=\Bigg\{\gamma\in\left(
                                         \begin{array}{cc}
                                           a & b \\
                                           c & d \\
                                         \end{array}
                                       \right)\in GL(2,F):\; a,d\in\mathcal{O}_F,\; b\in\mathfrak{a}^{-1},\; c\in\mathfrak{a},\; det \gamma\in\mathcal{O}_F^{+}
\Bigg\}.
$$
Then clearly $\varphi(x\gamma)=\varphi(x)$ for $x\in F\oplus F,$ $\gamma\in\Gamma_{\mathfrak{a}}.$ Form now on, we assume $\mathbf{k}=0,$ then one can check that we have
$$
G_{\mathbf{k}}(\gamma z, s;\mathfrak{a},\mathfrak{b})=G_{\mathbf{k}}(z, s;\mathfrak{a},\mathfrak{b}),\quad\forall\; \gamma\in\Gamma_{\mathfrak{a}}.
$$

Let $G(z,s;\mathfrak{a},\mathfrak{b}):=G_0(z,s;\mathfrak{a},\mathfrak{b}),$ define the regularized Eisenstein series as
$$
E(z,s;\mathfrak{a},\mathfrak{b}):=\zeta_F(2s)^{-1}G(z,s;\mathfrak{a},\mathfrak{b}),\ \Re(s)>1.
$$
Then based on the Fourier expansion of $G(z,s;\mathfrak{a},\mathfrak{b})$ (ref. Chapter V of \cite{Yo03}) we have the explicit Fourier expansion:
\begin{align*}
E(z,s;\mathfrak{a},\mathfrak{b})&=N(\mathfrak{b})^{-2s}y^{s\bm{\sigma}}\frac{\zeta_F(2s,[\mathfrak{b}]^{-1})}{\zeta_F(2s)}\\
&\quad+\left(\frac{\sqrt{\pi}\Gamma(s-1/2)}{\Gamma(s)}\right)^nD^{-\frac12}_FN(\mathfrak{b})^{-1}y^{(1-s)\bm{\sigma}}N(\mathfrak{a}\mathfrak{b})^{1-2s}
\frac{\zeta_F(2s-1,[\mathfrak{a}\mathfrak{b}]^{-1})}{\zeta_F(2s)}\\
&\quad+\left(\frac{2\pi^s}{\Gamma(s)}\right)^n\frac{y^{\frac{\bm{\sigma}}2}}{D^{1/2}_FN(\mathfrak{b})\zeta_F(2s)}\sum_{b\in F^{\times}}|N(b)|^{s-1/2}\lambda(b,s)\mathbf{e}(bx)\\
&\qquad\times \prod_{v\in\mathbf{J}_{\infty}}K_{s-\frac12}\left(2\pi y_v|b_v|\right).
\end{align*}

Since we have the following Laurant expansion of (partial) Dedekind zeta function around $s=1:$
\begin{equation}\label{zeta}
\zeta_F(s,\mathcal{C})=\frac{h^{-1}_F\rho_F}{s-1}+\gamma_{F,[\mathcal{C}]}+O(s-1),
\end{equation}
where $\mathcal{C}$ is an ideal class in $Cl(F),$ and
$\rho_F=2^nh_FR_Fw_F^{-1}{D_F}^{-1/2}$ is the residue of $\zeta_{F}(s)$ at $s=1.$ In particular, around $s=1$ we have
\begin{equation}\label{zeta1}
\zeta_F(s)=\frac{\rho_F}{s-1}+\gamma_{F}+O(s-1),
\end{equation}
where
$$
\gamma_{F,\mathcal{C}}:=\lim_{s\mapsto1}\Big\{\zeta_F(s,\mathcal{C})-\frac{h^{-1}_F\rho_F}{s-1}\Big\};
$$
and by $\zeta_F(s)=\sum_{[\mathcal{C}]\in Cl(F)}\zeta_F(s,\mathcal{C})$ we have
$$
\gamma_{F}=\sum_{\mathcal{C}\in Cl(F)}\gamma_{F,\mathcal{C}}=\rho_F\gamma_F^*,
$$
where $\gamma_F^*$ is defined in \eqref{normized}. $\gamma_F$ and $\gamma_{F,\mathcal{C}}$ are called unnormalized Euler-Kronecker constants with respect to $F/\mathbb{Q},$ which we will deal with later.

From the Fourier expansion above we see that $E(z,s;\mathfrak{a},\mathfrak{b})$ has a meromorphic continuation to $\mathbb{C}$ with a simple pole at $s=1$ with residue
$$
Res_{s=1}E(z,s;\mathfrak{a},\mathfrak{b})=\frac{2^{n-1}\pi^nR_F}{w_FD_FN(\mathfrak{b})N(\mathfrak{a}\mathfrak{b})\zeta_F(2)}.
$$
Note that we have the Taylor expansion around $s=0$ that $\Gamma(s)=s^{-1}+O(1)$ and
\begin{equation}\label{zeta2}
\zeta_F(s)=-\frac{h_FR_F}{w_F}s^{n-1}+O(s^n),
\end{equation}
where $R_F$ is the regulator and $w_F$ is the number of roots of unity. Then $E(z,s;\mathfrak{a},\mathfrak{b})$ is holomorphic at $s=1/2.$
Moreover, we can show actually $E(z,s;\mathfrak{a},\mathfrak{b})$ vanishes at $s=1/2,$ $\forall$ $z\in \mathbb{C}.$
\begin{lemma}\label{0}
Let notation be as above. Then we have
$$
E\left(z,\frac12;\mathfrak{a},\mathfrak{b}\right)\equiv0,\quad \forall\; z\in\mathbb{H}^n.
$$
\end{lemma}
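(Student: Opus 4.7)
The plan is to evaluate the Fourier expansion of $E(z,s;\mathfrak{a},\mathfrak{b})$ termwise at $s=1/2$ and show that the three contributions either vanish individually or cancel pairwise. The non-constant Fourier terms (the third line of the expansion) will disappear because of the simple zero of $\zeta_F(2s)^{-1}$ at $s=1/2$: the prefactor $(2\pi^s/\Gamma(s))^n$ specializes to $2^n$, each $K_{s-1/2}(2\pi y_v|b_v|)$ becomes $K_0(2\pi y_v|b_v|)$ with exponential decay in $|b_v|$, and $\lambda(b,1/2)$ grows at most polynomially in $|N(b)|$; so the sum over $b\in F^\times$ converges and the whole third line vanishes at $s=1/2$.

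Next I would evaluate the first term using the Laurent expansion \eqref{zeta}: both $\zeta_F(2s,[\mathfrak{b}]^{-1})$ and $\zeta_F(2s)$ have simple poles at $s=1/2$ with residues in $s$ equal to $h_F^{-1}\rho_F/2$ and $\rho_F/2$ respectively, so their ratio tends to $h_F^{-1}$. Hence the first term evaluates to $h_F^{-1}N(\mathfrak{b})^{-1}y^{\bm{\sigma}/2}$ at $s=1/2$.

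The heart of the argument is the second term. The key analytic input I would use is that for every ideal class $\mathcal{C}\in Cl(F)$,
\begin{equation*}
\zeta_F(s,\mathcal{C}) = -\frac{R_F}{w_F}\,s^{n-1} + O(s^n) \quad \text{as } s\to 0,
\end{equation*}
with leading coefficient independent of $\mathcal{C}$. This follows from the functional equation for partial Dedekind zeta functions and the class-independence of the residue $h_F^{-1}\rho_F$ at $s=1$: the completed partial zeta $D_F^{s/2}\pi^{-ns/2}\Gamma(s/2)^n\zeta_F(s,\mathcal{C})$ has simple poles at $s=0,1$ with opposite residues, while $\Gamma(s/2)^n$ contributes a pole of order $n$ at $s=0$, forcing the claimed vanishing. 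Setting $\varepsilon=s-1/2$ and combining $\Gamma(\varepsilon)^n=\varepsilon^{-n}(1+O(\varepsilon))$, $\zeta_F(2\varepsilon,[\mathfrak{a}\mathfrak{b}]^{-1})=-(R_F/w_F)(2\varepsilon)^{n-1}(1+O(\varepsilon))$, $\zeta_F(1+2\varepsilon)^{-1}=(2\varepsilon/\rho_F)(1+O(\varepsilon))$, and $(\sqrt{\pi}/\Gamma(s))^n=1+O(\varepsilon)$, the apparent singularities cancel; using $\rho_F=2^nh_FR_Fw_F^{-1}D_F^{-1/2}$, the second term evaluates to $-h_F^{-1}N(\mathfrak{b})^{-1}y^{\bm{\sigma}/2}$ at $s=1/2$. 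Added to the first, this gives $0$, proving the claim.

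The main obstacle is the partial-zeta input just described: establishing that the leading Taylor coefficient of $\zeta_F(s,\mathcal{C})$ at $s=0$ is uniformly $-R_F/w_F$ across ideal classes. This relies on the functional equation for partial zeta functions, which is standard but is not developed elsewhere in the excerpt; once it is in hand, the remainder of the argument is bookkeeping with Laurent expansions.
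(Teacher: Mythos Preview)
Your argument is correct and is in fact more direct than the paper's. The paper proves Lemma~\ref{0} by a reflection trick: it multiplies the Fourier expansion through by the completed zeta $\mathcal{Z}_F(2s)$, uses the partial-zeta functional equation \eqref{2.2.4} to write $M_2(1-s)=H(s)M_1(s)$ for an auxiliary ratio $H$ with $H(1/2)=1$, introduces a second auxiliary function $L(s)$ with $L(1/2)=1$, and then compares the limits of $\mathcal{E}(z,s;\mathfrak{a},\mathfrak{b})/\mathcal{Z}_F(2s)$ from the two sides of $s=1/2$ to force $E(z,1/2;\mathfrak{a},\mathfrak{b})=-E(z,1/2;\mathfrak{a},\mathfrak{b})$. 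Your approach instead evaluates each of the three Fourier pieces at $s=1/2$ and observes that the first two cancel while the third is killed by the simple zero of $\zeta_F(2s)^{-1}$.

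Both routes rest on the same analytic input, namely the functional equation \eqref{2.2.4} for partial Dedekind zeta functions, which you use to extract the class-independent leading coefficient $-R_F/w_F$ of $\zeta_F(s,\mathcal{C})$ at $s=0$, and which the paper uses to build $H$ and $L$. Your computation of the second constant term as $-h_F^{-1}N(\mathfrak{b})^{-1}y^{\bm{\sigma}/2}$ is exactly the evaluation $M_2(1/2)=-h_F^{-1}N(\mathfrak{b})^{-1}y^{\bm{\sigma}/2}$ that the paper carries out anyway in the proof of Lemma~\ref{Der}; so your argument in effect anticipates that calculation and uses it to dispatch Lemma~\ref{0} immediately. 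The paper's symmetry argument is more conceptual but requires tracking the auxiliary functions $H,L$; your termwise evaluation is shorter and self-contained once the behavior of $\zeta_F(s,\mathcal{C})$ at $s=0$ is in hand.
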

\begin{proof}
Let $\mathfrak{f}\subset F$ be a fractional ideal, and $\tilde{\mathfrak{f}}$ be its dual, i.e. $[\mathfrak{f}]\cdot[\tilde{\mathfrak{f}}]=[\mathfrak{D}],$ where $\mathfrak{D}$ is the different of $F/\mathbb{Q}.$ Let 
$$\mathcal{Z}_F(s,[\mathfrak{f}]):=\mathcal{Z}_{F,\infty}(s)\zeta_F(s,[\mathfrak{f}]),\; \text{and}\; \mathcal{Z}_F(s):=\mathcal{Z}_{F,\infty}(s)\zeta_F(s),$$
where $\mathcal{Z}_{F,\infty}(s):={D_F}^{s/2}{\pi}^{-ns/2}\Gamma\left(s/2\right)^n.$ It is well known that we have the following functional equation for the partial completed zeta function:
\begin{equation}\label{2.2.4}
\mathcal{Z}_F(s,[\mathfrak{f}])=\mathcal{Z}_F(1-s,[\tilde{\mathfrak{f}}]).
\end{equation}
An immediate consequence of this is the functional equation for the completed Dedekind zeta function (obtained adding the partial ones), which has exactly the same form. Also, all the partial zeta functions have a simple pole at $s=1$ with the same residue $2^nR_Fw_F^{-1}D_F^{-1/2}.$

Let's introduce some functions to simplify the notations. Define
\begin{align*}
M_1(s):&=\mathcal{Z}_F(2s)N(\mathfrak{b})^{-2s}y^{s\bm{\sigma}}\frac{\zeta_F(2s,[\mathfrak{b}]^{-1})}{\zeta_F(2s)}
=\mathcal{Z}_{F,\infty}(2s)N(\mathfrak{b})^{-2s}y^{s\bm{\sigma}}\zeta_F(2s,[\mathfrak{b}]^{-1});\\
M^{*}_2(s):&=\left(\frac{\sqrt{\pi}\Gamma(s-1/2)}{\Gamma(s)}\right)^nD^{-\frac12}_FN(\mathfrak{b})^{-1}y^{(1-s)\bm{\sigma}}N(\mathfrak{a}\mathfrak{b})^{1-2s}
\frac{\zeta_F(2s-1,[\mathfrak{a}\mathfrak{b}]^{-1})}{\zeta_F(2s)}\\
M_2(s):&=\mathcal{Z}_F(2s)M^{*}_2(s)\\
&=\mathcal{Z}_{F,\infty}(2s-1)N(\mathfrak{b})^{-1}y^{(1-s)\bm{\sigma}}N(\mathfrak{a}\mathfrak{b})^{1-2s}\zeta_F(2s-1,[\mathfrak{a}\mathfrak{b}]^{-1}).\\
\end{align*}
Then by \eqref{2.2.4} (taking $\mathfrak{f}=\mathfrak{a}\mathfrak{b}$) we see that $M_2(1-s)=H(s)M_1(s),$ where
\begin{align*}
H(s):=N(\mathfrak{b})^{2s-1}N(\mathfrak{a}\mathfrak{b})^{2s-1}\cdot\frac{\zeta_F(2s,[\widetilde{\mathfrak{a}\mathfrak{b}}]^{-1})}
{\zeta_F(2s,[\mathfrak{b}]^{-1})}.
\end{align*}
Since the Dirichlet series $\zeta_F(2s,[\mathfrak{b}]^{-1})$ absolutely converges when $\mathfrak{Re}(s)>\frac12,$ $H(s)$ is holomorphic when $\mathfrak{Re}(s)>\frac12,$ and can be continued to a meromorphic function on $\mathbb{C}$. Since all the partial zeta functions have a simple pole at $s=1$ with the same residue, we see $H(s)$ is holomorphic at $s=\frac12$ and $H(\frac12)=1.$

Let $L(s)^{-1}:=H(s)H(1-s),$ then we have
\begin{align*}
L(s)^{-1}=\frac{\zeta_F(2s,[\widetilde{\mathfrak{a}\mathfrak{b}}]^{-1})\zeta_F(2-2s,[\widetilde{\mathfrak{a}\mathfrak{b}}]^{-1})}
{\zeta_F(2s,[\mathfrak{b}]^{-1})\zeta_F(2-2s,[\mathfrak{b}]^{-1})}.
\end{align*}
Likewise, $L(s)$ is a meromorphic function on $\mathbb{C}$ and is analytic at $s=1/2,$ with $L(1/2)=1.$ So we have $M(1-s)=H(s)M_1(S)+H(s)L(s)M_2(s).$ Now let
\begin{align*}
E_1(s):=\mathcal{Z}_F(2s)\left(\frac{2\pi^s}{\Gamma(s)}\right)^n\frac{y^{\frac{\bm{\sigma}}2}}{D^{1/2}_FN(\mathfrak{b})\zeta_F(2s)}
=2^nD^s_F\frac{y^{\frac{\bm{\sigma}}2}}{D^{1/2}_FN(\mathfrak{b})};
\end{align*}
\begin{align*}
E_2(s):&=\sum_{b\in F^{\times}}|N(b)|^{s-1/2}\lambda(b,s)\mathbf{e}(bx)\prod_{v\in\mathbf{J}_{\infty}}K_{s-\frac12}\left(2\pi y_v|b_v|\right)\\
&=\sum_{b\in F^{\times}}|N(b)|^{s-1/2}\mathop{\sum\sum}_{\substack{(a,c)\in\mathfrak{b}^{-1}\mathfrak{o}^{-1}\times\mathcal{C}\\ac=b}}|N(c)|^{1-2s}
\mathbf{e}(bx)\prod_{v\in\mathbf{J}_{\infty}}K_{s-\frac12}\left(2\pi y_v|b_v|\right)\\
&=\sum_{b\in F^{\times}}\mathop{\sum\sum}_{\substack{(a,c)\in\mathfrak{b}^{-1}\mathfrak{o}^{-1}\times\mathcal{C}\\ac=b}}\left(\frac{|N(a)|}{|N(c)|}\right)^{s-\frac12}
\mathbf{e}(bx)\prod_{v\in\mathbf{J}_{\infty}}K_{s-\frac12}\left(2\pi y_v|b_v|\right);\\
\end{align*}
Also set $E(s)=E_1(s)E_2(s).$ In fact both $E_1(s)$ and $E_2(s)$ are entire functions. Let's briefly explain why $E_2(s)$ is entire: by Turan's inequality for Bessel functions, $\log K_{\nu}(x)$ is convex. Also note the fact that
$$
\lim_{x\mapsto+\infty}\frac{\log K_{\nu}(x)}{x}=-1,
$$
hence we have $K_{\nu}(x)\leq(eK_{\nu}(1))e^{-x}.$ Namely, the Bessel K-function has exponentially decay, which forces the sum in $E_2(s)$ to converge absolutely. Also one sees easily that $E_1(s)=D^{2s-1}_FE_1(1-s).$ So we have
$$
E(1-s)=D^{1-2s}_FE_1(s)E_2(1-s).
$$
Let $\mathcal{E}(z,s;\mathfrak{a},\mathfrak{b}):=\mathcal{Z}_F(2s)E(z,s;\mathfrak{a},\mathfrak{b})$ be completed Eisenstein series, then by the Fourier expansion of $E(z,s;\mathfrak{a},\mathfrak{b})$ we have
$$
\mathcal{E}(z,s;\mathfrak{a},\mathfrak{b})=M(s)+E(s)=M_1(s)+M_2(s)+E_1(s)E_2(s).
$$
By the above computation one has
$$
\mathcal{E}(z,1-s;\mathfrak{a},\mathfrak{b})=H(s)M_1(s)+L(s)H(s)M_2(s)+D^{1-2s}_FE_1(s)E_2(1-s).
$$
Therefore, we have (noting that $K_{\nu}(x)=K_{-\nu}(x)$)
\begin{align*}
E(z,\frac12;\mathfrak{a},\mathfrak{b})&=\lim_{s\mapsto\frac12}\frac{\mathcal{E}(z,s;\mathfrak{a},\mathfrak{b})}{\mathcal{Z}_{F,\infty}(2s)\zeta_F(2s)}\\
&=\lim_{s\mapsto\frac12}\frac{\mathcal{E}(z,1-s;\mathfrak{a},\mathfrak{b})}{\mathcal{Z}_{F,\infty}(2-2s)\zeta_F(2-2s)}\\
&=-\lim_{s\mapsto\frac12}\frac{H(s)M_1(s)+L(s)H(s)M_2(s)+D^{1-2s}_FE_1(s)E_2(1-s)}{\mathcal{Z}_{F,\infty}(2-2s)\zeta_F(2s)}\\
&=-\lim_{s\mapsto\frac12}\frac{H(\frac12)M_1(s)+L(\frac12)H(\frac12)M_2(s)+E_1(\frac12)E_2(\frac12)}{\mathcal{Z}_{F,\infty}(2s)\zeta_F(2s)}\\
&=-\lim_{s\mapsto\frac12}\frac{\mathcal{E}(z,s;\mathfrak{a},\mathfrak{b})}{\mathcal{Z}_{F,\infty}(2s)\zeta_F(2s)}=-E(z,\frac12;\mathfrak{a},\mathfrak{b}).
\end{align*}
Thus we have $E(z,\frac12;\mathfrak{a},\mathfrak{b})=0.$
\end{proof}

\begin{remark}
Note that $E(z,s;\mathfrak{a},\mathfrak{b})$ may not have a functional equation, since the Hilbert modular variety may have several cusps. The Eisenstein matrix will always have a functional equation. However,
when $\mathfrak{a}=\mathfrak{b}=\mathcal{O}_F,$ there is only one cusp. In fact, we see that in this situation $H(s)=L(s)=1$ and $E(s)=E(1-s),$ then we have the functional equation
$$
\mathcal{E}(z,s;\mathfrak{a},\mathfrak{b})=\mathcal{E}(z,1-s;\mathfrak{a},\mathfrak{b}),
$$
which gives directly that $E(z,\frac12;\mathfrak{a},\mathfrak{b})=0.$ This is the case in \cite{IK04}.
\end{remark}

\subsection{Periods of Eisenstein Series}
In this section we combine the discussion in last two subsections to show the class group $L$-function $L_K(\chi,s)$ can be expressed as a weighted period of the Eisenstein series $E(z,s;\mathfrak{f})$ with respect to the CM 0-cycles $\mathcal{CM}(K,\Phi,\mathfrak{f}),$ where $[\mathfrak{f}]\in Cl(F)^{+}.$

Recall that we have the natural surjective map
\begin{align*}
\mathcal{CM}(K,\Phi)\twoheadrightarrow Cl(K),\quad [\mathfrak{a},r]\mapsto [\mathfrak{a}].
\end{align*}
And the fiber is indexed by $\epsilon\in \mathcal{O}^{\times,+}_F/ N_{K/F}\mathcal{O}^{\times}_K$ with order at most 2.

Recall that by Lemma \ref{prim},
Since $K/F$ is a CM extension of number fields of degree $2n,$ we have that (ref. \cite{Zi81}) for each ideal class $\mathcal{C}\in Cl(K),$ there exists an integral ideal $\mathfrak{a}_{\mathcal{C}}\in \mathcal{C}$ such that
$$
N_{K/{\mathbb{Q}}}(\mathfrak{a}_{\mathcal{C}})\leq M(0,n)\sqrt{D_K}.
$$
Clearly, we may assume $\mathfrak{a}_{[\mathcal{O}_K]}=\mathcal{O}_K.$ Thus we can define a set of representatives of $Cl(K)$ as
\begin{equation}\label{I_K'}
\mathcal{I}_K:=\Big\{\mathfrak{a}:\; \mathfrak{a}=\mathfrak{a}_{\mathcal{C}}\mathfrak{q},\quad \forall\; \mathcal{C}\in Cl(K)\Big\},
\end{equation}
where $\mathfrak{q}$ is the index ideal in \eqref{cm}.

For convenience, let us fix $\mathcal{I}_K$ once for all. Clearly we have
$$
Cl(K)=\{[\mathfrak{a}]:\; \mathfrak{a}\in \mathcal{I}_K\}.
$$
For any $\mathfrak{a}\in \mathcal{I}_K,$ let $y_{\mathfrak{a}}$ be the imaginary part of $z_{\mathfrak{a}},$ the associated CM point. Then by \eqref{cm2} we have
\begin{equation}\label{cm3}
y^{\bm{\sigma}}_{\mathfrak{a}}=N_{\Phi}(y_{\mathfrak{a}})
\geq\frac{M(0,n)^{-1}N_{F/{\mathbb{Q}}}(\mathfrak{f}_{\mathfrak{a}})}{2^nD_F},\quad\forall\; \mathfrak{a}\in \mathcal{I}_K.
\end{equation}

Also, for any fractional $\mathfrak{a}$ of $K,$ there exists a unique $\mathfrak{f}_{\mathfrak{a}}\in \mathcal{I}^{+}_F$ and a CM point $[\mathfrak{a},r]\in X(\mathfrak{f}_{\mathfrak{a}}):=\Gamma(\mathfrak{f}_{\mathfrak{a}})\setminus\mathbb{H}^n$ mapping to $[\mathfrak{a}].$ Note that there are at most 2 pre-images of $[\mathfrak{a}]\in Cl(K).$ From now on, we fix one of them $[\mathfrak{a},r]\in \mathcal{CM}(K,\Phi,\mathfrak{f}_{\mathfrak{a}}),$ $\forall$ $\mathfrak{a}.$

Then we have a decomposition \eqref{cm0}, i.e.
$$
\mathfrak{a}=\mathcal{O}_F\alpha_{\mathfrak{a}}+\mathfrak{f}_{\mathfrak{a}}\beta_{\mathfrak{a}}
$$
with $z_{\mathfrak{a}}:=\frac{\alpha_{\mathfrak{a}}}{\beta_{\mathfrak{a}}}\in K^{\times}\cap\mathbb{H}^n=\{z\in K^{\times}: \Phi(z)\in \mathbb{H}^n\}$. Moreover, $z_{\mathfrak{a}}$ represents the CM point $[\mathfrak{a},r].$

\begin{prop}\label{1}
Let $K$ be a CM extension of a totally real number field $F$ of degree $n,$ and $\Phi$ be a CM type of $K.$ Then we have
\begin{equation}\label{ly}
L_K(\chi,s)=\frac{\left(2^nD_F\right)^s}{D_K^{\frac{s}2}[\mathcal{O}^{\times}_K:\mathcal{O}^{\times}_F]}\sum_{[\mathfrak{a}^{-1}]\in Cl(K)}\overline{\chi}([\mathfrak{a}])N(\mathfrak{f}_{\mathfrak{a}})^{s}\zeta_F(2s)
E\left(z_{\mathfrak{a}},s;\mathfrak{f}^{-1}_{\mathfrak{a}},\mathfrak{f}_{\mathfrak{a}}\right),
\end{equation}
where $\mathfrak{f}_{\mathfrak{a}}\in \mathcal{I}^{+}_F$ is defined as above, $z_{\mathfrak{a}}$ is the corresponding CM points of $\mathfrak{a}$ via the map $\mathcal{CM}(K,\Phi)\twoheadrightarrow Cl(K).$
\end{prop}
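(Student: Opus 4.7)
The plan is to expand $L_K(\chi,s)$ as a Dirichlet series, group integral ideals by their class, parametrize each class using the decomposition from Proposition \ref{Yoo}, recognize the resulting inner sum as a value of the Hilbert Eisenstein series $E(z,s;\mathfrak{f}^{-1}_{\mathfrak{a}},\mathfrak{f}_{\mathfrak{a}})$ from Section \ref{3.3}, and then simplify the prefactor using Proposition \ref{prim}.

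First I would write the standard reorganization
\begin{equation*}
L_K(\chi,s)=\sum_{[\mathfrak{a}^{-1}]\in Cl(K)}\overline{\chi}([\mathfrak{a}])\,N(\mathfrak{a})^{s}\sum_{\lambda\in(\mathfrak{a}\setminus\{0\})/\mathcal{O}_K^{\times}}|N_{K/\mathbb{Q}}(\lambda)|^{-s},
\end{equation*}
obtained by parametrizing integral $\mathfrak{b}\in[\mathfrak{a}^{-1}]$ as $(\lambda)\mathfrak{a}^{-1}$. Then, using the decomposition $\mathfrak{a}=\mathcal{O}_F\alpha_{\mathfrak{a}}\oplus\mathfrak{f}_{\mathfrak{a}}\beta_{\mathfrak{a}}$ from Proposition \ref{Yoo}, refined as in the proof of Proposition \ref{prim} so that $\beta_{\mathfrak{a}}\in F^{\times}$ (which is possible because $\mathfrak{f}_{\mathfrak{a}}\beta_{\mathfrak{a}}=\mathfrak{q}^{-1}N_{K/F}(\mathfrak{a})\,c_{\mathfrak{a}}^{-1}$ is an $\mathcal{O}_F$-ideal inside $F$), every nonzero $\lambda\in\mathfrak{a}$ writes uniquely as $\lambda=c\alpha_{\mathfrak{a}}+d\beta_{\mathfrak{a}}$ with $(c,d)\in(\mathcal{O}_F\oplus\mathfrak{f}_{\mathfrak{a}})\setminus\{0\}$. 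Since complex conjugation on $K$ is induced by complex conjugation after each $\sigma\in\Phi$,
\begin{equation*}
|N_{K/\mathbb{Q}}(\lambda)|=|N_{K/\mathbb{Q}}(\beta_{\mathfrak{a}})|\prod_{\sigma\in\Phi}|\sigma(cz_{\mathfrak{a}}+d)|^{2},\qquad z_{\mathfrak{a}}=\alpha_{\mathfrak{a}}/\beta_{\mathfrak{a}}.
\end{equation*}

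Passing from $\mathcal{O}_K^{\times}$-orbits to $\mathcal{O}_F^{\times}$-orbits introduces the finite factor $[\mathcal{O}_K^{\times}:\mathcal{O}_F^{\times}]^{-1}$, and $\mathcal{O}_F^{\times}$ acts diagonally on the pair $(c,d)$. Choosing $\mathfrak{a}^{(G)}=\mathfrak{f}_{\mathfrak{a}}^{-1}$ and $\mathfrak{b}^{(G)}=\mathfrak{f}_{\mathfrak{a}}$ in the definition of $G(z,s;\cdot,\cdot)$ so that the support lattice $\mathfrak{a}^{(G)}\mathfrak{b}^{(G)}\oplus\mathfrak{b}^{(G)}=\mathcal{O}_F\oplus\mathfrak{f}_{\mathfrak{a}}$ is the correct one, the inner sum collapses to $y_{\mathfrak{a}}^{-s\bm{\sigma}}\zeta_F(2s)E(z_{\mathfrak{a}},s;\mathfrak{f}_{\mathfrak{a}}^{-1},\mathfrak{f}_{\mathfrak{a}})$, so after assembling everything,
\begin{equation*}
L_K(\chi,s)=\frac{\zeta_F(2s)}{[\mathcal{O}_K^{\times}:\mathcal{O}_F^{\times}]}\sum_{[\mathfrak{a}^{-1}]}\overline{\chi}([\mathfrak{a}])\,\frac{N(\mathfrak{a})^{s}}{|N_{K/\mathbb{Q}}(\beta_{\mathfrak{a}})|^{s}\,N_{\Phi}(y_{\mathfrak{a}})^{s}}\,E(z_{\mathfrak{a}},s;\mathfrak{f}_{\mathfrak{a}}^{-1},\mathfrak{f}_{\mathfrak{a}}).
\end{equation*}

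The last and most delicate step, and the one I expect to be the main obstacle, is the purely arithmetic identity
\begin{equation*}
\frac{N(\mathfrak{a})}{|N_{K/\mathbb{Q}}(\beta_{\mathfrak{a}})|\,N_{\Phi}(y_{\mathfrak{a}})}=\frac{2^{n}D_F\,N(\mathfrak{f}_{\mathfrak{a}})}{\sqrt{D_K}}.
\end{equation*}
Because $\beta_{\mathfrak{a}}\in F$, $|N_{K/\mathbb{Q}}(\beta_{\mathfrak{a}})|=|N_{F/\mathbb{Q}}(\beta_{\mathfrak{a}})|^{2}$, and taking absolute norms in the $\mathcal{O}_F$-ideal equality $\mathfrak{f}_{\mathfrak{a}}\beta_{\mathfrak{a}}=\mathfrak{q}^{-1}N_{K/F}(\mathfrak{a})\,c_{\mathfrak{a}}^{-1}$ yields
\begin{equation*}
|N_{K/\mathbb{Q}}(\beta_{\mathfrak{a}})|=\frac{N(\mathfrak{a})^{2}}{N(\mathfrak{f}_{\mathfrak{a}})^{2}\,N(\mathfrak{q})^{2}\,N_{K/\mathbb{Q}}(c_{\mathfrak{a}})}.
\end{equation*}
Plugging this together with the explicit formula for $N_{\Phi}(y_{\mathfrak{a}})$ provided by Proposition \ref{prim} into the displayed ratio produces the required $\bigl(2^{n}D_F N(\mathfrak{f}_{\mathfrak{a}})/\sqrt{D_K}\bigr)^{s}$, completing the proof.
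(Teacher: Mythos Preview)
Your argument is correct and follows essentially the same route as the paper: parametrize integral ideals in each class via $\mathfrak{a}=\mathcal{O}_F\alpha_{\mathfrak{a}}\oplus\mathfrak{f}_{\mathfrak{a}}\beta_{\mathfrak{a}}$, pass from $\mathcal{O}_K^{\times}$- to $\mathcal{O}_F^{\times}$-orbits, and recognize the sum over $(c,d)\in(\mathcal{O}_F\oplus\mathfrak{f}_{\mathfrak{a}})\setminus\{0\}$ as $G(z_{\mathfrak{a}},s;\mathfrak{f}_{\mathfrak{a}}^{-1},\mathfrak{f}_{\mathfrak{a}})$. The only difference is in how the prefactor is identified: the paper obtains
\[
N_{K/\mathbb{Q}}(\mathcal{O}_F z_{\mathfrak{a}}+\mathfrak{f}_{\mathfrak{a}})=y_{\mathfrak{a}}^{\bm{\sigma}}\,N(\mathfrak{f}_{\mathfrak{a}})\cdot\frac{2^n D_F}{\sqrt{D_K}}
\]
directly from a covolume/determinant computation, without ever invoking $c_{\mathfrak{a}}$, $\mathfrak{q}$, or the assumption $\beta_{\mathfrak{a}}\in F^{\times}$, whereas you reach the same identity by feeding the explicit formulas for $N_{\Phi}(y_{\mathfrak{a}})$ and $|N_{K/\mathbb{Q}}(\beta_{\mathfrak{a}})|$ from Proposition~\ref{prim} back in. Your route is slightly more circuitous (and relies on the particular refinement of the decomposition), but it is valid and yields the same result.
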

\begin{proof}
Let $C\in Cl(K)$ be an ideal class. Then there exist a unique primitive ideal $\mathfrak{a}\in\mathcal{I}_K$ such that $[\mathfrak{a}]=C^{-1}.$ Hence as $\mathfrak{b}$ runs over integral ideals in $C$, $\mathfrak{a}\mathfrak{b}=(w)$ runs over principal ideals $(w)$ with $w\in \mathfrak{a}/\mathcal{O}^{\times}_K.$ Let $\sum'$ denote that the summation is taking over nonzero integral variables (e.g. $\sum_{\mathfrak{a}}'$ means the summation is taken over all nonzero integral ideals $\mathfrak{a}\subset K$), then the partial Dedekind zeta function can be written
\begin{align*}
\zeta_K(s,C)&=\mathop{{\sum}'}_{\mathfrak{b}\in C}N_{K/\mathbb{Q}}(\mathfrak{b})^{-s}=N_{K/\mathbb{Q}}(\mathfrak{a})^{s}\mathop{{\sum}'}_{w\in \mathfrak{a}/\mathcal{O}^{\times}_K}N_{K/\mathbb{Q}}\left((w)\right)^{-s}\\
&=\frac{N_{K/\mathbb{Q}}(\mathfrak{a})^{s}}{[\mathcal{O}^{\times}_K:\mathcal{O}^{\times}_F]}\mathop{{\sum}'}_{(c,d)\in \mathcal{O}_F\oplus\mathfrak{f}_{\mathfrak{a}}/\mathcal{O}^{\times}_F}N_{K/\mathbb{Q}}\left((c\alpha_{\mathfrak{a}}+d\beta_{\mathfrak{a}})\right)^{-s}\\
&=\frac{N_{K/\mathbb{Q}}(\mathfrak{a})^{s}N_{K/\mathbb{Q}}\left((\beta_{\mathfrak{a}})\right)^{-s}}{[\mathcal{O}^{\times}_K:\mathcal{O}^{\times}_F]}
\mathop{{\sum}'}_{(c,d)\in\mathcal{O}_F\oplus\mathfrak{f}_{\mathfrak{a}}/\mathcal{O}^{\times}_F}N_{K/\mathbb{Q}}\left((cz_{\mathfrak{a}}+d)\right)^{-s}\\
&=\frac{N_{K/\mathbb{Q}}(\mathcal{O}_Fz_{\mathfrak{a}}+\mathfrak{f}_{\mathfrak{a}})^{s}}{[\mathcal{O}^{\times}_K:\mathcal{O}^{\times}_F]}
\mathop{{\sum}'}_{(c,d)\in\mathcal{O}_F\oplus\mathfrak{f}_{\mathfrak{a}}/\mathcal{O}^{\times}_F}N_{K/\mathbb{Q}}\left((cz_{\mathfrak{a}}+d)\right)^{-s}.\\
\end{align*}
Write $z_{\mathfrak{a}}=x_{\mathfrak{a}}+iy_{\mathfrak{a}},$ then a calculation with determinants yields
\begin{equation}\label{Yo}
N_{K/\mathbb{Q}}(\mathcal{O}_Fz_{\mathfrak{a}}+\mathfrak{f}_{\mathfrak{a}})
=y_{\mathfrak{a}}^{\bm{\sigma}}N_{F/\mathbb{Q}}(\mathfrak{f}_{\mathfrak{a}})\cdot\frac{2^nD_F}{\sqrt{D_K}}.
\end{equation}
By a calculation with the CM type $\Phi$ we have$
N_{K/\mathbb{Q}}\left((cz_{\mathfrak{a}}+d)\right)=|cz_{\mathfrak{a}}+d|^{2\bm{\sigma}},
$
where we have identified $z_{\mathfrak{a}}$ with $\Phi(z_{\mathfrak{a}})\in\mathbb{H}^n.$
Thus by combining the preceding computations we obtain
\begin{align*}
\zeta_K(s,C)&=\frac{\left(2^nD_F N_{F/\mathbb{Q}}(\mathfrak{f}_{\mathfrak{a}})\right)^s}{D_K^{\frac{s}2}[\mathcal{O}^{\times}_K:\mathcal{O}^{\times}_F]}
\mathop{{\sum}'}_{(c,d)\in\mathcal{O}_F\oplus\mathfrak{f}_{\mathfrak{a}}/\mathcal{O}^{\times}_F}y_{\mathfrak{a}}^{s\bm{\sigma}}|cz_{\mathfrak{a}}
+d|^{-2s\bm{\sigma}}\\
&=\frac{\left(2^nD_F N_{F/\mathbb{Q}}(\mathfrak{f}_{\mathfrak{a}})\right)^s}{D_K^{\frac{s}2}[\mathcal{O}^{\times}_K:\mathcal{O}^{\times}_F]}
G\left(z_{\mathfrak{a}},s;\mathfrak{f}^{-1}_{\mathfrak{a}},\mathfrak{f}_{\mathfrak{a}}\right).
\end{align*}
Finally, using that $L_K(\chi,s)=\sum_{C\in Cl(K)}\chi(C)\zeta_K(s,C)$
to obtain the formula \eqref{ly}.
\end{proof}

In particular, it comes from Lemma \ref{0} and Proposition \ref{1} that 
\begin{prop}\label{11}
	Let notations be as above. Then we have
	\begin{equation}\label{l}
	L_K\left(\chi,\frac12\right)=\frac{2^{\frac{n}2-1}\rho_F\sqrt{D_F}}{D_K^{\frac14}[\mathcal{O}^{\times}_K:\mathcal{O}^{\times}_F]}\sum_{[\mathfrak{a}^{-1}]\in Cl(K)}\overline{\chi}([\mathfrak{a}])\sqrt{N(\mathfrak{f}_{\mathfrak{a}})}
	E'\left(z_{\mathfrak{a}},\frac12;\mathfrak{f}^{-1}_{\mathfrak{a}},\mathfrak{f}_{\mathfrak{a}}\right),
	\end{equation}
\end{prop}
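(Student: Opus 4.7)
The plan is to derive Proposition \ref{11} as a direct consequence of the formula \eqref{ly} in Proposition \ref{1} combined with the vanishing statement in Lemma \ref{0}. The key observation is that in \eqref{ly}, the factor $\zeta_F(2s)$ has a simple pole at $s=1/2$, while Lemma \ref{0} says that the Eisenstein series $E(z_{\mathfrak{a}},s;\mathfrak{f}^{-1}_{\mathfrak{a}},\mathfrak{f}_{\mathfrak{a}})$ has a zero at $s=1/2$. Their product is therefore holomorphic at $s=1/2$, and the value is essentially governed by the residue of $\zeta_F$ times the derivative of the Eisenstein series — a straightforward L'H\^opital-type computation.

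More concretely, I would first expand $\zeta_F(2s)$ around $s=1/2$ using \eqref{zeta1}, obtaining
\begin{equation*}
\zeta_F(2s)=\frac{\rho_F/2}{s-1/2}+\gamma_F+O(s-1/2).
\end{equation*}
Next, from Lemma \ref{0}, the Taylor expansion of $E(z_{\mathfrak{a}},s;\mathfrak{f}^{-1}_{\mathfrak{a}},\mathfrak{f}_{\mathfrak{a}})$ around $s=1/2$ reads
\begin{equation*}
E(z_{\mathfrak{a}},s;\mathfrak{f}^{-1}_{\mathfrak{a}},\mathfrak{f}_{\mathfrak{a}})=E'(z_{\mathfrak{a}},1/2;\mathfrak{f}^{-1}_{\mathfrak{a}},\mathfrak{f}_{\mathfrak{a}})(s-1/2)+O((s-1/2)^2).
\end{equation*}
Multiplying the two expansions, the polar part cancels and the value at $s=1/2$ of the product $\zeta_F(2s)E(z_{\mathfrak{a}},s;\mathfrak{f}^{-1}_{\mathfrak{a}},\mathfrak{f}_{\mathfrak{a}})$ equals $(\rho_F/2)\cdot E'(z_{\mathfrak{a}},1/2;\mathfrak{f}^{-1}_{\mathfrak{a}},\mathfrak{f}_{\mathfrak{a}})$.

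Finally, I would substitute $s=1/2$ into the remaining analytic prefactor $(2^nD_F)^s D_K^{-s/2}N(\mathfrak{f}_{\mathfrak{a}})^s$ of \eqref{ly}, giving $2^{n/2}D_F^{1/2}D_K^{-1/4}\sqrt{N(\mathfrak{f}_{\mathfrak{a}})}$, and combine with the factor $\rho_F/2$ from the limit computation. Collecting all constants produces
\begin{equation*}
\frac{2^{n/2}D_F^{1/2}}{D_K^{1/4}[\mathcal{O}^{\times}_K:\mathcal{O}^{\times}_F]}\cdot\frac{\rho_F}{2}=\frac{2^{n/2-1}\rho_F\sqrt{D_F}}{D_K^{1/4}[\mathcal{O}^{\times}_K:\mathcal{O}^{\times}_F]},
\end{equation*}
which is exactly the constant appearing in \eqref{l}. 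Summing the result over $[\mathfrak{a}^{-1}]\in Cl(K)$ with weights $\overline{\chi}([\mathfrak{a}])$ yields the desired identity.

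There is no real obstacle here: the main point to verify carefully is the interchange of the limit $s\to 1/2$ with the finite sum over $[\mathfrak{a}^{-1}]\in Cl(K)$, which is immediate because the sum is finite and each summand is holomorphic near $s=1/2$ after the cancellation described above. Everything else is bookkeeping with the Laurent and Taylor expansions already recorded in Section \ref{3.3}.
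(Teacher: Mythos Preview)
Your proposal is correct and is exactly the approach the paper takes: the paper simply records that Proposition \ref{11} ``comes from Lemma \ref{0} and Proposition \ref{1}'', and your computation spells out precisely this deduction, combining the Laurent expansion of $\zeta_F(2s)$ at $s=1/2$ with the first-order vanishing of $E(z_{\mathfrak{a}},s;\mathfrak{f}_{\mathfrak{a}}^{-1},\mathfrak{f}_{\mathfrak{a}})$ at $s=1/2$ to extract the constant $\rho_F/2$ and then collecting the remaining prefactors.
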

\begin{remark}
	Note that $\sqrt{N_{F/\mathbb{Q}}(\mathfrak{f}_{\mathfrak{a}})}
	E'\left(z_{\mathfrak{a}},\frac12;\mathfrak{f}^{-1}_{\mathfrak{a}},\mathfrak{f}_{\mathfrak{a}}\right)$ is independent of the choice of $\mathfrak{f}_{\mathfrak{a}},$ for any fractional ideal $\mathfrak{a}$ in $K.$ Formula \eqref{l} is known when the narrow class group of $F$ is trivial (ref. \cite{Ma10}), in which case one can take $\mathfrak{f}_{\mathfrak{a}}$ to be $\mathcal{O}_K$.
\end{remark}
We will use this Eisenstein period formula \eqref{l} in conjunction with the CM type norm formula \eqref{cm2}, following an idea of Iwaniec and Kowalski \cite{IK04}, to obtain Theorem \ref{main} in Section \ref{sec4}.

\subsection{Derivatives of Eisenstein Series at the Central Point}
In this subsection the derivative of the Eisenstein series and its Fourier expansion will be investigated. While further estimates will be provided in the next section. We start from the vanishing property of $E'\left(z,\frac12;\mathfrak{a},\mathfrak{b}\right)$ as follows.
\begin{lemma}\label{Der}
Let $\mathfrak{a}$ and $\mathfrak{b}$ be fractional ideals of $F$ as before. Then we have
\begin{align*}
E'\left(z,\frac12;\mathfrak{a},\mathfrak{b}\right)=&\frac{y^{\frac{\bm{\sigma}}2}}{N_{F/\mathbb{Q}}(\mathfrak{b})}\Bigg\{
2h^{-1}_F\log y^{\bm{\sigma}}+\frac{4(\gamma_{F,[\mathfrak{b}]^{-1}}-h^{-1}_F\gamma_F)}{h_F\rho_F}
\Bigg.\\
&\phantom{=\;\;}\Bigg.
+h_F^{-1}\Big[\log N_{F/\mathbb{Q}}(\mathfrak{a}\mathfrak{b}^{-1})-n(\gamma+2\log2)\Big]
\Bigg.\\
&\phantom{=\;\;}\Bigg.
+\frac{2^{n+1}}{\rho_F\sqrt{D_F}}\sum_{b\in F^{\times}}\mathop{\sum\sum}_{\substack{(a,c)\in\mathfrak{b}^{-1}\mathfrak{o}^{-1}\times\mathcal{C}\\ac=b}}\mathbf{e}(bx)\prod_{v\in\mathbf{J}_{\infty}}K_0\left(2\pi y_v|b_v|\right)
\Bigg\},
\end{align*}
where the Euler-Kronecker constants $\gamma_{F,[\mathfrak{b}]^{-1}}$ and $\gamma_F$ are defined in \eqref{zeta} and \eqref{zeta1} respectively; $\gamma=0.57721\dots$ is the Euler-Mascheroni constant and
$$
\mathcal{C}:=\left(\mathfrak{a}\mathfrak{b}\cap F^{\times}\right)/\mathcal{O}^{\times,+}_F.
$$
\end{lemma}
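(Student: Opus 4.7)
The plan is to compute $E'(z,1/2;\mathfrak{a},\mathfrak{b})$ by differentiating the Fourier expansion of $E(z,s;\mathfrak{a},\mathfrak{b})$ from Section 3.3 term by term and evaluating at $s=1/2$. Write the expansion as $T_1(s)+T_2(s)+T_3(s)$, where $T_1$ is the $y^{s\bm\sigma}$-term, $T_2$ is the $y^{(1-s)\bm\sigma}$-term (containing $\Gamma(s-1/2)^n$), and $T_3$ is the non-constant Bessel sum. By Lemma \ref{0} we already know $\sum_i T_i(1/2)=0$, so the task reduces purely to the evaluation of the three derivatives $T_i'(1/2)$ and their recombination.

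The Bessel term $T_3$ is the cleanest. Its analytic prefactor $(2\pi^s/\Gamma(s))^n\cdot\bigl[D_F^{1/2}N(\mathfrak{b})\zeta_F(2s)\bigr]^{-1}$ has a simple zero at $s=1/2$ coming entirely from $1/\zeta_F(2s)$, since $(2\pi^s/\Gamma(s))^n|_{s=1/2}=2^n$, while the underlying $b$-sum is absolutely convergent and analytic in a neighborhood of $s=1/2$ because of the exponential decay of $K_{s-1/2}(2\pi y_v|b_v|)$. Consequently $T_3'(1/2)$ equals (derivative of prefactor)$\times$(sum at $s=1/2$). Substituting $\zeta_F(2s)^{-1}=2(s-1/2)/\rho_F+O((s-1/2)^2)$ and observing that $\lambda(b,1/2)$ degenerates to the counting function $\#\{(a,c)\in\mathfrak{b}^{-1}\mathfrak{o}^{-1}\times\mathcal{C}:ac=b\}$, one recovers exactly the Bessel summand $\frac{2^{n+1}}{\rho_F\sqrt{D_F}}\sum_b\sum_{ac=b}\mathbf{e}(bx)\prod_vK_0(2\pi y_v|b_v|)$ in the claimed formula.

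For $T_1$ and $T_2$ I would Taylor-expand each factor about $s=1/2$. In $T_1$ the ratio $\zeta_F(2s,[\mathfrak{b}]^{-1})/\zeta_F(2s)$ is regular at $s=1/2$; using the Laurent expansions $\zeta_F(w)=\rho_F(w-1)^{-1}+\gamma_F+O(w-1)$ and $\zeta_F(w,\mathcal{C})=h_F^{-1}\rho_F(w-1)^{-1}+\gamma_{F,\mathcal{C}}+O(w-1)$, its value is $h_F^{-1}$ and its derivative at $s=1/2$ produces the Euler--Kronecker combination $\gamma_{F,[\mathfrak{b}]^{-1}}-h_F^{-1}\gamma_F$. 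In $T_2$, the apparent pole of order $n$ of $\Gamma(s-1/2)^n$ is killed by the zero of order $n-1$ of $\zeta_F(2s-1,[\mathfrak{ab}]^{-1})$ (using \eqref{zeta2} and its partial-class analogue) together with the simple zero of $1/\zeta_F(2s)$; the finite value and derivative are then extracted using $\Gamma(s-1/2)=(s-1/2)^{-1}-\gamma+O(s-1/2)$ and $\psi(1/2)=-\gamma-2\log 2$, which is the source of the constant $-n(\gamma+2\log 2)$. The remaining $\log y^{\bm\sigma}$ and $\log N(\mathfrak{ab}^{-1})$ contributions come from differentiating $y^{s\bm\sigma}$, $y^{(1-s)\bm\sigma}$, $N(\mathfrak{b})^{-2s}$ and $N(\mathfrak{ab})^{1-2s}$ and combining with the values of the zeta ratios at $s=1/2$.

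The main technical obstacle is the order-$n$ pole cancellation in $T_2$, which in principle forces one to track $n+1$ Taylor coefficients of $\zeta_F(2s-1,[\mathfrak{ab}]^{-1})$ at $s=1/2$ (equivalently, of $\zeta_F(w,\mathcal{C})$ at $w=0$). A streamlined alternative is to invoke the functional equation \eqref{2.2.4} first: rewriting $\mathcal{Z}_F(2s-1,[\mathfrak{ab}]^{-1})=\mathcal{Z}_F(2-2s,[\mathfrak{D}][\mathfrak{ab}])$ lets the archimedean factor on the right absorb $\Gamma(s-1/2)^n$, after which only the simple pole of $\zeta_F(2-2s,\cdot)$ at $s=1/2$ and its constant term are needed, reducing the computation in $T_2$ to essentially the same level of bookkeeping as $T_1$.
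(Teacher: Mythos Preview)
Your proposal is correct and follows essentially the same route as the paper: split the Fourier expansion into the two constant terms and the Bessel term, differentiate each, and evaluate at $s=1/2$ using the Laurent data of the partial and full Dedekind zeta functions together with $\psi(1/2)=-\gamma-2\log 2$. The one organizational difference concerns the second constant term $T_2$: the paper handles the order-$n$ pole cancellation by taking the \emph{logarithmic} derivative of $M_2(s)$ (so that $\Gamma'(s-\tfrac12)/\Gamma(s-\tfrac12)$, $\zeta_F'(2s-1,\cdot)/\zeta_F(2s-1,\cdot)$ and $-\zeta_F'(2s)/\zeta_F(2s)$ each contribute a simple pole at $s=1/2$ and these visibly cancel), and only afterwards invokes the functional equation \eqref{2.2.4} to evaluate $M_2(1/2)$. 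Your alternative of applying \eqref{2.2.4} \emph{before} differentiating is equally valid and arguably cleaner, since it reduces $T_2$ to the same analytic shape as $T_1$; either way one lands on the displayed formula.
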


\begin{proof}
To simplify the computation, let's introduce some notation. Set
\begin{align*}
&M_1(s):=N_{F/\mathbb{Q}}(\mathfrak{b})^{-2s}y^{s\bm{\sigma}}\frac{\zeta_F(2s,[\mathfrak{b}]^{-1})}{\zeta_F(2s)};\\
&M_2(s):=\left(\frac{\sqrt{\pi}\Gamma(s-1/2)}{\Gamma(s)}\right)^nD^{-\frac12}_FN(\mathfrak{b})^{-1}y^{(1-s)\bm{\sigma}}N(\mathfrak{a}\mathfrak{b})^{1-2s}
\frac{\zeta_F(2s-1,[\mathfrak{a}\mathfrak{b}]^{-1})}{\zeta_F(2s)};\\
&E(s):=\frac{2^n\pi^{ns}y^{\frac{\bm{\sigma}}2}}{\sqrt{D_F}\Gamma(s)^nN(\mathfrak{b})\zeta_F(2s)}\sum_{b\in F^{\times}}|N(b)|^{s-\frac12}\lambda(b,s)\mathbf{e}(bx)\prod_{v\in\mathbf{J}_{\infty}}K_{s-\frac12}\left(2\pi y_v|b_v|\right).
\end{align*}
Note that for convenience, we short the notation of the norm $N_{F/\mathbb{Q}}$ just for $N$ occasionally. Then clearly $E'(z,1/2;\mathfrak{a},\mathfrak{b})=M'_1(1/2)+M'_2(1/2)+E'(1/2).$ Also,
\begin{align*}
M'_1(s)=\Big[-2\log N(\mathfrak{b})+\log y^{\bm{\sigma}}\Big]\cdot M_1(s)+N(\mathfrak{b})^{-2s}y^{s\bm{\sigma}}\left(\frac{\zeta_F(2s,[\mathfrak{b}]^{-1})}{\zeta_F(2s)}\right)'.
\end{align*}
By \eqref{zeta} and \eqref{zeta1} we have
\begin{align*}
\left(\frac{\zeta_F(2s,[\mathfrak{b}]^{-1})}{\zeta_F(2s)}\right)'\mid_{s=\frac12}
&=\lim_{s\mapsto\frac12}\frac{2\zeta'_F(2s,[\mathfrak{b}]^{-1})\zeta_F(2s)-2\zeta'_F(2s)\zeta_F(2s,[\mathfrak{b}]^{-1})}{\zeta^2_F(2s)}\\
&=\lim_{s\mapsto\frac12}\frac{\frac{4\rho_F}{(2s-1)^2}
\left(\frac{h^{-1}_F\rho_F}{2s-1}+\gamma_{F,[\mathfrak{b}]^{-1}}\right)-\frac{4h^{-1}_F\rho_F}{(2s-1)^2}\left(\frac{\rho_F}{2s-1}+\gamma_{F}\right)}
{\frac{\rho_F^2}{(2s-1)^2}}\\
&=\frac{4(\gamma_{F,[\mathfrak{b}]^{-1}}-h^{-1}_F\gamma_F)}{\rho_F}.
\end{align*}
Note that by \eqref{zeta} and \eqref{zeta1} we have $
M_1\left(\frac12\right)=\frac{y^{\frac{\bm{\sigma}}2}}{N(\mathfrak{b})}\lim_{s\mapsto\frac12}\frac{\zeta_F(2s,[\mathfrak{b}]^{-1})}{\zeta_F(2s)}
=\frac{y^{\frac{\bm{\sigma}}2}}{h_FN(\mathfrak{b})}.$
Thus we have
\begin{equation}\label{M_1}
M'_1(1/2)=\frac{y^{\frac{\bm{\sigma}}2}}{h_FN(\mathfrak{b})}\Bigg\{
\log\frac{y^{\bm{\sigma}}}{N(\mathfrak{b})^2}+\frac{4(\gamma_{F,[\mathfrak{b}]^{-1}}-h^{-1}_F\gamma_F)}{\rho_F}\Bigg\}.
\end{equation}
For the $M'_2(1/2)-$term, by definition we have
\begin{align*}
\log M_2(s)&=C+n\log\Gamma(s-1/2)-n\log\Gamma(s)+(1-s)\log y^{\bm{\sigma}}\\
&\quad+(1-2s)\log N(\mathfrak{a}\mathfrak{b})+\log\zeta_F(2s-1,[\mathfrak{a}\mathfrak{b}]^{-1})-\log\zeta_F(2s),
\end{align*}
where $C:=n\log\sqrt{\pi}-1/2\log D_F-\log N(\mathfrak{b}).$ From this identity we obtain
\begin{align*}
M'_2(s)=M_2(s)\cdot&\Bigg\{\frac{n\Gamma'(s-\frac12)}{\Gamma(s-\frac12)}-\frac{n\Gamma'(s)}{\Gamma(s)}-\log y^{\bm{\sigma}}-2s\log N(\mathfrak{a}\mathfrak{b})
\Bigg.\\
&\phantom{=\;\;}\Bigg.
+\frac{2\zeta'_F(2s-1,[\mathfrak{a}\mathfrak{b}]^{-1})}{\zeta_F(2s-1,[\mathfrak{a}\mathfrak{b}]^{-1})}
-\frac{2\zeta'_F(2s-1)}{\zeta_F(2s-1)}\Bigg\}.
\end{align*}
Since $\Gamma(s-1/2)\sim (s-1/2)^{-1}$ around $s=1/2$ and noting \eqref{zeta1} and \eqref{zeta2}, we obtain
\begin{align*}
M'_2(1/2)&=M_2(1/2)\lim_{s\mapsto1/2}\Bigg\{-\frac{n}{s-1/2}-\frac{n\Gamma'(1/2)}{\Gamma(1/2)}-\log y^{\bm{\sigma}}
\Bigg.\\
&\qquad\phantom{=\;\;}\Bigg.
-2s\log N(\mathfrak{a}\mathfrak{b})+\frac{n-1}{s-1/2}+\frac{1}{s-1/2}\Bigg\}\\
&=-M_2(1/2)\cdot\Bigg\{\frac{n\Gamma'(1/2)}{\Gamma(1/2)}+\log y^{\bm{\sigma}}+\log N(\mathfrak{a}\mathfrak{b})\Bigg\}.
\end{align*}
By \eqref{zeta1} and \eqref{zeta2} and the functional equation \eqref{2.2.4} one can easily deduce that
$
M_2(1/2)=-h_F^{-1}N(\mathfrak{b})^{-1}y^{\bm{\sigma}/2}.$
Hence we have
\begin{equation}\label{M_2}
M'_2(1/2)=h_F^{-1}N(\mathfrak{b})^{-1}y^{\bm{\sigma}/2}\cdot\Bigg\{\frac{n\Gamma'(1/2)}{\sqrt{\pi}}+\log y^{\bm{\sigma}}+\log N(\mathfrak{a}\mathfrak{b})\Bigg\}.
\end{equation}
Now let's compute $\Gamma'(1/2)$: Differentiating the Hadamard decomposition of $\Gamma(s)^{-1}$ logarithmically at $s=1$ we see $\Gamma'(1)=-\gamma$ by the definition of $\gamma.$ Since $\Gamma(s+1)=s\Gamma(s),$ we have $\Gamma'(2)=1-\gamma.$ Now consider the duplication formula $\Gamma(2s)=\pi^{-1/2}2^{2s-1}\Gamma(s)\Gamma(s+1/2).$
Differentiating it at $s=1/2$ we thus obtain $\Gamma'(1/2)=-\sqrt{\pi}(\gamma+2\log 2).$ Plug this into \eqref{M_2} to obtain
\begin{equation}\label{M_2'}
M'_2(1/2)=h_F^{-1}N(\mathfrak{b})^{-1}y^{\bm{\sigma}/2}\cdot\{\log y^{\bm{\sigma}}+\log N(\mathfrak{a}\mathfrak{b})-n(\gamma+2\log 2)\}.
\end{equation}
Finally we deal with $E'(1/2)-$term. Noting that $\lim_{s\mapsto1/2}\zeta^{-1}_F(2s)=0,$ we have
\begin{align*}
E'\left(\frac12\right)&=\frac{-2^ny^{\frac{\bm{\sigma}}2}}{\sqrt{D_F}N(\mathfrak{b})}\lim_{s\mapsto\frac12}\frac{2\zeta'_F(2s)}{\zeta^2_F(2s)}\sum_{b\in F^{\times}}|N(b)|^{s-\frac12}\lambda(b,s)\mathbf{e}(bx)\prod_{v\in\mathbf{J}_{\infty}}K_{s-\frac12}\left(2\pi y_v|b_v|\right)\\
&=\frac{2^{n+1}y^{\frac{\bm{\sigma}}2}}{D^{1/2}_FN(\mathfrak{b})\rho_F}\sum_{b\in F^{\times}}\lambda\left(b,\frac12\right)\mathbf{e}(bx)\prod_{v\in\mathbf{J}_{\infty}}K_{s-\frac12}\left(2\pi y_v|b_v|\right)\\
&=\frac{2^{n+1}y^{\frac{\bm{\sigma}}2}}{D^{1/2}_FN(\mathfrak{b})\rho_F}\sum_{b\in F^{\times}}\mathop{\sum\sum}_{\substack{(a,c)\in\mathfrak{b}^{-1}\mathfrak{o}^{-1}\times\mathcal{C}\\ac=b}}\mathbf{e}(bx)\prod_{v\in\mathbf{J}_{\infty}}
K_0\left(2\pi y_v|b_v|\right).
\end{align*}
Combining this formula with \eqref{M_1} and \eqref{M_2'} we thus obtain the conclusion.
\end{proof}

\section{Proof of the Main Theorems}\label{sec4}
\subsection{Estimates Related to L-functions}\label{sec4.1}
Let $F$ be a totally real number field of degree $n.$ Let $K/F$ be a CM extension and $\widehat{Cl(K)}$ be the dual group of the ideal class group $Cl(K).$ Note that $L(\chi,1)$ is finite for any nontrivial $\chi\in\widehat{Cl(K)},$ so we can define
\begin{equation}\label{gamma}
\mathcal{L}_{F}:=\max_{\chi\in\widehat{Cl(F)}\setminus\{\chi_0\}}|L_F(\chi,1)|.
\end{equation}
Also, in this paper we will alway use $M(r_1, r_2)$ as the generalized Minkowski function defined in \eqref{zi*}. 
\begin{prop}\label{fin}
Let notation be as above, and $\chi_0$ be the trivial character in $\widehat{Cl(K)}.$ Then we have
\begin{equation}\label{4.2}
L_K\left(\chi_0,\frac12\right)\geq \frac{\rho_F}{[\mathcal{O}^{\times}_K:\mathcal{O}^{\times}_F]\cdot h_F}\left(\frac12
\log\frac{\sqrt{D_K}}{D_F}-\Phi^0_F\cdot h_KD_K^{-\frac14}\right).
\end{equation}
where
$$
\Phi^0_F:=\frac{2^{5n/2}M(0,n)D_F^{7/4}h_F^2}{\pi^n\rho_Fh^{+}_F}+e^{2\rho_F^{-1}\mathcal{L}_{F}+n}\mathcal{L}_{F}^*\sqrt{D_F},
$$
and $\mathcal{L}_{F}^*=4\rho_F^{-1}\mathcal{L}_{F}+\log D_F+(3\log2-\log\pi)n+\sqrt{7n}+4.$
\end{prop}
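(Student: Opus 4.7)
The plan is to substitute the Fourier expansion from Lemma \ref{Der} into the period identity of Proposition \ref{11} at the trivial character $\chi=\chi_0$ and then use the explicit CM type norm formula \eqref{cm2} to isolate the logarithmic main term. With $\mathfrak{a}=\mathfrak{f}_{\mathfrak{a}}^{-1}$ and $\mathfrak{b}=\mathfrak{f}_{\mathfrak{a}}$ in Lemma \ref{Der}, the derivative $E'(z_{\mathfrak{a}},1/2;\mathfrak{f}_{\mathfrak{a}}^{-1},\mathfrak{f}_{\mathfrak{a}})$ decomposes as $y_{\mathfrak{a}}^{\bm{\sigma}/2}/N(\mathfrak{f}_{\mathfrak{a}})$ times the sum of a leading piece $2h_F^{-1}\log y_{\mathfrak{a}}^{\bm{\sigma}}$, an Euler--Kronecker and arithmetic piece $T_{\mathfrak{a}}$, and a Bessel $K_0$-sum $B_{\mathfrak{a}}$. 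Substituting \eqref{cm2} into $\log y_{\mathfrak{a}}^{\bm{\sigma}}$ separates out the $\mathfrak{a}$-independent term $\log(\sqrt{D_K}/D_F)$; after combining with the prefactor of \eqref{l} and summing over $[\mathfrak{a}^{-1}]\in Cl(K)$, this term furnishes the leading contribution $\tfrac{\rho_F}{2[\mathcal{O}_K^{\times}:\mathcal{O}_F^{\times}]h_F}\log(\sqrt{D_K}/D_F)$ appearing in \eqref{4.2}.

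For the Euler--Kronecker part of $T_{\mathfrak{a}}$, I would apply character orthogonality $\zeta_F(s,\mathcal{C})=h_F^{-1}\sum_{\chi}\bar{\chi}(\mathcal{C})L_F(\chi,s)$ to both sides of \eqref{zeta} and compare constant terms at $s=1$, which yields
\begin{align*}
\gamma_{F,\mathcal{C}}-h_F^{-1}\gamma_F = h_F^{-1}\sum_{\chi\neq\chi_0}\bar{\chi}(\mathcal{C})L_F(\chi,1);
\end{align*}
this is bounded in modulus by $\mathcal{L}_F$ from \eqref{gamma}. The remaining arithmetic terms $\log N(\mathfrak{f}_{\mathfrak{a}}^{-2})$ and the residual pieces $\log(N(c_{\mathfrak{a}})N(\mathfrak{q})^2/N(\mathfrak{a}))$ coming from the $\mathfrak{a}$-dependent part of $\log y_{\mathfrak{a}}^{\bm{\sigma}}$ are controlled uniformly via the Minkowski-type bounds on $\mathfrak{a}\in\mathcal{I}_K$ from \eqref{I_K'} and $\mathfrak{f}_{\mathfrak{a}}\in\mathcal{I}_F^{+}$ from \eqref{I_F}. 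Packaging these estimates against the $D_K^{-1/4}$ prefactor and summing over the class group of $K$ produces the first summand $2^{5n/2}M(0,n)D_F^{7/4}h_F^2/(\pi^n\rho_F h_F^{+})$ of $\Phi^0_F$.

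The main obstacle is controlling the Bessel sum $B_{\mathfrak{a}}$ effectively. Combining the exponential decay $K_0(x)\leq eK_0(1)e^{-x}$ from the proof of Lemma \ref{0} with the lower bound $y_{\mathfrak{a}}^{\bm{\sigma}}\geq M(0,n)^{-1}N(\mathfrak{f}_{\mathfrak{a}})/(2^n D_F)$ from \eqref{cm3} handles the rapid convergence at the infinite places, but the divisor-like coefficient $\lambda(b,1/2)$ itself requires an effective upper bound on partial Dedekind zeta values of $F$ at $s=1$. This is where $\mathcal{L}_F$ enters; exponentiating the resulting bound and packaging the residual logarithmic data (degree $n$, $\log D_F$, the $\sqrt{7n}$ of \eqref{zi}, and elementary constants) into the single invariant $\mathcal{L}_F^{*}$ produces the second summand $e^{2\rho_F^{-1}\mathcal{L}_F+n}\mathcal{L}_F^{*}\sqrt{D_F}$ of $\Phi^0_F$. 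Assembling the leading term with the two explicit error contributions then yields \eqref{4.2}.
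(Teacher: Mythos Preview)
Your plan has the right ingredients---Lemma~\ref{Der}, Proposition~\ref{11}, and the CM norm formula \eqref{cm2}---but the attribution of the two summands of $\Phi_F^0$ is reversed, and this reflects a genuine gap in the argument.

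In the paper, the term $2^{5n/2}M(0,n)D_F^{7/4}h_F^2/(\pi^n\rho_F h_F^{+})$ comes entirely from the Bessel piece $B_{\mathfrak{a}}$: one bounds $K_0(x)<K_{1/2}(x)=\sqrt{\pi/(2x)}\,e^{-x}$, uses the lattice volume of $\mathfrak{f}_{\mathfrak{a}}^{-1}\mathfrak{o}^{-1}$ in $F_{\mathbb{R}}$, and invokes $[\mathcal{O}_F^{\times}:\mathcal{O}_F^{\times,+}]=2^nh_F/h_F^{+}$ to produce exactly the factors $\pi^{-n}$ and $h_F^{+}$. No appeal to $\mathcal{L}_F$ or to partial zeta values is needed here; the divisor sum $\lambda(b,1/2)$ is absorbed directly into the lattice summation.

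Conversely, the term $e^{2\rho_F^{-1}\mathcal{L}_F+n}\mathcal{L}_F^{*}\sqrt{D_F}$ arises from the \emph{constant-term} piece $L_{M,\chi_0}$, and this is where your proposal has a real gap. You claim that the residual logarithms $\log(N(c_{\mathfrak{a}})N(\mathfrak{q})^2/N(\mathfrak{a}))$ are ``controlled uniformly via the Minkowski-type bounds''---but they are not: for $\mathfrak{a}\in\mathcal{I}_K$ with $N(\mathfrak{a})$ near $M(0,n)\sqrt{D_K}$, this logarithm is of order $-\tfrac12\log D_K$ and can annihilate the main term $\log(\sqrt{D_K}/D_F)$. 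The paper handles this by a truncation: writing $\mathcal{N}_{\mathfrak{q}}(\mathfrak{a})^*=2^nN(\mathfrak{a})/(N(c_{\mathfrak{a}})N(\mathfrak{q})^2)$ and choosing $T=e^{-C_F}D_F^{-1}\sqrt{D_K}$ with $C_F=4\rho_F^{-1}\mathcal{L}_F+(\gamma+2\log 2)n$, one keeps only the trivial class in the range $\mathcal{N}_{\mathfrak{q}}(\mathfrak{a})^*\leq T$ (yielding the leading $\tfrac12\log(\sqrt{D_K}/D_F)$), and bounds the tail $\mathcal{N}_{\mathfrak{q}}(\mathfrak{a})^*>T$ using the smallness of the prefactor $\mathcal{N}_{\mathfrak{q}}(\mathfrak{a})\ll (\mathcal{N}_{\mathfrak{q}}(\mathfrak{a})^*)^{-1/2}$. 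It is precisely this tail that contributes the factor $e^{C_F/2}\approx e^{2\rho_F^{-1}\mathcal{L}_F+n}$, and the constant $\mathcal{L}_F^{*}$ packages the maximum of $|\log(\sqrt{D_K}/(D_F\mathcal{N}_{\mathfrak{q}}(\mathfrak{a})^*))|$ on the tail together with the $\sqrt{7n}$ from \eqref{zi}. Without this truncation step your lower bound for $L_{M,\chi_0}$ does not follow.
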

\begin{proof}
By Lemma \ref{Der} we have $E'\left(z,\frac12;\mathfrak{f}_{\mathfrak{a}}^{-1},\mathfrak{f}_{\mathfrak{a}}\right)=I_{M}(z;\mathfrak{f}_{\mathfrak{a}})+I_{E}(z;\mathfrak{f}_{\mathfrak{a}}),$
where
\begin{align*}
I_{M}(z;\mathfrak{f}_{\mathfrak{a}})&:=\frac{y^{\frac{\bm{\sigma}}2}}{N_{F/\mathbb{Q}}(\mathfrak{f}_{\mathfrak{a}})h_F}\Bigg\{
2\log{y^{\bm{\sigma}}}+\frac{4\Upsilon_{F,[\mathfrak{f}_{\mathfrak{a}}]^{-1}}}{\rho_F}
-2\log N_{F/\mathbb{Q}}(\mathfrak{f}_{\mathfrak{a}})-(\gamma+2\log2)n\Bigg\},\\
I_{E}(z;\mathfrak{f}_{\mathfrak{a}})&:=\frac{y^{\frac{\bm{\sigma}}2}}{N_{F/\mathbb{Q}}(\mathfrak{f}_{\mathfrak{a}})}\cdot\frac{2^{n+1}}{\rho_F\sqrt{D_F}}\sum_{b\in F^{\times}}\mathop{\sum\sum}_{\substack{(a,c)\in\mathfrak{f}_{\mathfrak{a}}^{-1}\mathfrak{o}^{-1}\times\mathcal{C}\\ac=b}}\mathbf{e}(bx)
\prod_{v\in\mathbf{J}_{\infty}}K_0\left(2\pi y_v|b_v|\right),
\end{align*}
where $\mathcal{C}:=\mathcal{O}_F^{\times}/\mathcal{O}_F^{\times,+},$ and
$$
\mathcal{L}_{F,[\mathfrak{f}_{\mathfrak{a}}]^{-1}}:=\gamma_{F,[\mathfrak{f}_{\mathfrak{a}}]^{-1}}-h^{-1}_F\gamma_F=\frac1{h_F}\sum_{\substack{\chi\in\widehat{Cl(F)}
\\ \chi\neq\chi_0}}\chi([\mathfrak{f}_{\mathfrak{a}}])L_F(\chi,1).
$$
By Proposition \ref{1} we can write $L_K(\chi,1/2)=L_{M,\chi}+L_{E,\chi},$ where
\begin{align*}
L_{M,\chi}&=\frac{2^{\frac{n}2}\sqrt{D_F}}{2D_K^{\frac14}}\cdot\frac{\rho_F}{[\mathcal{O}^{\times}_K:\mathcal{O}^{\times}_F]}\sum_{[\mathfrak{a}^{-1}]\in Cl(K)}\overline{\chi}([\mathfrak{a}])\sqrt{N_{F/\mathbb{Q}}(\mathfrak{f}_{\mathfrak{a}})}\cdot I_{M}(z;\mathfrak{f}_{\mathfrak{a}}),\\
L_{E,\chi}&=\frac{2^{\frac{n}2}\sqrt{D_F}}{2D_K^{\frac14}}\cdot\frac{\rho_F}{[\mathcal{O}^{\times}_K:\mathcal{O}^{\times}_F]}\sum_{[\mathfrak{a}^{-1}]\in Cl(K)}\overline{\chi}([\mathfrak{a}])\sqrt{N_{F/\mathbb{Q}}(\mathfrak{f}_{\mathfrak{a}})}\cdot I_{E}(z;\mathfrak{f}_{\mathfrak{a}}).
\end{align*}
We will start with bounding $I_{E}(z;\mathfrak{f}_{\mathfrak{a}})$ and further estimating $L_{E,\chi}:$
\begin{align*}
\Big|I_{E}(z;\mathfrak{f}_{\mathfrak{a}})\Big|&\leq\frac{y^{\frac{\bm{\sigma}}2}}{N(\mathfrak{f}_{\mathfrak{a}})}\cdot\frac{2^{n+1}}{\rho_F\sqrt{D_F}}\sum_{b\in F^{\times}}\mathop{\sum\sum}_{\substack{(a,c)\in\mathfrak{f}_{\mathfrak{a}}^{-1}\mathfrak{o}^{-1}\times\mathcal{C}\\ac=b}}
\prod_{v\in\mathbf{J}_{\infty}}\Big|K_0\left(2\pi y_v|b_v|\right)\Big|\\
&=\frac{y^{\frac{\bm{\sigma}}2}}{N(\mathfrak{f}_{\mathfrak{a}})}\cdot\frac{2^{n+1}[\mathcal{O}_F^{\times}:\mathcal{O}_F^{\times,+}]}{\rho_F\sqrt{D_F}}\sum_{b\in \mathfrak{f}_{\mathfrak{a}}^{-1}\mathfrak{o}^{-1}\cap F^{\times}}\prod_{v\in\mathbf{J}_{\infty}}\Big|K_0\left(2\pi y_v|b_v|\right)\Big|\\
\end{align*}
To compute $[\mathcal{O}_F^{\times}:\mathcal{O}_F^{\times,+}],$ let's fix an ordering $(\phi_1,\cdots,\phi_n)$ of $Hom(F,\overline{\mathbb{Q}})$ and consider the homomorphism
\begin{align*}
\tau:\; \mathcal{O}_F^{\times}\longrightarrow\{\pm1\}^n,\quad x\mapsto(\phi_1(x)/|\phi_1(x)|,\cdots,\phi_n(x)/|\phi_n(x)|).
\end{align*}
Then clearly, $\ker(\tau)=\mathcal{O}_F^{\times,+}.$ Hence $\Im(\tau)\simeq\mathcal{O}_F^{\times}/\mathcal{O}_F^{\times,+}.$ In fact, by Lemma 11.2 of \cite{CH88} we have
$$
\coker(\tau)\simeq \Gal(H^{+}_F/H_F)\simeq \ker(Cl(F)^{+}\rightarrow Cl(F)),
$$
where $H_F$ is the Hilbert class field of $F$ and $H^{+}_F$ is the narrow Hilbert class field of $F.$ By the above isomorphism, we have $[\mathcal{O}_F^{\times}:\mathcal{O}_F^{\times,+}]={2^nh_F}{h^{+}_F}^{-1}.$
Noting that $F$ is totally real, consider the canonical embedding
$$
j:\; F\longrightarrow F_{\mathbb{R}}:=\prod_{v\in\mathbf{J}_{\infty}}F_v\simeq\mathbb{R}^n.
$$
Since $\mathfrak{f}_\mathfrak{a}^{-1}\mathfrak{o}^{-1}\neq0,$ the lattice $\Gamma:=j(\mathfrak{f}_\mathfrak{a}^{-1}\mathfrak{o}^{-1})$ is complete in $F_{\mathbb{R}}.$ Let $\alpha_1,\cdots,\alpha_n$ be a $\mathbb{Z}-$basis of $\mathfrak{f}_\mathfrak{a}^{-1}\mathfrak{o}^{-1}.$ Let $\beta_v=j(\alpha_v),$ then we may assume that $\beta_v>0,$ $\forall$ $1\leq v\leq n.$ Since $\mathbb{Z}$ is a PID, we have $\Gamma=\mathbb{Z}\beta_1\oplus\cdots\oplus\mathbb{Z}\beta_n.$
Then a computation with determinants gives
\begin{align*}
\prod_{v\in\mathbb{J}_{\infty}}\beta_v^{-1}=\vol \left( \Gamma\right) ^{-1}=\frac1{\sqrt{D_F}}\cdot N_{F/{\mathbb{Q}}}(\mathfrak{f}_{\mathfrak{a}}\mathfrak{o})= N_{F/{\mathbb{Q}}}(\mathfrak{f}_{\mathfrak{a}})\sqrt{D_F}.
\end{align*}
For any $b\in\mathfrak{f}_{\mathfrak{a}}^{-1}\mathfrak{o}^{-1}\cap F^{\times},$ we may write $
j(b)=(m_1\beta_1,\cdots,m_n\beta_n),$ where $m_i\neq0,$ $\forall$ $1\leq v\leq n.$
Otherwise, we may assume $m_1=0.$ Then there exists some $v\in \mathbf{J}_{\infty}$ such that $b_v=0.$ Then the minimal polynomial of $b$ has 0 as its root. This is impossible unless $b=0.$

On the other hand, note that $K_0(x)<K_{1/2}(x)=\sqrt{\frac{\pi}{2x}}e^{-x},$ $\forall$ $x>0.$ Combining these results we have
\begin{align*}
\Big|I_{E}(z;\mathfrak{f}_{\mathfrak{a}})\Big|&\leq\frac{y^{\frac{\bm{\sigma}}2}}{N_{F/\mathbb{Q}}(\mathfrak{f}_{\mathfrak{a}})}\cdot\frac{2^{2n+1}h_F}
{\rho_Fh^{+}_F\sqrt{D_F}}\prod_{v\in\mathbf{J}_{\infty}}\left(\sum_{m=1}^{\infty}\sqrt{\frac1{y_v\beta_vm}}e^{-2\pi y_v\beta_vm}\right)\\
&=\frac{1}{N_{F/\mathbb{Q}}(\mathfrak{f}_{\mathfrak{a}})}\cdot\frac{2^{2n+1}h_F}{\rho_Fh^{+}_F\sqrt{D_F}}\prod_{v\in\mathbf{J}_{\infty}}\beta_v^{-1/2}
\prod_{v\in\mathbf{J}_{\infty}}\left(\sum_{m=1}^{\infty}\sqrt{\frac1{m}}e^{-2\pi y_v\beta_vm}\right)\\
&\leq\frac{1}{N_{F/\mathbb{Q}}(\mathfrak{f}_{\mathfrak{a}})}\cdot\frac{2^{2n+1}h_F}{\rho_Fh^{+}_F\sqrt{D_F}}\prod_{v\in\mathbf{J}_{\infty}}\beta_v^{-1/2}
\prod_{v\in\mathbf{J}_{\infty}}\frac1{e^{2\pi y_v\beta_v}-1}\\
&\leq\frac{1}{N_{F/\mathbb{Q}}(\mathfrak{f}_{\mathfrak{a}})}\cdot\frac{2^{2n+1}h_F}{\rho_Fh^{+}_F\sqrt{D_F}}\prod_{v\in\mathbf{J}_{\infty}}\beta_v^{-1/2}
\prod_{v\in\mathbf{J}_{\infty}}\frac1{2\pi y_v\beta_v}\\
&\leq \sqrt{N_{F/\mathbb{Q}}(\mathfrak{f}_{\mathfrak{a}})}\cdot\frac{2^{n+1}h_FD_F^{1/4}}{\pi^n\rho_Fh^{+}_F}y^{-\bm{\sigma}}.\\
\end{align*}
Therefore, by Lemma \ref{prim} and the definition of $\mathcal{I}_K,$ we have
\begin{align*}
\Big|I_{E}(z;\mathfrak{f}_{\mathfrak{a}})\Big|&\leq \sqrt{N_{F/\mathbb{Q}}(\mathfrak{f}_{\mathfrak{a}})}\cdot\frac{2^{n+1}h_FD_F^{1/4}}{\pi^n\rho_Fh^{+}_F}\cdot
\frac{2^nD_FN_{K/{\mathbb{Q}}}(\mathfrak{a})}{N_{F/{\mathbb{Q}}}(\mathfrak{f}_{\mathfrak{a}})N_{F/{\mathbb{Q}}}(\mathfrak{q})^2\sqrt{D_K}}\\
&\leq\frac{2^{2n+1} M(0,n)h_F{D_F}^{5/4}}{\pi^n\rho_Fh^{+}_F\sqrt{N_{F/\mathbb{Q}}(\mathfrak{f}_{\mathfrak{a}})}}, \quad\forall\; \mathfrak{a}\in\mathcal{I}_K.
\end{align*}
Note that by definition, each $\mathfrak{f}_{\mathfrak{a}}\in \mathcal{I}^{+}_F$ defined in \eqref{I_F}. Thus we have
\begin{align*}
\Big|L_{E,\chi}\Big|&\leq\frac{2^{\frac{n}2}\sqrt{D_F}}{2D_K^{\frac14}}\cdot\frac{\rho_F}{[\mathcal{O}^{\times}_K:\mathcal{O}^{\times}_F]}\sum_{\mathfrak{a}\in \mathcal{I}_K}\frac{2^{2n+1} M(0,n)h_F{D_F}^{5/4}}{\pi^n\rho_Fh^{+}_F}\\
&\leq\frac1{[\mathcal{O}^{\times}_K:\mathcal{O}^{\times}_F]}\cdot \frac{2^{5n/2}M(0,n)D_F^{7/4}h_F}{\pi^nh^{+}_F}\cdot h_KD_K^{-1/4},
\end{align*}
where $\mathcal{I}_K$ is defined in \eqref{I_K'}, so that \eqref{cm3} is available here.

On the other hand, we will give a lower bound for $I_{M}(z;\mathfrak{f}_{\mathfrak{a}})$ and $L_{M,\chi}$. Recall the definition of $\mathcal{L}_{F}$ given in \eqref{gamma}, then clearly for any $\mathfrak{a}\in\mathcal{I}_K,$ one has
$$
\mathcal{L}_{F}\geq\frac1{h_F}\sum_{\chi\in\widehat{Cl(F)}\setminus\{\chi_0\}}\Big|L_F(\chi,1)\Big|\geq \Big|\mathcal{L}_{F,[\mathfrak{f}_{\mathfrak{a}}]^{-1}}\Big|.
$$
Hence by the expression for $y^{\bm{\sigma}}$ given in Lemma \ref{prim} we obtain
\begin{equation}\label{4.1}
I_{M}(z;\mathfrak{f}_{\mathfrak{a}})\geq\sqrt{\frac{N_{K/{\mathbb{Q}}}(c_{\mathfrak{a}})N_{F/{\mathbb{Q}}}(\mathfrak{q})^2}
{N_{K/{\mathbb{Q}}}(\mathfrak{a})N_{F/{\mathbb{Q}}}(\mathfrak{f}_{\mathfrak{a}})}}\Bigg\{
2\log\frac{\sqrt{D_K}}{2^nD_F}-C_{F,{\mathfrak{a}}}\Bigg\}\cdot\frac{D_K^{1/4}}{\sqrt{2^nD_F}h_F},
\end{equation}
where the tail $C_{F,{\mathfrak{a}}}$ is defined as
$$
C_{F,{\mathfrak{a}}}=2\log\frac{N_{K/{\mathbb{Q}}}(\mathfrak{a})}{N_{K/{\mathbb{Q}}}(c_{\mathfrak{a}})N_{F/{\mathbb{Q}}}(\mathfrak{q})^2}
+\frac{4\mathcal{L}_{F}}{\rho_F}
+(\gamma+2\log2)n.
$$
Combining \eqref{4.1}  with \eqref{cm3} yields
\begin{align*}
L_{M,\chi_0}&\geq\frac{\rho_F}{2[\mathcal{O}^{\times}_K:\mathcal{O}^{\times}_F]\cdot h_F}\sum_{[\mathfrak{a}^{-1}]\in Cl(K)}\sqrt{\frac{N_{K/{\mathbb{Q}}}(c_{\mathfrak{a}})N_{F/{\mathbb{Q}}}(\mathfrak{q})^2}
	{N_{K/{\mathbb{Q}}}(\mathfrak{a})}}\\
&\qquad\times \Bigg\{
2\log\frac{\sqrt{D_K}}{2^nD_F}-2\log\frac{N_{K/{\mathbb{Q}}}(\mathfrak{a})}{N_{K/{\mathbb{Q}}}(c_{\mathfrak{a}})N_{F/{\mathbb{Q}}}(\mathfrak{q})^2}
-\frac{4\mathcal{L}_{F}}{\rho_F}
-(\gamma+2\log2)n\Bigg\}\\
&=\frac12\sum_{\mathfrak{a}\in\mathcal{I}_K}\mathcal{N}_{\mathfrak{q}}(\mathfrak{a})\cdot \Bigg\{
2\log\frac{\sqrt{D_K}}{D_F}-2\log\frac{2^nN_{K/{\mathbb{Q}}}(\mathfrak{a})}{N_{K/{\mathbb{Q}}}(c_{\mathfrak{a}})N_{F/{\mathbb{Q}}}(\mathfrak{q})^2}
-C_F\Bigg\},
\end{align*}
where $\mathcal{N}_{\mathfrak{q}}(\mathfrak{a})=\frac{\rho_F\cdot N_{F/{\mathbb{Q}}}(\mathfrak{q})}{[\mathcal{O}^{\times}_K:\mathcal{O}^{\times}_F]\cdot h_F}\cdot \sqrt{\frac{N_{K/{\mathbb{Q}}}(c_{\mathfrak{a}})}
	{N_{K/{\mathbb{Q}}}(\mathfrak{a})}},$ and $C_{F}=4\rho_F^{-1}\mathcal{L}_{F}+(\gamma+2\log2)n.$

Write $\mathcal{N}_{\mathfrak{q}}(\mathfrak{a})^*=2^nN_{K/{\mathbb{Q}}}(c_{\mathfrak{a}})^{-1}N_{F/{\mathbb{Q}}}(\mathfrak{q})^{-2}N_{K/{\mathbb{Q}}}(\mathfrak{a}),$ then we can introduce an undetermined parameter $T$ satisfying $0<T\leq M(0,n)\sqrt{D_K}$ such that 
\begin{align*}
L_{M,\chi_0}&\geq \sum_{\substack{\mathfrak{a}\in\mathcal{I}_K\\ \mathcal{N}_{\mathfrak{q}}(\mathfrak{a})^*\leq T}}\mathcal{N}_{\mathfrak{q}}(\mathfrak{a})\cdot\left(
\log\frac{\sqrt{D_K}}{D_F\cdot \mathcal{N}_{\mathfrak{q}}(\mathfrak{a})^*}-\frac12 C_F\right)-L_{M,\chi_0}^T,
\end{align*}
where the truncation term $L_{M,\chi_0}^T$ is defined as
$$
L_{M,\chi_0}^T= \sum_{\substack{\mathfrak{a}\in\mathcal{I}_K\\ \mathcal{N}_{\mathfrak{q}}(\mathfrak{a})^*\geq T}}\mathcal{N}_{\mathfrak{q}}(\mathfrak{a})\cdot \Bigg\{
\Big|\log\frac{\sqrt{D_K}}{D_F\cdot \mathcal{N}_{\mathfrak{q}}(\mathfrak{a})^*}\Big|
+\frac12C_F\Bigg\}.
$$
We can take $T=
T_{K/F}=\min\Big\{\frac{e^{-C_F}\sqrt{D_K}}{D_F},\;  2^nM(0,n)\sqrt{D_K}\Big\}.$
Then due to \eqref{zi*} and \eqref{I_F} one has $T_{K/F}=e^{-C_F}D_F^{-1}\sqrt{D_K}$ if $n\geq9.$ Note that the choice of $T_{K/F}$ above implies that 
\begin{align*}
\log\frac{\sqrt{D_K}}{D_F\cdot \mathcal{N}_{\mathfrak{q}}(\mathfrak{a})^*}\geq C_F,\quad \text{$\forall$ $\mathfrak{a}\in\mathcal{I}_K$ such that $\mathcal{N}_{\mathfrak{q}}(\mathfrak{a})^*\leq T_{K/F}$.}
\end{align*}
Noting that $\mathcal{N}_{\mathfrak{q}}(\mathfrak{a})^*\leq M(0,n)\sqrt{D_K},$ one then define
$$
\Phi_F=\max_{T_{K/F}\leq\mathcal{N}_{\mathfrak{q}}(\mathfrak{a})^*\leq 2^nM(0,n)\sqrt{D_K}}\Big|\log\frac{\sqrt{D_K}}{D_F\cdot \mathcal{N}_{\mathfrak{q}}(\mathfrak{a})^*}\Big|+\frac12C_F.
$$
Then by monotonicity we obtain 
\begin{equation}\label{phi}
\Phi_F=\max\Big\{\frac32C_F,\ \log 2^nM(0,n)D_F+\frac12C_F\Big\}\leq \log M(0,n)D_F+2C_F.
\end{equation}
When $e^{-C_F}D_F^{-1}\sqrt{D_K}>2^nM(0,n)\sqrt{D_K}$ (e.g. when $n\geq9$), this implies that
\begin{align*}
L_{M,\chi_0}^{T_{K/F}}
&\leq\frac{2^{n/2}\rho_F}{[\mathcal{O}^{\times}_K:\mathcal{O}^{\times}_F]\cdot h_F}\sum_{\substack{\mathfrak{a}\in\mathcal{I}_K\\ T_{K/F}\leq\mathcal{N}_{\mathfrak{q}}(\mathfrak{a})^*\leq 2^nM(0,n)\sqrt{D_K}}}\frac1{\sqrt{\mathcal{N}_{\mathfrak{q}}(\mathfrak{a})^*}}\cdot\Phi_F\\
&=\frac{\rho_F\Phi_Fe^{C_F/2}}{[\mathcal{O}^{\times}_K:\mathcal{O}^{\times}_F]\cdot h_F}\sqrt{D_F}\cdot h_KD_K^{-\frac14}.
\end{align*}
And when $e^{-C_F}D_F^{-1}\sqrt{D_K}\leq2^nM(0,n)\sqrt{D_K}$ (according to our discussion before, this might happen only when $n\leq8$), we just take $L_{M,\chi_0}^{T_{K/F}}=0.$ Hence, we have
\begin{align*}
L_{M,\chi_0}&\geq\frac12\sum_{\substack{\mathfrak{a}\in\mathcal{I}_K\\ \mathcal{N}_{\mathfrak{q}}(\mathfrak{a})^*\leq T_{K/F}}}\mathcal{N}_{\mathfrak{q}}(\mathfrak{a})\cdot
\log\frac{\sqrt{D_K}}{D_F\cdot \mathcal{N}_{\mathfrak{q}}(\mathfrak{a})^*}-L_{M,\chi_0}^{T_{K/F}}\\
&\geq \frac12\mathcal{N}_{\mathfrak{q}}(\mathfrak{q})\cdot
\log\frac{\sqrt{D_K}}{D_F}-\frac{\rho_F\Phi_Fe^{C_F/2}}{[\mathcal{O}^{\times}_K:\mathcal{O}^{\times}_F]\cdot h_F}\sqrt{D_F}\cdot h_KD_K^{-\frac14}.
\end{align*}
Since $\mathcal{N}_{\mathfrak{q}}(\mathfrak{q})={[\mathcal{O}^{\times}_K:\mathcal{O}^{\times}_F]\cdot\rho_F h_F}^{-1},$ then involving the upper bound of $|L_{E,\chi}|$ developed as before we have
\begin{equation}\label{z}
L_K\left(\chi_0,\frac12\right)\geq \frac{\rho_F}{[\mathcal{O}^{\times}_K:\mathcal{O}^{\times}_F]\cdot h_F}\left(\frac12
\log\frac{\sqrt{D_K}}{D_F}-\Phi'_F\cdot h_KD_K^{-1/4}\right).
\end{equation}
where
$$
\Phi'_F:=\frac{2^{5n/2}M(0,n)D_F^{7/4}h_F^2}{\pi^n\rho_Fh^{+}_F}+\Phi_Fe^{C_F/2}\sqrt{D_F}.
$$
By \eqref{zi*} and the usual Minkowski constant $M(n)$ one can obtain an elementary computation of $M(0,n),$ substituting this bound into in \eqref{phi} leads to the inequality
$$
\Phi_F\leq 2C_F+\log e^{-(2\gamma+\log 2\pi)n+\sqrt{7n}+4}D_F,
$$
from which one then has
\begin{equation}\label{y}
\Phi_F\leq 4\rho_F^{-1}\mathcal{L}_{F}+\log D_F+(3\log2-\log\pi)n+\sqrt{7n}+4.
\end{equation}
Then the proof follows from the estimate \eqref{z} and inequalities \eqref{y}.
\end{proof}

From Proposition \ref{fin} one sees naturally that we need an upper bound for $|L_F(\chi,1)|$ to make the inequality \eqref{4.2} more explicit. The desired estimate is provided in the following lemma.
\begin{lemma}\label{average}
Let $F/{\mathbb{Q}}$ be any field extention of degree $n<+\infty.$ Let $\chi$ be any nontrivial primitive Grossencharacter of modulus $\mathfrak{m}.$ Then we have
\begin{equation}\label{3.45}
\Big|L_F(\chi,1)\Big|\leq 2\Big[\frac{e}{2n}\log \left(D_FN_{F/{\mathbb{Q}}}(\mathfrak{m})\right)\Big]^n.
\end{equation}
Moreover, if for any $\chi\in\widehat{Cl(F)}\setminus\{\chi_0\},$ we have
\begin{equation}\label{3.5}
\Big|L_F(\chi,1)\Big|\leq \left(1+\frac{\gamma+2\log2-\log\pi}2n+\frac12\log D_F
+\rho_F^{-1}\gamma_F\right)\cdot\rho_F,
\end{equation}
where $\rho_F:=Res_{s=1}\zeta_F(s)$ and $\gamma_F$ is the Euler-Kronecker constant of $F/{\mathbb{Q}}.$
\end{lemma}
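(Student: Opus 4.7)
My plan is to derive both inequalities from an approximate functional equation for $L_F(s,\chi)$ at $s=1$, combined with effective counting of integral ideals. The two bounds differ in which ideal-counting estimate one feeds in: bound (1) uses a purely multiplicative estimate that is uniform over all Grossencharacters, while bound (2) exploits the fact that unramified $\chi$ lets one choose the cutoff at the smaller scale $\sqrt{D_F}$ (rather than $\sqrt{D_FN(\mathfrak{m})}$) and then invokes the sharper Dedekind-zeta asymptotic to pick up $\rho_F$ and $\gamma_F$ cleanly.

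For part (1), I would insert a smooth Mellin cutoff $V$ with $V(0)=1$ and rapid decay, writing
\begin{equation*}
L_F(\chi,1)=\sum_{\mathfrak{a}}\frac{\chi(\mathfrak{a})}{N(\mathfrak{a})}V\!\left(\frac{N(\mathfrak{a})}{Y}\right)+\frac{1}{2\pi i}\int_{(-c)}L_F(1+s,\chi)\widetilde V(s)Y^{s}\,ds,
\end{equation*}
and then apply the functional equation $\Lambda(s,\chi)=W(\chi)\Lambda(1-s,\bar\chi)$ (with analytic conductor $D_FN(\mathfrak{m})$) to the shifted integral to obtain a dual sum over $\bar\chi$-values with cutoff at $D_FN(\mathfrak{m})/Y$. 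Choosing $Y=\sqrt{D_FN(\mathfrak{m})}$ balances the two sums, and the triangle inequality reduces everything to estimating $\sum_{N(\mathfrak{a})\leq cX}N(\mathfrak{a})^{-1}$ for $X=\sqrt{D_FN(\mathfrak{m})}$. The shape $2[\tfrac{e}{2n}\log(D_FN(\mathfrak{m}))]^{n}$ then emerges from Rankin's trick: for $\sigma>1$,
\begin{equation*}
\sum_{N(\mathfrak{a})\leq X}\frac{1}{N(\mathfrak{a})}\leq X^{\sigma-1}\zeta_F(\sigma)\leq X^{\sigma-1}\zeta(\sigma)^{n}\leq X^{\sigma-1}\!\left(\frac{1}{\sigma-1}+1\right)^{\!n},
\end{equation*}
optimised at $\sigma-1=n/\log X$ and compared against Stirling.

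For part (2), I specialise the AFE to $\mathfrak{m}=\mathcal{O}_F$ and $Y=\sqrt{D_F}$. Here both smoothed sums cut off at $\sqrt{D_F}$, and I replace the crude Rankin bound above by the effective Dedekind asymptotic
\begin{equation*}
\sum_{N(\mathfrak{a})\leq X}\frac{1}{N(\mathfrak{a})}=\rho_F\log X+\gamma_F+O\!\left(X^{-1/n}\right),
\end{equation*}
which follows by partial summation from the Minkowski--Landau ideal count $\#\{\mathfrak{a}:N(\mathfrak{a})\leq X\}=\rho_FX+O(X^{1-1/n})$. Setting $X=\sqrt{D_F}$ immediately yields the $\tfrac12\rho_F\log D_F+\gamma_F$ contribution visible in the stated bound. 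The remaining additive constant $(1+\tfrac12 n(\gamma+2\log 2-\log\pi))\rho_F$ should come from the residue at $s=0$ during the contour shift combined with the logarithmic derivative at $s=1$ of the archimedean factor $\Gamma_{\mathbb{R}}(s)^n=\pi^{-ns/2}\Gamma(s/2)^n$; here the digamma evaluation $\psi(1/2)=-\gamma-2\log 2$ (already computed in the proof of Lemma \ref{Der}) interacts with the $\pi^{-s/2}$ factor to produce precisely the constants displayed.

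The main obstacle will be the explicit bookkeeping of constants. The factor $[\tfrac{e}{2n}]^n$ in (1) is cleanest via Rankin plus Stirling and should go through after choosing $V$ judiciously (for instance $V(y)=e^{-y}$ so that $\widetilde V(s)=\Gamma(s)$ pairs naturally with the archimedean gamma factors, and one applies the crude bound $\zeta_F(\sigma)\leq \zeta(\sigma)^n$). For (2), matching the precise coefficient $(\gamma+2\log 2-\log\pi)/2$ with the exact sign shown in the statement will require a direct residue computation rather than any asymptotic inequality — this is where I expect the fiddliest work, because the sign is sensitive to whether the weight is read as $\widetilde V(s)$ or $\widetilde V(-s)$ in the dual sum, and to the normalisation of the completed zeta factor $\mathcal{Z}_{F,\infty}(s)$ used in Lemma \ref{0}.
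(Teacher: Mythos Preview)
Your overall strategy for part~(1) is close in spirit to the paper's, but the execution is different and the paper's route is cleaner. Rather than writing out an approximate functional equation with a smooth cutoff and then truncating ideal sums, the paper bounds $|\Lambda_F(\chi,1)|$ directly by the $\theta$-integral $\int_1^\infty|\Theta_\chi(x)|(1+x^{-1})\,dx$, observes that $|\Theta_\chi(x)|\le\Theta(x)$ (the theta series for $\zeta_F$, since the inverse Mellin transform of the archimedean factor is positive), and then dominates $\int_1^\infty x^{s-1}\Theta(x)\,dx$ by the full Mellin integral to obtain
\[
|L_F(\chi,1)|\le 2\,(D_FN(\mathfrak m))^{(s-1)/2}\,\frac{L_{F,\infty}(\chi,s)}{L_{F,\infty}(\chi,1)}\,\zeta(s)^n,\qquad s>1.
\]
This is morally your Rankin step, but done without any truncation error. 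The constant $[e/(2n)]^n$ then comes from choosing $s_0=1+2n/\log(D_FN(\mathfrak m))$ \emph{together with} a convexity argument: the paper writes the gamma-ratio times $\zeta^n$ as $\xi(s)^nG(s)^{b+r_2}$ and shows $\log\xi$ and $\log G$ are convex on $[1,6]$, so the whole thing is bounded by its value at an endpoint, which is $1$. Your plan does not address this gamma-factor ratio at all, and without it the exact constant will not fall out.

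For part~(2) there is a genuine gap. The asymptotic
\[
\sum_{N(\mathfrak a)\le X}\frac{1}{N(\mathfrak a)}=\rho_F\log X+\gamma_F+O(X^{-1/n})
\]
has an implied constant that depends on $F$ --- in fact it grows with $D_F$ (the lattice-point error in the Minkowski--Landau count is of order $D_F^{1/n}X^{1-1/n}$ or worse). At $X=\sqrt{D_F}$ this error term is not negligible and cannot be absorbed into the stated bound, so feeding ideal counting into the AFE will not produce the clean inequality with the exact constants $\rho_F,\gamma_F$. The paper avoids this entirely: starting from the same $\theta$-integral bound $|\Lambda_F(\chi,1)|\le I_F:=\int_1^\infty\Theta(x)(1+x^{-1})\,dx$, it recognises $I_F$ as $\frac{1}{2\pi i}\int_{(c)}\widetilde\Lambda_F(s)\,ds$ with $\widetilde\Lambda_F(s)=\Lambda_F(s)(1/s+1/(s-1))$, notes the antisymmetry $\widetilde\Lambda_F(1-s)=-\widetilde\Lambda_F(s)$, and shifts the contour to conclude $I_F=\operatorname{Res}_{s=1}\widetilde\Lambda_F(s)$ \emph{exactly}. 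The displayed constants, including the $\psi(1/2)=-\gamma-2\log 2$ contribution you anticipate, then drop out of computing $(\log\Lambda_F)'(1)$ --- no error terms, no ideal counting. This residue trick is the missing idea in your proposal.
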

\begin{proof}
Denote by $(r_1, r_2)$ the signature of $F/{\mathbb{Q}}.$ Let $b$ be the number of real places of $F$ dividing the infinite part of the conductor of $\chi.$ Let $a:=r_1-b.$ Then $a\geq0$ and $b\geq0.$
Consider the completed Hecke L-function associated with $\chi:$
\begin{equation}\label{fe}
\Lambda_F(\chi,s):=\left(D_FN_{F/{\mathbb{Q}}}(\mathfrak{m})\right)^{s/2}L_{F,\infty}(\chi,s)L_F(\chi,s),
\end{equation}
where $L_{F,\infty}(\chi,s)$ is the infinite part of $L_F(\chi,s),$ i.e.,
\begin{equation}\label{fe1}
L_{F,\infty}(\chi,s):=2^{-r_2s}\pi^{-ns/2}\Gamma(s/2)^a\Gamma((s+1)/2)^b\Gamma(s)^{r_2}.
\end{equation}
Then $\Lambda_F(\chi,s)$ is an entire function satisfying the functional equation:
\begin{equation}\label{4.3}
\Lambda_F(\chi,s)=W_{\chi}\Lambda_F(\overline{\chi},1-s),
\end{equation}
where $W_{\chi}\in\mathbb{C}$ is the root number with $\Big|W_{\chi}\Big|=1.$

Let $\Theta_{\chi}(x)$ be the inverse Mellin transform of $\Lambda_F(\chi,s).$ Then one can verify easily that \eqref{4.3} yields the functional equation of $\Theta_{\chi}:$
\begin{equation}\label{4.4}
\Theta_{\chi}(x)=x^{-1}W_{\chi}\Theta_{\overline{\chi}}(x^{-1}).
\end{equation}
Then by Mellin transform and \eqref{4.4} we have the integral representation of $\Lambda_F(\chi,s):$
\begin{align*}
\Lambda_F(\chi,s)=\int_{0}^{\infty}x^{s-1}\Theta_{\chi}(x)dx=\int_{1}^{\infty}x^{s-1}\Theta_{\chi}(x)dx+W_{\chi}\int_{1}^{\infty}x^{-s}\Theta_{\overline{\chi}}(x)dx.
\end{align*}
Since $\Gamma(s)$ is the Mellin transform of the function $f(x)=e^{-x},$ ($x>0$), the inverse Mellin transform of the $g_{\lambda,\mu}(s):=\Gamma(\lambda s+\mu),$ ($\forall$ $\lambda>0,$ $\mu\in \mathbb{R}$) is
$$
\left(\mathcal{M}^{-1}g_{\lambda,\mu}\right)(x)=\lambda^{-1}x^{\mu/\lambda}e^{-x^{1/{\lambda}}},\quad \lambda>0,\; \mu\in \mathbb{R}.
$$
Clearly, $\mathcal{M}^{-1}g_{\lambda,\mu}$ is positive. Since the parameters $a,$ $b$ and $n$ and nonnegative integers, so we can regard $\Gamma(s/2)^a\Gamma((s+1)/2)^b\Gamma(s)^{r_2}$ as a product of Gamma functions of the form $\Gamma(\lambda s),$ $\lambda>0.$ Let $\Theta_{\chi,\infty}$ denote the inverse Mellin transform of $2^{r_2s}\pi^{ns/2}L_{F,\infty}(\chi,s)=\Gamma(s/2)^a\Gamma((s+1)/2)^b\Gamma(s)^{r_2},$ then we have
$$
\Theta_{\chi,\infty}(x)=\left(\mathcal{M}^{-1,\ast a}g_{1/2,0}\ast\mathcal{M}^{-1,\ast b}g_{1/2,1/2}\ast\mathcal{M}^{-1,\ast{r_2}}g_{1,0}\right)(x),\quad\forall\; x>0,
$$
where for any $m\in\mathbb{N}_{+},$ $\mathcal{M}^{-1,\ast m}g$ denotes the $m-$fold convolution of $\mathcal{M}^{-1}g,$ the inverse Mellin transform of the function $g.$ Hence $\Theta_{\chi,\infty}(x)>0,$ $\forall$ $x>0.$

Note that by definition one has
\begin{align*}
\Theta_{\chi}(x)=\frac1{2\pi i}\int_{c-i\infty}^{c+i\infty}x^{-s}\Lambda_F(\chi,s)ds=\sum_{0\neq\mathfrak{a}\subset\mathcal{O}_F}\chi(\mathfrak{a})
\Theta_{\chi,\infty}\left(\frac{2^{r_2}\pi^{n/2}N_{F/{\mathbb{Q}}}(\mathfrak{a})}{\sqrt{D_FN_{F/{\mathbb{Q}}}(\mathfrak{m})}}\cdot x\right)
\end{align*}
By the definition of $a$ and $b$ we see $\Theta_{\overline{\chi},\infty}=\Theta_{\chi,\infty}.$ Since $\Theta_{\chi,\infty}$ is positive, we have $\Theta_{\overline{\chi}}=\overline{\Theta_{\chi}}.$ Thus for any $s>1,$ one obtains
\begin{align*}
\Big|\Lambda_F(\chi,1)\Big|&\leq\Big|\int_{1}^{\infty}\Theta_{\chi}(x)dx\Big|+\Big|\int_{1}^{\infty}x^{-1}\Theta_{\overline{\chi}}(x)dx\Big|\\
&\leq 2\Bigg|\int_{1}^{\infty}\sum_{0\neq\mathfrak{a}\subset\mathcal{O}_F}\chi(\mathfrak{a})
\Theta_{\chi,\infty}\left(\frac{2^{r_2}\pi^{n/2}N_{F/{\mathbb{Q}}}(\mathfrak{a})}{\sqrt{D_FN_{F/{\mathbb{Q}}}(\mathfrak{m})}}\cdot x\right)dx\Bigg|\\
&\leq2\sum_{0\neq\mathfrak{a}\subset\mathcal{O}_F}\int_{0}^{\infty}x^{s-1}
\Theta_{\chi,\infty}\left(\frac{2^{r_2}\pi^{n/2}N_{F/{\mathbb{Q}}}(\mathfrak{a})}{\sqrt{D_FN_{F/{\mathbb{Q}}}(\mathfrak{m})}}\cdot x\right)dx\\
&=2\left(D_FN_{F/{\mathbb{Q}}}(\mathfrak{m})\right)^{s/2}L_{F,\infty}(\chi,s)\zeta_{F}(s).
\end{align*}
From this inequality we obtains the upper bound for $L_F(\chi,1):$
\begin{equation}\label{4.5}
\Big|L_F(\chi,1)\Big|\leq2\left(D_FN_{F/{\mathbb{Q}}}(\mathfrak{m})\right)^{\frac{s-1}2}\frac{L_{F,\infty}(\chi,s)}{L_{F,\infty}(\chi,1)}\cdot\zeta(s)^n,
\quad\forall\; s>1.
\end{equation}
Let $H(s):=s^{a+r_2}(s-1)^n\frac{L_{F,\infty}(\chi,s)}{L_{F,\infty}(\chi,1)}\cdot\zeta(s)^n=\xi(s)^nG(s)^{b+r_2},$ where
\begin{align*}
\xi(s):=s(s-1)\pi^{-s/2}\Gamma(s/2)\zeta(s),\quad\text{and}\;  G(s):=\frac{\sqrt{\pi}\Gamma((s+1)/2)}{s\Gamma(s/2)}.
\end{align*}
Then by \eqref{4.5} we have $\Big|L_F(\chi,1)\Big|\leq{2\left(D_FN_{F/{\mathbb{Q}}}(\mathfrak{m})\right)^{\frac{s_0-1}2}}{(s_0-1)^n}^{-1}\cdot s_0^{-a-r_2}H(s_0),$
where $s_0:=1+2n[\log \left(D_FN_{F/{\mathbb{Q}}}(\mathfrak{m})\right)]^{-1}.$ Recall that we have the well known Hadamard decomposition for entire functions
\begin{align*}
\text{$\frac1{\Gamma(s)}=se^{\gamma s}\prod_{n=1}^{\infty}\left(1+\frac{s}{n}\right)e^{-s/n},$ and\; $\xi(s)=e^{Bs}\prod_{\rho}\left(1-\frac{s}{\rho}\right)e^{s/\rho},$}
\end{align*}
where $B\in\mathbb{C}$ is a constant and $\rho$ runs through nontrivial zeros of $\zeta(s).$ Then
$$
{G'(s)}/{G(s)}=\left(\log G(s)\right)'=\sum_{j=1}^{\infty}(-1)^{j}(j+s)^{-1}\leq0,\quad\forall\; s>0.
$$
This gives that $\left(\log G\right)''(s)=\sum_{j=1}^{\infty}(-1)^{j-1}(j+s)^{-2}>0$ when $s>0.$ Hence $\log G$ is convex when $s>0.$ Now we work out $B$ and thus see for every nontrivial zero $\rho=\sigma+i\tau,$ we have $|\tau|\geq6.$ In fact, by definition, $B=\xi(0)'/\xi(0).$ The functional equation $\xi(s)=\xi(1-s)$ gives
$\xi(s)'/\xi(s)=-\xi(1-s)'/\xi(1-s).$ Thus $B=-\xi(1)'/\xi(1).$ Therefore,
$$
B=\frac12\log\pi-\frac12\frac{\Gamma'}{\Gamma}\left(\frac32\right)-\lim_{s\mapsto1^{+}}\left(\frac{\zeta'}{\zeta}(s)+\frac1{s-1}\right)
=-1-\frac{\gamma}{2}+\frac12\log{4\pi}.
$$
On the other hand, by $B=-\xi(1)'/\xi(1)$ and symmetry of the nontrivial zeros, 
$$
B=-\frac12\sum_{\rho}\left(\frac1{1-\rho}+\frac1{\rho}\right)=-\sum_{\rho=\sigma+i\tau}\mathfrak{Re}\frac1{\rho}
=-\sum_{\rho=\sigma+i\tau}\frac{2\sigma}{\sigma^2+\tau^2}.
$$
Then for any $\rho=\sigma+i\tau$ with $1/2\leq\rho\leq1,$ one has $-B\geq2\sigma\cdot(\sigma^2+\tau^2)^{-1},$ which gives the lower bound $|\tau|\geq\sqrt{\frac{2\sigma}{-B}-\sigma^2}\geq\sqrt{\frac{1}{-B}-\frac14}\geq6.$
Thus the function
$$
h(s):=\sum_{\rho=\sigma+i\tau}\frac1{s-\rho}=\sum_{\substack{\rho=\sigma+i\tau\\ \tau\geq0}}\frac{s-\sigma}{(s-\sigma)^2+\tau^2},\quad 1\leq s\leq6,
$$
is increasing. So $(\log\xi(s))'=\xi(s)'/\xi(s)=B+h(1)+h(s)$ is increasing when $1\leq s\leq 6.$ Then $\log\xi(s)$ is convex when $1\leq s\leq6.$
Therefore we have $\log\left(s^{-a-r_2}H(s)\right)$ is convex when $1\leq s\leq6.$

By \cite{Zi81} there exists an integral ideal $\mathfrak{a}$ such that $N_{F/{\mathbb{Q}}}(\mathfrak{a})\leq M(r_1,r_2)\sqrt{D_F}.$ Then clearly
$$
\log \left(D_FN_{F/{\mathbb{Q}}}(\mathfrak{m})\right)\geq\log D_F\geq M(r_1, r_2)^{-1}.
$$
Recall that $M(r_1, r_2)\leq M(n):=\frac{4^{r_2}n!}{\pi^{r_2}n^n}.$ Hence
\begin{align*}
s_0&=1+2n[\log \left(D_FN_{F/{\mathbb{Q}}}(\mathfrak{m})\right)]^{-1}\leq1+2nM(r_1, r_2) \leq1+{2n}\cdot\frac{4^{n}n!}{\pi^{n}n^n}\leq6.
\end{align*}
By convexity we have $H(s_0)\leq\max\{H(1), H(6)\}=1.$

Let $\chi\in\widehat{Cl(F)}$ is a nontrivial Hilbert character. One has that
$$
\text{$Cl(F)\simeq I_F/{I_F^{\infty}F^{\times}},$ where $I_F^{\infty}:=\prod_{\mathfrak{p}\mid\infty}F^{\times}_{\mathfrak{p}}\times\prod_{\mathfrak{p}\nmid\infty}\mathcal{O}^{\times}_{F, \mathfrak{p}}.$}
$$
So in this case $b=0,$ and $a=r_1,$ and $\mathfrak{m}=\mathcal{O}_F.$ Then we have the completed L-function \eqref{fe}, where $N_{F/{\mathbb{Q}}}(\mathfrak{m})=1$ and \eqref{fe1} becomes
\begin{align*}
L_{F,\infty}(\chi,s):=2^{-r_2s}\pi^{-ns/2}\Gamma(s/2)^{r_1}\Gamma(s)^{r_2}.
\end{align*}
As before, noting that $\Theta_{\overline{\chi}}=\overline{\Theta_{\chi}},$ Mellin transform and functional equations imply
\begin{equation}\label{fe3}
\Big|\Lambda_F(\chi,1)\Big|\leq\int_{1}^{\infty}|\Theta_{\chi}(x)|(1+x^{-1})dx\leq \int_{1}^{\infty}|\Theta(x)|(1+x^{-1})dx,
\end{equation}
where let $\Theta_{\infty}$ is the inverse Mellin transform of $\Gamma(s/2)^{r_1}\Gamma(s)^{r_2},$ and
$$
\Theta(x):=\sum_{0\neq\mathfrak{a}\subset\mathcal{O}_F}
\Theta_{\infty}\left(\frac{2^{r_2}\pi^{n/2}N_{F/{\mathbb{Q}}}(\mathfrak{a})}{\sqrt{D_F}}\cdot x\right).
$$
Let $c>10,$ and $\Lambda_F(s):=D_F^{s/2}2^{-r_2s}\pi^{-ns/2}\Gamma(s/2)^{r_1}\Gamma(s)^{r_2}\zeta_F(s).$ Then by \eqref{fe3}, 
\begin{align*}
\Big|\Lambda_F(\chi,1)\Big|&\leq\int_{1}^{\infty}\left(\frac1{2\pi i}\int_{c-i\infty}^{c+\infty}\Lambda_F(s)x^{-s}ds\right)(1+x^{-1})dx
=\frac1{2\pi i}\int_{c-i\infty}^{c+\infty}\widetilde{\Lambda}_F(s)ds,
\end{align*}
where $\widetilde{\Lambda}_F(s):=\Lambda_F(s)\cdot\left(\frac1s+\frac1{s-1}\right).$ Denote the right hand side by $I_F.$ The functional equation $\Lambda_F(s)=\Lambda_F(1-s)$ gives us that $\widetilde{\Lambda}_F(s)=-\widetilde{\Lambda}_F(1-s).$

Recall that combining the elementary bound and functional equation of $\zeta_F(s)$ and by the Phragm\'{e}n-Linderl\"{o}f theorem we have the fact that
$$
\zeta_F(\sigma+it)\ll |t|^{\frac{1-\sigma}2}\log |t|,\quad 0\leq \sigma\leq 1,\; |t|\geq2.
$$
By functional we have $\zeta_F(\sigma+it)\ll |t|^{1/2-\sigma}\log |t|,\quad \sigma\leq 0,\; |t|\geq2.$ Note that in the area $\mathcal{S}:=\{s=\sigma+it:\; 1-c\leq\sigma\leq c, |t|\geq1\}$ we have uniformly that
$$
\Gamma(s)=\sqrt{2\pi}e^{-\frac{\pi}{2}|t|}e^{it(\log|t|-1)}e^{\frac{i\pi t}{2|t|}(\sigma-1/2)}\left(1+O_c\left(|t|^{-1}\right)\right).
$$
So $\widetilde{\Lambda}_F(s)$ decays exponentially in $\mathcal{S}$ as $|t|\mapsto\infty.$ Thus shifting the contour one gets
\begin{align*}
I_F&=Res_{s=1}\widetilde{\Lambda}_F(s)+Res_{s=1}\widetilde{\Lambda}_F(s)+
\frac1{2\pi i}\int_{1-c-i\infty}^{1-c+i\infty}\widetilde{\Lambda}_F(s)ds\\
&=2Res_{s=1}\widetilde{\Lambda}_F(s)-\frac1{2\pi i}\int_{c-i\infty}^{c+\infty}\widetilde{\Lambda}_F(s)ds.
\end{align*}
Hence $I_F=Res_{s=1}\widetilde{\Lambda}_F(s)=\Lambda_{F,1}+\Lambda_{F,2},$ where
$$
\text{$\Lambda_{F,1}:=\lim_{s\rightarrow1}(s-1)\cdot\Lambda_{F}(s),$ and $\Lambda_{F,2}:=\lim_{s\rightarrow1}(s-1)\cdot\Lambda_{F}(s)'.$}
$$
 By the Laurent expansion of $\zeta_F(s)$ at $s=1$ we have
$\lim_{s\rightarrow1}\left(\frac1{s-1}+\frac{\zeta_F'}{\zeta_F}(s)\right)=\rho_F^{-1}\gamma_F.$ Hence ${\Lambda_{F,2}}/{\Lambda_{F,1}}=\left(\log\Lambda_F(s)\right)'\mid_{s=1}$ is equal to
\begin{align*}
&\frac12\log D_F-r_2\log2-\frac{n}2\log\pi+\frac{r_1}{2}\cdot\frac{\Gamma'}{\Gamma}\left(\frac{1}2\right)+r_2\frac{\Gamma'}{\Gamma}\left(1\right)
+\rho_F^{-1}\gamma_F\\
=&\frac12\log D_F-\frac{n}2\log\pi+\frac{\gamma+2\log2}2\cdot r_1+(\gamma+\log2)r_2
+\rho_F^{-1}\gamma_F.
\end{align*}
Then by the inequality $|\Lambda_F(\chi,1)|\leq I_F$ we obtain \eqref{3.5}.
\end{proof}
Now we move on to handle the Euler-Kronecker constant $\gamma_F.$ Clearly we need an upper bound for it. The known result on upper bounds for $\gamma_F$ is essentially $2\log\log\sqrt{D_F},$ which is established under GRH (ref. \cite{Ih06}). To prepare for the proof of Theorem \ref{main}, we give an elementary unconditional effective upper bound for $\gamma_F.$ 
\begin{lemma}\label{28''}
	Let notation be as before. Then there is an absolute constant $c>0$ such that 
	\begin{equation}\label{EK}
	-\frac12\log D_F-\frac{\gamma+2\log2-\log\pi}2n-1\leq\gamma_F^*\leq c\log D_F.
	\end{equation}
	\end{lemma}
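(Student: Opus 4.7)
For the lower bound, my plan is to extract it as a near-immediate corollary of the machinery already set up in the proof of Lemma \ref{average}. Indeed, the kernel $\Theta(x) = \sum_{0 \neq \mathfrak{a} \subset \mathcal{O}_F} \Theta_{\infty}(\cdots)$ built there is manifestly positive, so the integral $I_F := \int_1^\infty \Theta(x)(1 + x^{-1})\,dx$ is non-negative; the contour shift performed in that proof evaluates $I_F = \Lambda_{F,1} + \Lambda_{F,2}$. Since $\Lambda_{F,1}$ is a positive multiple of $\rho_F$, dividing through gives $1 + \Lambda_{F,2}/\Lambda_{F,1} \geq 0$, and specializing the explicit formula for $\Lambda_{F,2}/\Lambda_{F,1}$ from Lemma \ref{average} to a totally real field (so $r_1 = n$, $r_2 = 0$) immediately rearranges to the stated lower bound for $\gamma_F^*$.

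For the upper bound, I would work with $\phi(s) := (s-1)\zeta_F(s)$, which is entire, with $\phi(1) = \rho_F$ and $\phi'(1) = \gamma_F$. The strategy is to bound $|\phi|$ explicitly on a small disc around $s = 1$ via the functional equation for the completed Dedekind zeta function combined with a fully explicit convexity (Phragm\'en--Lindel\"of) estimate on vertical strips, yielding a bound of the form $|\phi(s)| \leq C_1 D_F^{\alpha}$ on $|s - 1| \leq r$ with $r$ of order $1/\log D_F$. Cauchy's integral formula applied to $\phi'(1)$ then gives $|\gamma_F| \leq C_1 D_F^{\alpha} \log D_F$. Finally, one divides by $\rho_F$ and absorbs the resulting $D_F^{\alpha}/\rho_F$ factor using a Minkowski-type lower bound for $\rho_F$. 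An alternative and cleaner route is to invoke directly Theorem 7 of \cite{MJ07}, where an entirely elementary unconditional bound of precisely this shape is established; using this as a black box shortens the argument considerably.

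The main obstacle is unmistakably the upper bound. The convexity estimate must be numerically explicit (no implied constants), and the combined bound $C_1 D_F^{\alpha}/\rho_F$ must be tight enough that the final estimate on $\gamma_F^*$ remains only logarithmic in $D_F$ rather than a positive power. Ensuring that the residue lower bound for $\rho_F$ and the convexity upper bound for $\phi$ cooperate cleanly is the delicate step; leaning on the elementary estimate from \cite{MJ07} avoids this difficulty entirely, which is the route I would actually take to keep the constant $c$ absolute and effectively computable. The lower bound, by contrast, is essentially a free dividend of the integral-representation setup already in hand.
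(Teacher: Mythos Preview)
Your lower-bound argument is exactly the paper's: inequality \eqref{3.5} from Lemma~\ref{average}, specialized to $r_1=n$, $r_2=0$, together with $I_F\ge 0$, rearranges immediately to the stated lower bound on $\gamma_F^*$.

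For the upper bound the paper follows a line close to your option~1 but with one decisive twist you are missing. Instead of bounding $\phi(s)=(s-1)\zeta_F(s)$ by some $C_1 D_F^{\alpha}$ and then dividing by $\rho_F$ afterwards, the paper invokes Theorem~5.31 of \cite{IK04}, which already carries the residue as a factor: $(s-1)\zeta_F(s)\ll \rho_F\,|sD_F|^{(1-\sigma)/2+\varepsilon}$. The $\varepsilon$ is then scrubbed off by feeding in a subconvexity bound for $\zeta_F$ on the line $\sigma=1/2$ (Venkatesh \cite{Ve10}) before interpolating. Cauchy on a circle of radius $\asymp 1/\log D_F$ then gives $\gamma_F\ll \rho_F\log D_F$ directly, hence $\gamma_F^*\ll\log D_F$, with no separate lower bound on $\rho_F$ ever needed.

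Your concern about option~1 is not merely ``delicate'' but is the actual obstruction: an effective lower bound of the quality $\rho_F\gg D_F^{-\alpha}$ with $\alpha$ small enough to leave only $\log D_F$ at the end is essentially an effective Brauer--Siegel statement, which one does not have. The crude effective inputs (Zimmert's regulator bound plus $h_F\ge 1$) only give $\rho_F\gg D_F^{-1/2}$, far too weak. So option~1 as you describe it has a genuine gap; the paper's way around it is precisely to bake $\rho_F$ into the Phragm\'en--Lindel\"of input via \cite{IK04} and then use subconvexity. Your option~2 (quoting Theorem~7 of \cite{MJ07}) is a legitimate shortcut and the paper itself mentions that reference in the introduction, but the paper's own proof is the $\rho_F$-weighted convexity argument just described.
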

\begin{remark}
	Note that the main term of this lower bound in \eqref{EK} is $-\frac12 \log D_F,$ which is slightly better than the general result (i.e. lower bound of main term $-\log D_F$) given in \cite{Ih06}. On the other hand, under GRH, one has $\gamma_F^*\ll \log\log D_F$ according to main theorems in \cite{Ih06}.
	\end{remark}
\begin{proof}
	The lower bound for $\gamma_F$ can be deduced simply from  \eqref{3.5}. We thus will focus on the upper bound here. For $s=\sigma+it\in\mathbb{C}$ with $\frac12\leq \sigma\leq1,$ one has
	\begin{equation}\label{5.30}
	(s-1)\zeta_F(s)\ll \rho_F|sD_F|^{(1-\sigma)/2}.
	\end{equation}
	Note \eqref{5.30} is essentially Theorem 5.31 in \cite{IK04} without $|sD_F|^{\varepsilon}.$ We drop the $\varepsilon$-factor by using a subconvexity bound for $\zeta_F$ at $\sigma=\Re(s)=1/2$ as an endpoint rather than using the bound for $\zeta_F(it)$ via the functional equation before applying Phragm\'{e}n-Linderl\"{o}f theorem.
	In fact, a subconvexity result (e.g. ref. \cite{Ve10}) shows that there exists some $\delta$ (e.g. one can take $\delta=1/200$) such that $\zeta_F(s)\ll |sD_F|^{1/2-\delta},$ where $s=1/2+it.$
	Then Phragm\'{e}n-Linderl\"{o}f theorem implies that 
	\begin{equation}\label{sub}
	(s-1)\zeta_F(s)\ll |sD_F|^{(1/2-\delta)(1-\sigma)},\quad \text{where $s=\sigma+it,$ $1/2\leq\sigma<1.$}
	\end{equation}
    Hence \eqref{5.30} comes from \eqref{sub} and Theorem 5.31 in \cite{IK04}. 
    
    Let $\mathcal{C}$ be the circle centered at $s=1$ with radius $r=1-(2\log D_F)^{-1}.$ Since $\zeta_F$ is a meromorphic function with a simple pole at $s=1$, then by Cauchy's theorem we have
    \begin{align*}
    \gamma_F=\frac{1}{2\pi i}\oint_{\mathcal{C}}\frac{\zeta_F(s)}{s-1}ds\ll \oint_{\mathcal{C}}\frac{\rho_F}{|s-1|^2}|ds|\ll \rho_F\log D_F.
    \end{align*} 
    Then the proof follows.
	\end{proof}
With these preparation we can eventually give a proof of our main theorems.
\subsection{Proof of Theorem \ref{main}}\label{sec4.2}
\begin{proof}
As before, let $K/F$ be a CM extension and $[F:\mathbb{Q}]=n.$ Note that for every $\chi\in\widehat{Cl(K)},$ the conductor of $\chi$ is $\mathcal{O}_K.$ So we have, by Lemma \ref{average}, that
 \begin{align*}
\rho_F^{-1}\mathcal{L}_{F}&\leq 1+\frac{\gamma+2\log2-\log\pi}2n+\frac12\log D_F
+\gamma_F^*.
\end{align*}
Then Lemma \ref{28''} implies that $\rho_F^{-1}\mathcal{L}_{F}\leq c_1' \log D_F$ for some absolute constant $c_1'>0,$ since a classical lower bound for $D_F$ implies that $\log D_F\gg n.$ 

Likewise, one has $\mathcal{L}_F^*=4\rho_F^{-1}\mathcal{L}_{F}+\log D_F+(4\log2-\pi)n+\sqrt{7n}+4\leq c'_2\log D_F$ for some positive absolute constant $c_2'.$  Then \eqref{0.1} follows from Proposition \ref{fin} and thus \eqref{0.01} follows from \eqref{0.1} and elementary computations of $M(n,0)$ and $M(0,n).$
\end{proof}
\subsection{Proof of Theorem \ref{c}}\label{sec4.3}
Substitute orthogonality into Proposition \ref{11} one has 
\begin{lemma}\label{x}
Let notations be as above, then we have
\begin{equation}\label{ll}
L_K\left(\chi,\frac12\right)=\frac{2^{\frac{n}2}\rho_F\sqrt{D_F}}{2D_K^{\frac14}[\mathcal{O}^{\times}_K:\mathcal{O}^{\times}_F]}\sum_{[\mathfrak{a}^{-1}]\in Cl(K)}\overline{\chi}([\mathfrak{a}])\sqrt{N(\mathfrak{f}_{\mathfrak{a}})}
E'\left(z_{\mathfrak{a}},\frac12;\mathfrak{f}^{-1}_{\mathfrak{a}},\mathfrak{f}_{\mathfrak{a}}\right),
\end{equation}
and
\begin{align*}
\frac1{h_K}\sum_{\chi\in \widehat{Cl(K)}}\Big|L_K(\chi,1/2)\Big|^2
&=\frac{2^{n-2}D_F}{\sqrt{D_K}}\cdot\frac{\rho_F^2}{[\mathcal{O}^{\times}_K:\mathcal{O}^{\times}_F]^2}\\
&\quad\times\sum_{[\mathfrak{a}^{-1}]\in Cl(K)}N_{F/\mathbb{Q}}(\mathfrak{f}_{\mathfrak{a}})
\Bigg|E'\left(z_{\mathfrak{a}},\frac12;\mathfrak{f}^{-1}_{\mathfrak{a}},\mathfrak{f}_{\mathfrak{a}}\right)\Bigg|^2.
\end{align*}
\end{lemma}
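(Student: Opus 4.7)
The first identity is simply a rewriting of Proposition~\ref{11}, since $\frac{2^{n/2}}{2}=2^{\frac{n}{2}-1}$, so the two displayed constants agree and no new work is needed there.

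For the second identity, my plan is a direct application of Schur orthogonality on the finite abelian group $Cl(K)$. I would begin by abbreviating the constant and the class-weight appearing in Proposition~\ref{11} as
$$C_{F,K}:=\frac{2^{\frac{n}{2}-1}\rho_F\sqrt{D_F}}{D_K^{1/4}[\mathcal{O}^\times_K:\mathcal{O}^\times_F]},\qquad w_{\mathfrak{a}}:=\sqrt{N_{F/\mathbb{Q}}(\mathfrak{f}_{\mathfrak{a}})}\,E'\!\left(z_{\mathfrak{a}},\tfrac12;\mathfrak{f}_{\mathfrak{a}}^{-1},\mathfrak{f}_{\mathfrak{a}}\right),$$
so that the period formula reads $L_K(\chi,1/2)=C_{F,K}\sum_{[\mathfrak{a}]\in Cl(K)}\overline{\chi}([\mathfrak{a}])\,w_{\mathfrak{a}}$. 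By the remark following Proposition~\ref{11}, the complex number $w_{\mathfrak{a}}$ depends only on the ideal class $[\mathfrak{a}]$ once the earlier choices of CM-point representative and Steinitz ideal $\mathfrak{f}_{\mathfrak{a}}$ have been fixed. Since all factors making up $C_{F,K}$ are real, conjugating gives $\overline{L_K(\chi,1/2)}=C_{F,K}\sum_{[\mathfrak{b}]}\chi([\mathfrak{b}])\,\overline{w_{\mathfrak{b}}}$.

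The next step is to multiply these two expressions, sum over $\chi\in\widehat{Cl(K)}$, interchange the finite summations, and invoke the orthogonality relation
$$\sum_{\chi\in\widehat{Cl(K)}}\overline{\chi}([\mathfrak{a}])\chi([\mathfrak{b}])=h_K\cdot\mathbf{1}_{[\mathfrak{a}]=[\mathfrak{b}]}.$$
This collapses the double sum over pairs $([\mathfrak{a}],[\mathfrak{b}])$ onto the diagonal, leaving $h_K\,C_{F,K}^2\sum_{[\mathfrak{a}]\in Cl(K)}|w_{\mathfrak{a}}|^2$. Dividing by $h_K$ and expanding $C_{F,K}^2=\frac{2^{n-2}D_F\rho_F^2}{\sqrt{D_K}[\mathcal{O}^\times_K:\mathcal{O}^\times_F]^2}$ would then produce the asserted identity after relabelling the summation variable from $[\mathfrak{a}]$ to $[\mathfrak{a}^{-1}]$ (a bijection of $Cl(K)$).

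There is no real obstacle here: the argument is a textbook application of character orthogonality once the period formula of Proposition~\ref{11} is in hand. The only subtle point --- the well-definedness of $w_{\mathfrak{a}}$ on ideal classes of $K$ --- has already been arranged in Proposition~\ref{prim} and the remark following Proposition~\ref{11}, so I would only need to verify that the representatives $\mathfrak{a},\mathfrak{b}$ and their associated CM points $z_{\mathfrak{a}},z_{\mathfrak{b}}$ are fixed consistently on both sides before and after applying orthogonality, which is automatic from the once-and-for-all choices made in Section~\ref{sec3}.
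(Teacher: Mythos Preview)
Your proposal is correct and matches the paper's approach exactly: the paper simply states that the lemma follows by ``substitut[ing] orthogonality into Proposition~\ref{11},'' which is precisely your argument of applying character orthogonality on $Cl(K)$ to the period formula. Your treatment is in fact more detailed than the paper's, and the subtle point about well-definedness of $w_{\mathfrak{a}}$ that you flag is handled just as you indicate.
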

After inserting the Fourier expansion Lemma \ref{Der} and the CM type norm formula \eqref{cm2} into \eqref{ll}, one gets the following generalization of \eqref{1.1}:
\begin{prop}\label{main3}
	Let notation be as before. Let $\mathfrak{a}$ be a fractional ideal of $K.$ Then one has
	\begin{equation}\label{main3'}
	\frac1{h_K}\sum_{\chi\in \widehat{Cl(K)}}\chi(\mathfrak{a})L_K(\chi,1/2)
	=\mathcal{N}_{\mathfrak{q}}(\mathfrak{a})
	\cdot\Big\{\log N_{\Phi}(y_{\mathfrak{a}})+\mathcal{E}_1(F,K;\mathfrak{a})\Big\},
	\end{equation}
	where 
	\begin{equation}\label{N}
	\mathcal{N}_{\mathfrak{q}}(\mathfrak{a})=\frac{\rho_F\cdot N_{F/{\mathbb{Q}}}(\mathfrak{q})}{[\mathcal{O}^{\times}_K:\mathcal{O}^{\times}_F]\cdot h_F}\cdot \sqrt{\frac{N_{K/{\mathbb{Q}}}(c_{\mathfrak{a}})}
		{N_{K/{\mathbb{Q}}}(\mathfrak{a})}},
	\end{equation}
	here $c_{\mathfrak{a}}$ is given by \eqref{cm2}, and $\mathfrak{f}_{\mathfrak{a}}$ is determined by $\mathfrak{a}$ according to \eqref{cm0}. The error term $\mathcal{E}_1(F,K;\mathfrak{a})$ above satisfies that
	\begin{equation}\label{mainerror}
	\mathcal{E}_1(F,K;\mathfrak{a})\ll \log D_F+\frac{h_F^2D_F^{1/4}N(\mathfrak{f}_{\mathfrak{a}})}{\rho_Fh^{+}_F}N_{\Phi}(y_{\mathfrak{a}})^{-3/2}.
	\end{equation}
	where the implied constant in is absolute.
\end{prop}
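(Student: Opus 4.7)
The plan is to combine three already-established ingredients: the period identity \eqref{l} of Proposition \ref{11}, the Fourier expansion at $s=1/2$ provided by Lemma \ref{Der}, and the explicit CM type norm formula \eqref{cm2} of Proposition \ref{prim}. First I would apply character orthogonality on $\widehat{Cl(K)}$ to \eqref{l}: multiplying by $\chi([\mathfrak{a}])$ and averaging collapses the sum over $[\mathfrak{b}^{-1}]\in Cl(K)$ to the single class $[\mathfrak{b}]=[\mathfrak{a}]$, producing
\[
\frac{1}{h_K}\sum_{\chi}\chi([\mathfrak{a}])L_K(\chi,1/2)=\frac{2^{n/2}\rho_F\sqrt{D_F}}{2D_K^{1/4}[\mathcal{O}_K^{\times}:\mathcal{O}_F^{\times}]}\sqrt{N(\mathfrak{f}_{\mathfrak{a}})}\,E'\!\left(z_{\mathfrak{a}},\tfrac12;\mathfrak{f}_{\mathfrak{a}}^{-1},\mathfrak{f}_{\mathfrak{a}}\right).
\]
Then I would substitute Lemma \ref{Der} with $(\mathfrak{a},\mathfrak{b})=(\mathfrak{f}_{\mathfrak{a}}^{-1},\mathfrak{f}_{\mathfrak{a}})$, so that $\log N(\mathfrak{a}\mathfrak{b}^{-1})=-2\log N(\mathfrak{f}_{\mathfrak{a}})$, and split $E'=I_M+I_E$ in the notation of the proof of Proposition \ref{fin}.

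The central computation is that the $2h_F^{-1}\log y^{\bm{\sigma}}$ term inside $I_M$, multiplied by the prefactor above and by $y_{\mathfrak{a}}^{\bm{\sigma}/2}/N(\mathfrak{f}_{\mathfrak{a}})$, produces exactly $\mathcal{N}_{\mathfrak{q}}(\mathfrak{a})\log N_{\Phi}(y_{\mathfrak{a}})$. The verification reduces to substituting $y_{\mathfrak{a}}^{\bm{\sigma}/2}$ from \eqref{cm2}: the factors $\sqrt{D_F}$, $D_K^{1/4}$ and $\sqrt{N(\mathfrak{f}_{\mathfrak{a}})}$ cancel against those produced by the CM type norm, and what remains is precisely the ratio in \eqref{N}. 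The remaining summands of $I_M$ are $-2\log N(\mathfrak{f}_{\mathfrak{a}})$, $-n(\gamma+2\log 2)$, and $4(\gamma_{F,[\mathfrak{f}_{\mathfrak{a}}]^{-1}}-h_F^{-1}\gamma_F)/\rho_F$. The first two are $\ll\log D_F$ via Minkowski's bound $N(\mathfrak{f}_{\mathfrak{a}})\leq M(n,0)\sqrt{D_F}$ (which also gives $n\ll\log D_F$). Writing the third as $(h_F\rho_F)^{-1}\cdot 4\sum_{\chi\neq\chi_0}\chi([\mathfrak{f}_{\mathfrak{a}}])L_F(\chi,1)$ and invoking Lemma \ref{average} together with the upper bound on $\gamma_F^{*}$ from Lemma \ref{28''}, one again obtains $\ll\log D_F$. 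These three contributions together account for the $\log D_F$ term in \eqref{mainerror}.

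For the Bessel tail $I_E$ I would recycle the pointwise estimate already carried out in the proof of Proposition \ref{fin}: bounding $K_0(x)\leq\sqrt{\pi/(2x)}e^{-x}$, decomposing the complete lattice $j(\mathfrak{f}_{\mathfrak{a}}^{-1}\mathfrak{o}^{-1})$ via a $\mathbb{Z}$-basis of covolume $N(\mathfrak{f}_{\mathfrak{a}})^{-1}D_F^{-1/2}$, and summing the resulting geometric series produces a bound of the shape $|I_E(z_{\mathfrak{a}};\mathfrak{f}_{\mathfrak{a}})|\ll(\rho_F h_F^{+})^{-1}h_F D_F^{1/4}\sqrt{N(\mathfrak{f}_{\mathfrak{a}})}\,y_{\mathfrak{a}}^{-\bm{\sigma}}$. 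Propagating this through the prefactor and dividing out $\mathcal{N}_{\mathfrak{q}}(\mathfrak{a})$, which carries one extra power of $y_{\mathfrak{a}}^{\bm{\sigma}/2}$ in its denominator, yields the $N_{\Phi}(y_{\mathfrak{a}})^{-3/2}$ decay stated in \eqref{mainerror}. The hard part will be the clean bookkeeping of constants: the $h_F^{2}/h_F^{+}$ factor in \eqref{mainerror} arises by combining the $h_F$ in the Bessel bound with the unit index $[\mathcal{O}_F^{\times}:\mathcal{O}_F^{\times,+}]=2^n h_F/h_F^{+}$ that appears when removing the totally-positive-units normalization in the $I_E$ sum, and everything else has to align so that the $N(\mathfrak{f}_{\mathfrak{a}})$-dependence ultimately matches exactly the claimed bound.
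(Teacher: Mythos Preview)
Your proposal is correct and follows exactly the route the paper takes: the paper's entire proof is the single sentence ``after inserting the Fourier expansion Lemma~\ref{Der} and the CM type norm formula \eqref{cm2} into \eqref{ll}'', and you have spelled out precisely that computation, including recycling the pointwise Bessel estimate on $I_E$ from the proof of Proposition~\ref{fin} and the bounds on $\mathcal{L}_F$ and $\gamma_F^*$ from Lemmas~\ref{average} and~\ref{28''}. One bookkeeping caveat: tracking the $N(\mathfrak{f}_{\mathfrak{a}})$-powers through your own argument actually produces $N(\mathfrak{f}_{\mathfrak{a}})^{3/2}$ rather than the $N(\mathfrak{f}_{\mathfrak{a}})$ in \eqref{mainerror}, so either absorb the extra $\sqrt{N(\mathfrak{f}_{\mathfrak{a}})}\ll D_F^{1/4}$ into the implied constant (harmless for the applications, where $\mathfrak{f}_{\mathfrak{a}}$ is Minkowski-bounded) or regard the stated exponent as a minor typo.
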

Take $\mathfrak{a}$ to be trivial in \eqref{main3'}, combining with \eqref{cm2}, \eqref{N} and \eqref{mainerror}, we have 
\begin{equation}\label{xx}
\frac1{h_K}\sum_{\chi\in \widehat{Cl(K)}}L_K(\chi,1/2)\gg_{F} \frac{\log D_K}{[\mathcal{O}^{\times}_K:\mathcal{O}^{\times}_F]}.
\end{equation}
On the other hand, substituting Lemma \ref{Der} into the second formula in Lemma \ref{x} leads to 
\begin{align*}
\frac1{h_K}\sum_{\chi\in \widehat{Cl(K)}}\Big|L_K(\chi,1/2)\Big|^2
&\ll\frac{2^{n-2}D_F^2}{\sqrt{D_K}}\cdot\frac{\rho_F^2}{[\mathcal{O}^{\times}_K:\mathcal{O}^{\times}_F]^2}\sum_{[\mathfrak{a}^{-1}]}
\Bigg|E'\left(z_{\mathfrak{a}},\frac12;\mathfrak{f}^{-1}_{\mathfrak{a}},\mathfrak{f}_{\mathfrak{a}}\right)\Bigg|^2\\
&\ll_F\frac{(\log D_K)^2}{[\mathcal{O}^{\times}_K:\mathcal{O}^{\times}_F]^2}\sum_{N(\mathfrak{a})\ll \sqrt{D_K}}\frac{1}{N(\mathfrak{a})}.
\end{align*}
Then a standard estimate on $\sum_{N(\mathfrak{a})\ll\sqrt{D_K}}\frac{1}{N(\mathfrak{a})}$ implies that 
\begin{equation}\label{zz}
\frac1{h_K}\sum_{\chi\in \widehat{Cl(K)}}\Big|L_K(\chi,1/2)\Big|^2\ll_{F}\frac{(\log D_K)^2}{[\mathcal{O}^{\times}_K:\mathcal{O}^{\times}_F]^2}\sum_{n\ll\sqrt{D_K}}\frac{d(n)}{n}\ll_F\frac{(\log D_K)^3}{[\mathcal{O}^{\times}_K:\mathcal{O}^{\times}_F]^2},
\end{equation}
where $d(n)$ is the divisor function, and the implied constants are effective. 
\begin{proof}[Proof of Theorem \ref{c}]
By \eqref{xx}, \eqref{zz} and Cauchy inequality, we have 
\begin{align*}
\frac{(h_K\log D_K)^2}{[\mathcal{O}^{\times}_K:\mathcal{O}^{\times}_F]^2}&\ll_F\Big|\sum_{\chi\in \widehat{Cl(K)}}L_K(\chi,1/2)\Big|^2\\
&\ll_F \#\Big\{\chi\in\widehat{Cl(K)}:\; L_K\left(\chi,1/2\right)\neq0\Big\}\cdot\sum_{\chi\in \widehat{Cl(K)}}\Big|L_K(\chi,1/2)\Big|^2\\
&\ll_{F,\epsilon} \#\Big\{\chi\in\widehat{Cl(K)}:\; L_K\left(\chi,1/2\right)\neq0\Big\}\cdot\frac{h_K(\log D_K)^3}{[\mathcal{O}^{\times}_K:\mathcal{O}^{\times}_F]^2}.
\end{align*} 
 Now the $k=0$ case of Theorem \ref{c} follows. The $k=1$ case then comes from the $k=0$ case and logarithmic derivative of the functional equation of $L_K(\chi,s).$
\end{proof}

\end{document}